\newcommand{\rem}[1]{}
\newcommand{\R}{\ensuremath{\mathbb{R}}}
\newcommand{\Q}{\ensuremath{\mathbb{Q}}}
\newcommand{\N}{\ensuremath{\mathbb{N}}}
\newcommand{\A}{\ensuremath{\mathbb{A}}}
\newcommand{\Fe}{\ensuremath{\mathcal{F}}}
\newcommand{\Ge}{\ensuremath{\mathcal{G}}}
\newcommand{\Ce}{\ensuremath{\mathcal{C}}}
\newcommand{\ind}{}
\newcommand{\Ordery}{\ensuremath{\textnormal{ord}_y}}
\newcommand{\Orderz}{\ensuremath{\textnormal{ord}_z}}
\newcommand{\ord}{order} 
\newcommand{\Degreey}{\ensuremath{\textnormal{deg}_y}}
\newcommand{\Degreez}{\ensuremath{\textnormal{deg}_z}}
\newcommand{\degreey}{degree}
\newcommand{\Adj}{\ensuremath{\textnormal{adj}}}
\newcommand{\Bonus}{\ensuremath{\textnormal{bonus}}}
\newcommand{\bonus}{bonus }
\newcommand{\Defect}{\ensuremath{\textnormal{defect}}}
\newcommand{\defect}{defect }
\newcommand{\const}{\ensuremath{\delta}}
\newcommand{\Indent}{\ensuremath{\textnormal{indent}}}
\newcommand{\indt}{indent }
\newcommand{\Updent}{\ensuremath{\textnormal{updent}}}
\newcommand{\updent}{updent }
\newcommand{\Dent}{\ensuremath{\textnormal{dent}}}
\newcommand{\dent}{dent }
\newcommand{\Slope}{\ensuremath{\textnormal{slope}}}
\newcommand{\slope}{slope}
\newcommand{\Parity}{\ensuremath{\textnormal{par}}}
\newcommand{\Dorder}{\ensuremath{\textnormal{shade}}}    % formerly d-ord
\newcommand{\dorder}{shade}    % formerly divided order
\newcommand{\Revdorder}{\ensuremath{\textnormal{complicacy}}}    % formerly revised d-ord
\newcommand{\revdorder}{complicacy }   % formerly revised divided order
\newcommand{\Height}{\ensuremath{\textnormal{height}}}
\newcommand{\height}{\ensuremath{\textnormal{height}}}
\newcommand{\Revheight}{\ensuremath{\textnormal{intricacy}}}  % formerly revised height
\newcommand{\revheight}{intricacy}   % formerly revised height
\newcommand{\I}{\ensuremath{\mathcal{I}}}
\newcommand{\ttop}{\ensuremath{\textnormal{top}}}
\newcommand{\weak}{{\ensuremath{\curlyvee}}}  
\newcommand{\JJ}{\ensuremath{\mathcal{J}}}
\newcommand{\Width}{\ensuremath{\textnormal{width}}}
\newcommand{\Order}{\ensuremath{\textnormal{ord}}}
\theoremstyle{plain}
\newtheorem{Theo}{Theorem}
\newtheorem{Prop}{Proposition}
\newtheorem{Lem}{Lemma}
\newtheorem{Cor}{Corollary}
\theoremstyle{remark}
\newtheorem{Bem}{Remark}
\newtheorem{abbildung}{Figure}
\def\@picture(#1,#2)(#3,#4){%
  \@picht #2\unitlength
  \setbox\@picbox\hbox to #1\unitlength\bgroup 
  \let\endpicture=\!latexendpicture
  \let\frame=\!latexframe
  \let\linethickness=\!latexlinethickness
  \let\multiput=\!latexmultiput
  \let\put=\!latexput
  \hskip -#3\unitlength \lower #4\unitlength \hbox\bgroup}
\def\PiC{P\kern-.12em\lower.5ex\hbox{I}\kern-.075emC}
\def\PiCTeX{\PiC\kern-.11em\TeX}
\def\!ifnextchar#1#2#3{%
  \let\!testchar=#1%
  \def\!first{#2}%
  \def\!second{#3}%
  \futurelet\!nextchar\!testnext}
\def\!testnext{%
  \ifx \!nextchar \!spacetoken
    \let\!next=\!skipspacetestagain
  \else
    \ifx \!nextchar \!testchar
      \let\!next=\!first
    \else
      \let\!next=\!second
    \fi
  \fi
  \!next}
\def\\{\!skipspacetestagain}
\def\\ {\futurelet\!nextchar\!testnext}
\def\\{\let\!spacetoken= } \\  %  ** set \spacetoken to a space token
\def\!tfor#1:=#2\do#3{%
  \edef\!fortemp{#2}%
  \ifx\!fortemp\!empty
    \else
    \!tforloop#2\!nil\!nil\!!#1{#3}%
  \fi}
\def\!tforloop#1#2\!!#3#4{%
  \def#3{#1}%
  \ifx #3\!nnil
    \let\!nextwhile=\!fornoop
  \else
    #4\relax
    \let\!nextwhile=\!tforloop
  \fi
  \!nextwhile#2\!!#3{#4}}
\def\!etfor#1:=#2\do#3{%
  \def\!!tfor{\!tfor#1:=}%
  \edef\!!!tfor{#2}%
  \expandafter\!!tfor\!!!tfor\do{#3}}
\def\!cfor#1:=#2\do#3{%
  \edef\!fortemp{#2}%
  \ifx\!fortemp\!empty
  \else
    \!cforloop#2,\!nil,\!nil\!!#1{#3}%
  \fi}
\def\!cforloop#1,#2\!!#3#4{%
  \def#3{#1}%
  \ifx #3\!nnil
    \let\!nextwhile=\!fornoop
  \else
    #4\relax
    \let\!nextwhile=\!cforloop
  \fi
  \!nextwhile#2\!!#3{#4}}
\def\!ecfor#1:=#2\do#3{%
  \def\!!cfor{\!cfor#1:=}%
  \edef\!!!cfor{#2}%
  \expandafter\!!cfor\!!!cfor\do{#3}}
\def\!empty{}
\def\!nnil{\!nil}
\def\!fornoop#1\!!#2#3{}
\def\!ifempty#1#2#3{%
  \edef\!emptyarg{#1}%
  \ifx\!emptyarg\!empty
    #2%
  \else
    #3%
  \fi}
\def\!getnext#1\from#2{%
  \expandafter\!gnext#2\!#1#2}%
\def\!gnext\\#1#2\!#3#4{%
  \def#3{#1}%
  \def#4{#2\\{#1}}%
  \ignorespaces}
\def\!getnextvalueof#1\from#2{%
  \expandafter\!gnextv#2\!#1#2}%
\def\!gnextv\\#1#2\!#3#4{%
  #3=#1%
  \def#4{#2\\{#1}}%
  \ignorespaces}
\def\!copylist#1\to#2{%
  \expandafter\!!copylist#1\!#2}
\def\!!copylist#1\!#2{%
  \def#2{#1}\ignorespaces}
\def\!wlet#1=#2{%
  \let#1=#2
  \wlog{\string#1=\string#2}}
\def\!listaddon#1#2{%
  \expandafter\!!listaddon#2\!{#1}#2}
\def\!!listaddon#1\!#2#3{%
  \def#3{#1\\#2}}
\def\!rightappend#1\withCS#2\to#3{\expandafter\!!rightappend#3\!#2{#1}#3}
\def\!!rightappend#1\!#2#3#4{\def#4{#1#2{#3}}}
\def\!leftappend#1\withCS#2\to#3{\expandafter\!!leftappend#3\!#2{#1}#3}
\def\!!leftappend#1\!#2#3#4{\def#4{#2{#3}#1}}
\def\!lop#1\to#2{\expandafter\!!lop#1\!#1#2}
\def\!!lop\\#1#2\!#3#4{\def#4{#1}\def#3{#2}}
\def\!loop#1\repeat{\def\!body{#1}\!iterate}
\def\!iterate{\!body\let\!next=\!iterate\else\let\!next=\relax\fi\!next}
\def\!!loop#1\repeat{\def\!!body{#1}\!!iterate}
\def\!!iterate{\!!body\let\!!next=\!!iterate\else\let\!!next=\relax\fi\!!next}
\def\!removept#1#2{\edef#2{\expandafter\!!removePT\the#1}}
{\catcode`p=12 \catcode`t=12 \gdef\!!removePT#1pt{#1}}
\def\placevalueinpts of <#1> in #2 {%
  \!removept{#1}{#2}}
\def\!mlap#1{\hbox to 0pt{\hss#1\hss}}
\def\!vmlap#1{\vbox to 0pt{\vss#1\vss}}
\def\!not#1{%
  #1\relax
    \!switchfalse
  \else
    \!switchtrue
  \fi
  \if!switch
  \ignorespaces}
\def\wlog#1{}
\newdimen\headingtoplotskip     %.A.................
\newdimen\linethickness         %.A..X....U........T
\newdimen\longticklength        %.A................T
\newdimen\plotsymbolspacing     %......D...L....Q...
\newdimen\shortticklength       %.A................T
\newdimen\stackleading          %.A..........P......
\newdimen\tickstovaluesleading  %.A................T
\newdimen\totalarclength        %......D...L....Q...
\newdimen\valuestolabelleading  %.A.................
\newbox\!boxA                   %.AW...............T
\newbox\!boxB                   %..W................
\newbox\!picbox                 %............P......
\newbox\!plotsymbol             %..........L..O.....
\newbox\!putobject              %............PO...S.
\newbox\!shadesymbol            %.................S.
\newdimen\!Xleft                %............P......
\newdimen\!Xright               %............P......
\newdimen\!Xsave                %.A................T
\newdimen\!Ybot                 %............P......
\newdimen\!Ysave                %.A................T
\newdimen\!Ytop                 %............P......
\newdimen\!angle                %........E..........
\newdimen\!arclength            %..W......UL....Q...
\newdimen\!areabloc             %.A........L........
\newdimen\!arealloc             %.A........L........
\newdimen\!arearloc             %.A........L........
\newdimen\!areatloc             %.A........L........
\newdimen\!bshrinkage           %.................S.
\newdimen\!checkbot             %..........L........
\newdimen\!checkleft            %..........L........
\newdimen\!checkright           %..........L........
\newdimen\!checktop             %..........L........
\newdimen\!dimenA               %.AW.X.DVEUL..OYQRST
\newdimen\!dimenB               %....X.DVEU...O.QRS.
\newdimen\!dimenC               %..W.X.DVEU......RS.
\newdimen\!dimenD               %..W.X.DVEU....Y.RS.
\newdimen\!dimenE               %..W........G..YQ.S.
\newdimen\!dimenF               %...........G..YQ.S.
\newdimen\!dimenG               %...........G..YQ.S.
\newdimen\!dimenH               %...........G..Y..S.
\newdimen\!dimenI               %...BX.........Y....
\newdimen\!distacross           %..........L....Q...
\newdimen\!downlength           %..........L........
\newdimen\!dp                   %.A..X.......P....S.
\newdimen\!dshade               %.................S.
\newdimen\!dxpos                %..W......U..P....S.
\newdimen\!dxprime              %...............Q...
\newdimen\!dypos                %..WB.....U..P......
\newdimen\!dyprime              %...............Q...
\newdimen\!ht                   %.A..X.......P....S.
\newdimen\!leaderlength         %......D..U.........
\newdimen\!lshrinkage           %.................S.
\newdimen\!midarclength         %...............Q...
\newdimen\!offset               %.A................T
\newdimen\!plotheadingoffset    %.A.................
\newdimen\!plotsymbolxshift     %..........L..O.....
\newdimen\!plotsymbolyshift     %..........L..O.....
\newdimen\!plotxorigin          %..........L..O.....
\newdimen\!plotyorigin          %..........L..O.....
\newdimen\!rootten              %...........G.......
\newdimen\!rshrinkage           %.................S.
\newdimen\!shadesymbolxshift    %.................S.
\newdimen\!shadesymbolyshift    %.................S.
\newdimen\!tenAa                %...........G.......
\newdimen\!tenAc                %...........G.......
\newdimen\!tenAe                %...........G.......
\newdimen\!tshrinkage           %.................S.
\newdimen\!uplength             %..........L........
\newdimen\!wd                   %....X.......P....S.
\newdimen\!wmax                 %...............Q...
\newdimen\!wmin                 %...............Q...
\newdimen\!xB                   %...............Q...
\newdimen\!xC                   %...............Q...
\newdimen\!xE                   %..W.....E.L....Q.S.
\newdimen\!xM                   %..W.....E......Q.S.
\newdimen\!xS                   %..W.....E.L....Q.S.
\newdimen\!xaxislength          %.A................T
\newdimen\!xdiff                %..........L........
\newdimen\!xleft                %............P......
\newdimen\!xloc                 %..WB.....U.......S.
\newdimen\!xorigin              %.A........L.P....S.
\newdimen\!xpivot               %................R..
\newdimen\!xpos                 %..........L.P..Q.ST
\newdimen\!xprime               %...............Q...
\newdimen\!xright               %............P......
\newdimen\!xshade               %.................S.
\newdimen\!xshift               %..W.........PO...S.
\newdimen\!xtemp                %............P......
\newdimen\!xunit                %.AWBX...EUL.P..QRS.
\newdimen\!xxE                  %........E..........
\newdimen\!xxM                  %........E..........
\newdimen\!xxS                  %........E..........
\newdimen\!xxloc                %..WB....EU.........
\newdimen\!yB                   %...............Q...
\newdimen\!yC                   %...............Q...
\newdimen\!yE                   %..W.....E.L....Q...
\newdimen\!yM                   %..W.....E......Q...
\newdimen\!yS                   %..W.....E.L....Q...
\newdimen\!yaxislength          %.A................T
\newdimen\!ybot                 %............P......
\newdimen\!ydiff                %..........L........
\newdimen\!yloc                 %..WB.....U.......S.
\newdimen\!yorigin              %.A........L.P....S.
\newdimen\!ypivot               %................R..
\newdimen\!ypos                 %..........L.P..Q.ST
\newdimen\!yprime               %...............Q...
\newdimen\!yshade               %.................S.
\newdimen\!yshift               %..W.........PO...S.
\newdimen\!ytemp                %............P......
\newdimen\!ytop                 %............P......
\newdimen\!yunit                %.AWBX...EUL.P..QRS.
\newdimen\!yyE                  %........E..........
\newdimen\!yyM                  %........E..........
\newdimen\!yyS                  %........E..........
\newdimen\!yyloc                %..WB....EU.........
\newdimen\!zpt                  %.AWBX.DVEULGP.YQ.ST
\newif\if!axisvisible           %.A.................
\newif\if!gridlinestoo          %..................T
\newif\if!keepPO                %...................
\newif\if!placeaxislabel        %.A.................
\newif\if!switch                %H..................
\newif\if!xswitch               %.A................T
\newtoks\!axisLaBeL             %.A.................
\newtoks\!keywordtoks           %.A.................
\newwrite\!replotfile           %.............O.....
\def\!cosrotationangle{1}      %................R..
\def\!sinrotationangle{0}      %................R..
\def\!xpivotcoord{0}           %................R..
\def\!xref{0}                  %............P......
\def\!xshadesave{0}            %.................S.
\def\!ypivotcoord{0}           %................R..
\def\!yref{0}                  %............P......
\def\!yshadesave{0}            %.................S.
\def\!zero{0}                  %..................T
\let\wlog=\!!!wlog
\def\normalgraphs{%
  \longticklength=.4\baselineskip
  \shortticklength=.25\baselineskip
  \tickstovaluesleading=.25\baselineskip
  \valuestolabelleading=.8\baselineskip
  \linethickness=.4pt
  \stackleading=.17\baselineskip
  \headingtoplotskip=1.5\baselineskip
  \visibleaxes
  \ticksout
  \nogridlines
  \unloggedticks}
\def\setplotarea x from #1 to #2, y from #3 to #4 {%
  \!arealloc=\!M{#1}\!xunit \advance \!arealloc -\!xorigin
  \!areabloc=\!M{#3}\!yunit \advance \!areabloc -\!yorigin
  \!arearloc=\!M{#2}\!xunit \advance \!arearloc -\!xorigin
  \!areatloc=\!M{#4}\!yunit \advance \!areatloc -\!yorigin
  \!initinboundscheck
  \!xaxislength=\!arearloc  \advance\!xaxislength -\!arealloc
  \!yaxislength=\!areatloc  \advance\!yaxislength -\!areabloc
  \!plotheadingoffset=\!zpt
  \!dimenput {{\setbox0=\hbox{}\wd0=\!xaxislength\ht0=\!yaxislength\box0}}
     [bl] (\!arealloc,\!areabloc)}
\def\visibleaxes{%
  \def\!axisvisibility{\!axisvisibletrue}}
\def\!fixkeyword#1{%
  \errhelp=\!keywordhelp
  \errmessage{Unrecognized keyword `#1': \the\!keywordtoks{NEW KEYWORD}'}}
\def\fixkeyword#1{%
  \!nextkeyword#1 }
\def\axis {%
  \def\!nextkeyword##1 {%
    \expandafter\ifx\csname !axis##1\endcsname \relax
      \def\!next{\!fixkeyword{##1}}%
    \else
      \def\!next{\csname !axis##1\endcsname}%
    \fi
    \!next}%
  \!offset=\!zpt
  \!axisvisibility
  \!placeaxislabelfalse
  \!nextkeyword}
\def\!axisbottom{%
  \!axisylevel=\!areabloc
  \def\!tickxsign{0}%
  \def\!tickysign{-}%
  \def\!axissetup{\!axisxsetup}%
  \def\!axislabeltbrl{t}%
  \!nextkeyword}
\def\!axistop{%
  \!axisylevel=\!areatloc
  \def\!tickxsign{0}%
  \def\!tickysign{+}%
  \def\!axissetup{\!axisxsetup}%
  \def\!axislabeltbrl{b}%
  \!nextkeyword}
\def\!axisleft{%
  \!axisxlevel=\!arealloc
  \def\!tickxsign{-}%
  \def\!tickysign{0}%
  \def\!axissetup{\!axisysetup}%
  \def\!axislabeltbrl{r}%
  \!nextkeyword}
\def\!axisright{%
  \!axisxlevel=\!arearloc
  \def\!tickxsign{+}%
  \def\!tickysign{0}%
  \def\!axissetup{\!axisysetup}%
  \def\!axislabeltbrl{l}%
  \!nextkeyword}
\def\!axisshiftedto#1=#2 {%
  \if 0\!tickxsign
    \!axisylevel=\!M{#2}\!yunit
    \advance\!axisylevel -\!yorigin
  \else
    \!axisxlevel=\!M{#2}\!xunit
    \advance\!axisxlevel -\!xorigin
  \fi
  \!nextkeyword}
\def\!axisvisible{%
  \!axisvisibletrue
  \!nextkeyword}
\def\!axisinvisible{%
  \!axisvisiblefalse
  \!nextkeyword}
\def\!axislabel#1 {%
  \!axisLaBeL={#1}%
  \!placeaxislabeltrue
  \!nextkeyword}
\def\csname !axis/\endcsname{%
  \!axissetup % This could done already by "ticks"; if so, now \relax
  \if!placeaxislabel
    \!placeaxislabel
  \fi
  \if +\!tickysign %                 ** (A "top" axis)
    \!dimenA=\!axisylevel
    \advance\!dimenA \!offset %      ** dimA = top of the axis structure
    \advance\!dimenA -\!areatloc %   ** dimA = excess over the plot area
    \ifdim \!dimenA>\!plotheadingoffset
      \!plotheadingoffset=\!dimenA % ** Greatest excess over the plot area
    \fi
  \fi}
\def\grid #1 #2 {%
  \!countA=#1\advance\!countA 1
  \axis bottom invisible ticks length <\!zpt> andacross quantity {\!countA} /
  \!countA=#2\advance\!countA 1
  \axis left   invisible ticks length <\!zpt> andacross quantity {\!countA} / }
\def\plotheading#1 {%
  \advance\!plotheadingoffset \headingtoplotskip
  \!dimenput {#1} [B] <.5\!xaxislength,\!plotheadingoffset>
    (\!arealloc,\!areatloc)}
\def\!axisxsetup{%
  \!axisxlevel=\!arealloc
  \!axisstart=\!arealloc
  \!axisend=\!arearloc
  \!axisLength=\!xaxislength
  \!!origin=\!xorigin
  \!!unit=\!xunit
  \!xswitchtrue
  \if!axisvisible
    \!makeaxis
  \fi}
\def\!axisysetup{%
  \!axisylevel=\!areabloc
  \!axisstart=\!areabloc
  \!axisend=\!areatloc
  \!axisLength=\!yaxislength
  \!!origin=\!yorigin
  \!!unit=\!yunit
  \!xswitchfalse
  \if!axisvisible
    \!makeaxis
  \fi}
\def\!makeaxis{%
  \setbox\!boxA=\hbox{% (Make a pseudo-y[x] tick for an x[y]-axis)
    \beginpicture
      \!setdimenmode
      \setcoordinatesystem point at {\!zpt} {\!zpt}
      \putrule from {\!zpt} {\!zpt} to
        {\!tickysign\!tickysign\!axisLength}
        {\!tickxsign\!tickxsign\!axisLength}
    \endpicturesave <\!Xsave,\!Ysave>}%
    \wd\!boxA=\!zpt
    \!placetick\!axisstart}
\def\!placeaxislabel{%
  \advance\!offset \valuestolabelleading
  \if!xswitch
    \!dimenput {\the\!axisLaBeL} [\!axislabeltbrl]
      <.5\!axisLength,\!tickysign\!offset> (\!axisxlevel,\!axisylevel)
    \advance\!offset \!dp  % ** advance offset by the "tallness"
    \advance\!offset \!ht  % ** of the label
  \else
    \!dimenput {\the\!axisLaBeL} [\!axislabeltbrl]
      <\!tickxsign\!offset,.5\!axisLength> (\!axisxlevel,\!axisylevel)
  \fi
  \!axisLaBeL={}}
\def\arrow <#1> [#2,#3]{%
  \!ifnextchar<{\!arrow{#1}{#2}{#3}}{\!arrow{#1}{#2}{#3}<\!zpt,\!zpt> }}
\def\!arrow#1#2#3<#4,#5> from #6 #7 to #8 #9 {%
%
% ** convert to dimensions
  \!xloc=\!M{#8}\!xunit
  \!yloc=\!M{#9}\!yunit
  \!dxpos=\!xloc  \!dimenA=\!M{#6}\!xunit  \advance \!dxpos -\!dimenA
  \!dypos=\!yloc  \!dimenA=\!M{#7}\!yunit  \advance \!dypos -\!dimenA
  \let\!MAH=\!M%                         ** save current c/d mode
  \!setdimenmode%                        ** go into dimension mode
  \!xshift=#4\relax  \!yshift=#5\relax%  ** pick up shift
  \!reverserotateonly\!xshift\!yshift%   ** back rotate shift
  \advance\!xshift\!xloc  \advance\!yshift\!yloc
%
% **  draw shaft of arrow
  \!xS=-\!dxpos  \advance\!xS\!xshift
  \!yS=-\!dypos  \advance\!yS\!yshift
  \!start (\!xS,\!yS)
  \!ljoin (\!xshift,\!yshift)
%
% ** find 32*cosine and 32*sine of angle of rotation
  \!Pythag\!dxpos\!dypos\!arclength
  \!divide\!dxpos\!arclength\!dxpos
  \!dxpos=32\!dxpos  \!removept\!dxpos\!!cos
  \!divide\!dypos\!arclength\!dypos
  \!dypos=32\!dypos  \!removept\!dypos\!!sin
%
% ** construct arrowhead
  \!halfhead{#1}{#2}{#3}%                ** draw half of arrow head
  \!halfhead{#1}{-#2}{-#3}%              ** draw other half
  \let\!M=\!MAH%                         ** restore old c/d mode
  \ignorespaces}
  \def\!halfhead#1#2#3{%
    \!dimenC=-#1%
    \divide \!dimenC 2 %                 ** half way back
    \!dimenD=#2\!dimenC%                 ** half the mid width
    \!rotate(\!dimenC,\!dimenD)by(\!!cos,\!!sin)to(\!xM,\!yM)
    \!dimenC=-#1%                        ** all the way back
    \!dimenD=#3\!dimenC
    \!dimenD=.5\!dimenD%                 ** half the full width
    \!rotate(\!dimenC,\!dimenD)by(\!!cos,\!!sin)to(\!xE,\!yE)
    \!start (\!xshift,\!yshift)
    \advance\!xM\!xshift  \advance\!yM\!yshift
    \advance\!xE\!xshift  \advance\!yE\!yshift
    \!qjoin (\!xM,\!yM) (\!xE,\!yE)
    \ignorespaces}
\def\betweenarrows #1#2 from #3 #4 to #5 #6 {%
  \!xloc=\!M{#3}\!xunit  \!xxloc=\!M{#5}\!xunit%
  \!yloc=\!M{#4}\!yunit  \!yyloc=\!M{#6}\!yunit%
  \!dxpos=\!xxloc  \advance\!dxpos by -\!xloc
  \!dypos=\!yyloc  \advance\!dypos by -\!yloc
  \advance\!xloc .5\!dxpos
  \advance\!yloc .5\!dypos
  \let\!MBA=\!M%           ** save current coord\dimen mode
  \!setdimenmode%          ** express locations in dimens
  \ifdim\!dypos=\!zpt
    \ifdim\!dxpos<\!zpt \!dxpos=-\!dxpos \fi
    \put {\!lrarrows{\!dxpos}{#1}}#2{} at {\!xloc} {\!yloc}
  \else
    \ifdim\!dxpos=\!zpt
      \ifdim\!dypos<\!zpt \!dypos=-\!zpt \fi
      \put {\!udarrows{\!dypos}{#1}}#2{} at {\!xloc} {\!yloc}
    \fi
  \fi
  \let\!M=\!MBA%           ** restore previous c/d mode
  \ignorespaces}
\def\!lrarrows#1#2{% #1=width, #2=text
  {\setbox\!boxA=\hbox{$\mkern-2mu\mathord-\mkern-2mu$}%
   \setbox\!boxB=\hbox{$\leftarrow$}\!dimenE=\ht\!boxB
   \setbox\!boxB=\hbox{}\ht\!boxB=2\!dimenE
   \hbox to #1{$\mathord\leftarrow\mkern-6mu
     \cleaders\copy\!boxA\hfil
     \mkern-6mu\mathord-$%
     \kern.4em $\vcenter{\box\!boxB}$$\vcenter{\hbox{#2}}$\kern.4em
     $\mathord-\mkern-6mu
     \cleaders\copy\!boxA\hfil
     \mkern-6mu\mathord\rightarrow$}}}
\def\!udarrows#1#2{% #1=width, #2=text
  {\setbox\!boxB=\hbox{#2}%
   \setbox\!boxA=\hbox to \wd\!boxB{\hss$\vert$\hss}%
   \!dimenE=\ht\!boxA \advance\!dimenE \dp\!boxA \divide\!dimenE 2
   \vbox to #1{\offinterlineskip
      \vskip .05556\!dimenE
      \hbox to \wd\!boxB{\hss$\mkern.4mu\uparrow$\hss}\vskip-\!dimenE
      \cleaders\copy\!boxA\vfil
      \vskip-\!dimenE\copy\!boxA
      \vskip\!dimenE\copy\!boxB\vskip.4em
      \copy\!boxA\vskip-\!dimenE
      \cleaders\copy\!boxA\vfil
      \vskip-\!dimenE \hbox to \wd\!boxB{\hss$\mkern.4mu\downarrow$\hss}
      \vskip .05556\!dimenE}}}
\def\putbar#1breadth <#2> from #3 #4 to #5 #6 {%
  \!xloc=\!M{#3}\!xunit  \!xxloc=\!M{#5}\!xunit%
  \!yloc=\!M{#4}\!yunit  \!yyloc=\!M{#6}\!yunit%
  \!dypos=\!yyloc  \advance\!dypos by -\!yloc
  \!dimenI=#2
  \ifdim \!dimenI=\!zpt %            ** If 0 breadth
    \putrule#1from {#3} {#4} to {#5} {#6} % ** Then draw line
  \else %                            ** Else, put in a rectangle
    \let\!MBar=\!M%                  ** save current c/d mode
    \!setdimenmode %                 ** go into dimension mode
    \divide\!dimenI 2
    \ifdim \!dypos=\!zpt
      \advance \!yloc -\!dimenI %    ** Equal y coordinates
      \advance \!yyloc \!dimenI
    \else
      \advance \!xloc -\!dimenI %    ** Equal x coordinates
      \advance \!xxloc \!dimenI
    \fi
    \putrectangle#1corners at {\!xloc} {\!yloc} and {\!xxloc} {\!yyloc}
    \let\!M=\!MBar %                 ** restore c/d mode
  \fi
  \ignorespaces}
\def\setbars#1breadth <#2> baseline at #3 = #4 {%
  \edef\!barshift{#1}%
  \edef\!barbreadth{#2}%
  \edef\!barorientation{#3}%
  \edef\!barbaseline{#4}%
  \def\!bardobaselabel{\!bardoendlabel}%
  \def\!bardoendlabel{\!barfinish}%
  \let\!drawcurve=\!barcurve
  \!setbars}
\def\!setbars{%
  \futurelet\!nextchar\!!setbars}
\def\!!setbars{%
  \if b\!nextchar
    \def\!!!setbars{\!setbarsbget}%
  \else
    \if e\!nextchar
      \def\!!!setbars{\!setbarseget}%
    \else
      \def\!!!setbars{\relax}%
    \fi
  \fi
  \!!!setbars}
\def\!setbarsbget baselabels (#1) {%
  \def\!barbaselabelorientation{#1}%
  \def\!bardobaselabel{\!!bardobaselabel}%
  \!setbars}
\def\!setbarseget endlabels (#1) {%
  \edef\!barendlabelorientation{#1}%
  \def\!bardoendlabel{\!!bardoendlabel}%
  \!setbars}
\def\!barcurve #1 #2 {%
  \if y\!barorientation
    \def\!basexarg{#1}%
    \def\!baseyarg{\!barbaseline}%
  \else
    \def\!basexarg{\!barbaseline}%
    \def\!baseyarg{#2}%
  \fi
  \expandafter\putbar\!barshift breadth <\!barbreadth> from {\!basexarg}
    {\!baseyarg} to {#1} {#2}
  \def\!endxarg{#1}%
  \def\!endyarg{#2}%
  \!bardobaselabel}
\def\!!bardobaselabel "#1" {%
  \put {#1}\!barbaselabelorientation{} at {\!basexarg} {\!baseyarg}
  \!bardoendlabel}
\def\!!bardoendlabel "#1" {%
  \put {#1}\!barendlabelorientation{} at {\!endxarg} {\!endyarg}
  \!barfinish}
\def\!barfinish{%
  \!ifnextchar/{\!finish}{\!barcurve}}
\def\putrectangle{%
  \!ifnextchar<{\!putrectangle}{\!putrectangle<\!zpt,\!zpt> }}
\def\!putrectangle<#1,#2> corners at #3 #4 and #5 #6 {%
%
% ** get locations
  \!xone=\!M{#3}\!xunit  \!xtwo=\!M{#5}\!xunit%
  \!yone=\!M{#4}\!yunit  \!ytwo=\!M{#6}\!yunit%
  \ifdim \!xtwo<\!xone
    \!dimenI=\!xone  \!xone=\!xtwo  \!xtwo=\!dimenI
  \fi
  \ifdim \!ytwo<\!yone
    \!dimenI=\!yone  \!yone=\!ytwo  \!ytwo=\!dimenI
  \fi
  \!dimenI=#1\relax  \advance\!xone\!dimenI  \advance\!xtwo\!dimenI
  \!dimenI=#2\relax  \advance\!yone\!dimenI  \advance\!ytwo\!dimenI
  \let\!MRect=\!M%                  ** save current coord/dimen mode
  \!setdimenmode
%
% ** shade rectangle if appropriate
  \!shaderectangle
%
% ** draw horizontal edges
  \!dimenI=.5\linethickness
  \advance \!xone  -\!dimenI%       ** adjust x-location to overlap corners
  \advance \!xtwo   \!dimenI%       ** ditto
  \putrule from {\!xone} {\!yone} to {\!xtwo} {\!yone}
  \putrule from {\!xone} {\!ytwo} to {\!xtwo} {\!ytwo}
%
% ** draw vertical edges
  \advance \!xone   \!dimenI%       ** restore original x-values
  \advance \!xtwo  -\!dimenI%
  \advance \!yone  -\!dimenI%       ** adjust y-location to overlap corners
  \advance \!ytwo   \!dimenI%       ** ditto
  \putrule from {\!xone} {\!yone} to {\!xone} {\!ytwo}
  \putrule from {\!xtwo} {\!yone} to {\!xtwo} {\!ytwo}
  \let\!M=\!MRect%                  ** restore coord/dimen mode
  \ignorespaces}
\def\shaderectanglesoff{%
  \def\!shaderectangle{}%
  \ignorespaces}
\def\!!shaderectangle{%
  \!dimenA=\!xtwo  \advance \!dimenA -\!xone
  \!dimenB=\!ytwo  \advance \!dimenB -\!yone
  \ifdim \!dimenA<\!dimenB
    \!startvshade (\!xone,\!yone,\!ytwo)
    \!lshade      (\!xtwo,\!yone,\!ytwo)
  \else
    \!starthshade (\!yone,\!xone,\!xtwo)
    \!lshade      (\!ytwo,\!xone,\!xtwo)
  \fi
  \ignorespaces}
\def\frame{%
  \!ifnextchar<{\!frame}{\!frame<\!zpt> }}
\long\def\!frame<#1> #2{%
  \beginpicture
    \setcoordinatesystem units <1pt,1pt> point at 0 0
    \put {#2} [Bl] at 0 0
    \!dimenA=#1\relax
    \!dimenB=\!wd \advance \!dimenB \!dimenA
    \!dimenC=\!ht \advance \!dimenC \!dimenA
    \!dimenD=\!dp \advance \!dimenD \!dimenA
    \let\!MFr=\!M
    \!setdimenmode
    \putrectangle corners at {-\!dimenA} {-\!dimenD} and {\!dimenB} {\!dimenC}
    \!setcoordmode
    \let\!M=\!MFr
  \endpicture
  \ignorespaces}
\def\rectangle <#1> <#2> {%
  \setbox0=\hbox{}\wd0=#1\ht0=#2\frame {\box0}}
\def\plot{%
  \!ifnextchar"{\!plotfromfile}{\!drawcurve}}
\def\!plotfromfile"#1"{%
  \expandafter\!drawcurve \input #1 /}
\def\setquadratic{%
  \let\!drawcurve=\!qcurve
  \let\!!Shade=\!!qShade
  \let\!!!Shade=\!!!qShade}
\def\setlinear{%
  \let\!drawcurve=\!lcurve
  \let\!!Shade=\!!lShade
  \let\!!!Shade=\!!!lShade}
\def\sethistograms{%
  \let\!drawcurve=\!hcurve}
\def\!qcurve #1 #2 {%
  \!start (#1,#2)
  \!Qjoin}
\def\!Qjoin#1 #2 #3 #4 {%
  \!qjoin (#1,#2) (#3,#4)             % \!qjoin  is defined in QUADRATIC
  \!ifnextchar/{\!finish}{\!Qjoin}}
\def\!lcurve #1 #2 {%
  \!start (#1,#2)
  \!Ljoin}
\def\!Ljoin#1 #2 {%
  \!ljoin (#1,#2)                    % \!ljoin  is defined in LINEAR
  \!ifnextchar/{\!finish}{\!Ljoin}}
\def\!finish/{\ignorespaces}
\def\!hcurve #1 #2 {%
  \edef\!hxS{#1}%
  \edef\!hyS{#2}%
  \!hjoin}
\def\!hjoin#1 #2 {%
  \putrectangle corners at {\!hxS} {\!hyS} and {#1} {#2}
  \edef\!hxS{#1}%
  \!ifnextchar/{\!finish}{\!hjoin}}
\def\vshade #1 #2 #3 {%
  \!startvshade (#1,#2,#3)
  \!Shadewhat}
\def\hshade #1 #2 #3 {%
  \!starthshade (#1,#2,#3)
  \!Shadewhat}
\def\!Shadewhat{%
  \futurelet\!nextchar\!Shade}
\def\!Shade{%
  \if <\!nextchar
    \def\!nextShade{\!!Shade}%
  \else
    \if /\!nextchar
      \def\!nextShade{\!finish}%
    \else
      \def\!nextShade{\!!!Shade}%
    \fi
  \fi
  \!nextShade}
\def\!!lShade<#1> #2 #3 #4 {%
  \!lshade <#1> (#2,#3,#4)                 % \!lshade is defined in SHADING
  \!Shadewhat}
\def\!!!lShade#1 #2 #3 {%
  \!lshade (#1,#2,#3)
  \!Shadewhat}
\def\!!qShade<#1> #2 #3 #4 #5 #6 #7 {%
  \!qshade <#1> (#2,#3,#4) (#5,#6,#7)      % \!qshade is defined in SHADING
  \!Shadewhat}
\def\!!!qShade#1 #2 #3 #4 #5 #6 {%
  \!qshade (#1,#2,#3) (#4,#5,#6)
  \!Shadewhat}
\def\setdashpattern <#1>{%
  \def\!Flist{}\def\!Blist{}\def\!UDlist{}%
  \!countA=0
  \!ecfor\!item:=#1\do{%
    \!dimenA=\!item\relax
    \expandafter\!rightappend\the\!dimenA\withCS{\\}\to\!UDlist%
    \advance\!countA  1
    \ifodd\!countA
      \expandafter\!rightappend\the\!dimenA\withCS{\!Rule}\to\!Flist%
      \expandafter\!leftappend\the\!dimenA\withCS{\!Rule}\to\!Blist%
    \else
      \expandafter\!rightappend\the\!dimenA\withCS{\!Skip}\to\!Flist%
      \expandafter\!leftappend\the\!dimenA\withCS{\!Skip}\to\!Blist%
    \fi}%
  \!leaderlength=\!zpt
  \def\!Rule##1{\advance\!leaderlength  ##1}%
  \def\!Skip##1{\advance\!leaderlength  ##1}%
  \!Flist%
  \ifdim\!leaderlength>\!zpt
  \else
    \def\!Flist{\!Skip{24in}}\def\!Blist{\!Skip{24in}}\ignorespaces
    \def\!UDlist{\\{\!zpt}\\{24in}}\ignorespaces
    \!leaderlength=24in
  \fi
  \!dashingon}
\def\!dashingon{%
  \def\!advancedashing{\!!advancedashing}%
  \def\!drawlinearsegment{\!lineardashed}%
  \def\!puthline{\!putdashedhline}%
  \def\!putvline{\!putdashedvline}%
  \ignorespaces}%
\def\!dashingoff{%
  \def\!advancedashing{\relax}%
  \def\!drawlinearsegment{\!linearsolid}%
  \def\!puthline{\!putsolidhline}%
  \def\!putvline{\!putsolidvline}%
  \ignorespaces}
\def\setdots{%
  \!ifnextchar<{\!setdots}{\!setdots<5pt>}}
\def\!setdots<#1>{%
  \!dimenB=#1\advance\!dimenB -\plotsymbolspacing
  \ifdim\!dimenB<\!zpt
    \!dimenB=\!zpt
  \fi
\setdashpattern <\plotsymbolspacing,\!dimenB>}
\def\setdotsnear <#1> for <#2>{%
  \!dimenB=#2\relax  \advance\!dimenB -.05pt
  \!dimenC=#1\relax  \!countA=\!dimenC
  \!dimenD=\!dimenB  \advance\!dimenD .5\!dimenC  \!countB=\!dimenD
  \divide \!countB  \!countA
  \ifnum 1>\!countB
    \!countB=1
  \fi
  \divide\!dimenB  \!countB
  \setdots <\!dimenB>}
\def\setdashes{%
  \!ifnextchar<{\!setdashes}{\!setdashes<5pt>}}
\def\!setdashes<#1>{\setdashpattern <#1,#1>}
\def\setdashesnear <#1> for <#2>{%
  \!dimenB=#2\relax
  \!dimenC=#1\relax  \!countA=\!dimenC
  \!dimenD=\!dimenB  \advance\!dimenD .5\!dimenC  \!countB=\!dimenD
  \divide \!countB  \!countA
  \ifodd \!countB
  \else
    \advance \!countB  1
  \fi
  \divide\!dimenB  \!countB
  \setdashes <\!dimenB>}
\def\setsolid{%
  \def\!Flist{\!Rule{24in}}\def\!Blist{\!Rule{24in}}%
  \def\!UDlist{\\{24in}\\{\!zpt}}%
  \!dashingoff}
\def\!divide#1#2#3{%
  \!dimenB=#1%                      **  dimB  holds current remainder (r)
  \!dimenC=#2%                      **  dimC  holds divisor (d)
  \!dimenD=\!dimenB%                **  dimD  holds quotient q=r/d for this
  \divide \!dimenD \!dimenC%        **    step, in units of scaled pts
  \!dimenA=\!dimenD%                **  dimA  eventually holds answer (a)
  \multiply\!dimenD \!dimenC%       **  r <-- r - dq
  \advance\!dimenB -\!dimenD%       **  First step complete. Have integer part
%                                   **  of a, and corresponding remainder.
  \!dimenD=\!dimenC%                **  Temporarily use dimD to hold |d|
    \ifdim\!dimenD<\!zpt \!dimenD=-\!dimenD
  \fi
  \ifdim\!dimenD<64pt%              **  Branch on the magnitude of |d|
    \!divstep[\!tfs]\!divstep[\!tfs]%
  \else
    \!!divide
  \fi
  #3=\!dimenA\ignorespaces}
\def\!!divide{%
  \ifdim\!dimenD<256pt
    \!divstep[64]\!divstep[32]\!divstep[32]%
  \else
    \!divstep[8]\!divstep[8]\!divstep[8]\!divstep[8]\!divstep[8]%
    \!dimenA=2\!dimenA
  \fi}
\def\!divstep[#1]{%                 **  #1 = "B"
  \!dimenB=#1\!dimenB%              **  r <-- B*r
  \!dimenD=\!dimenB%                **  dimD  holds quotient q=r/d for this
    \divide \!dimenD by \!dimenC%   **    step, in units of scaled pts
  \!dimenA=#1\!dimenA%              **  a <-- B*a + q
    \advance\!dimenA by \!dimenD%
  \multiply\!dimenD by \!dimenC%    **  r <-- r - dq
    \advance\!dimenB by -\!dimenD}
\def\Divide <#1> by <#2> forming <#3> {%
  \!divide{#1}{#2}{#3}}
\def\ellipticalarc axes ratio #1:#2 #3 degrees from #4 #5 center at #6 #7 {%
  \!angle=#3pt\relax%                    ** get angle
  \ifdim\!angle>\!zpt
    \def\!sign{}%                        ** counterclockwise
  \else
    \def\!sign{-}\!angle=-\!angle%       ** clockwise
  \fi
  \!xxloc=\!M{#6}\!xunit%                ** convert CENTER to dimension
  \!yyloc=\!M{#7}\!yunit
  \!xxS=\!M{#4}\!xunit%                  ** get STARTing point on rim of ellipse
  \!yyS=\!M{#5}\!yunit
  \advance\!xxS -\!xxloc%                ** make center of ellipse (0,0)
  \advance\!yyS -\!yyloc
  \!divide\!xxS{#1pt}\!xxS %             ** scale point on ellipse to point on
  \!divide\!yyS{#2pt}\!yyS %                 corresponding circle
  \let\!MC=\!M%                          ** save current c/d mode
  \!setdimenmode%                        ** go into dimension mode
  \!xS=#1\!xxS  \advance\!xS\!xxloc
  \!yS=#2\!yyS  \advance\!yS\!yyloc
  \!start (\!xS,\!yS)%
  \!loop\ifdim\!angle>14.9999pt%         ** draw in major portion of ellipse
    \!rotate(\!xxS,\!yyS)by(\!cos,\!sign\!sin)to(\!xxM,\!yyM)
    \!rotate(\!xxM,\!yyM)by(\!cos,\!sign\!sin)to(\!xxE,\!yyE)
    \!xM=#1\!xxM  \advance\!xM\!xxloc  \!yM=#2\!yyM  \advance\!yM\!yyloc
    \!xE=#1\!xxE  \advance\!xE\!xxloc  \!yE=#2\!yyE  \advance\!yE\!yyloc
    \!qjoin (\!xM,\!yM) (\!xE,\!yE)
    \!xxS=\!xxE  \!yyS=\!yyE
    \advance \!angle -15pt
  \repeat
  \ifdim\!angle>\!zpt%                   ** complete remaining arc, if any
    \!angle=100.53096\!angle%            ** convert angle to radians, divide
    \divide \!angle 360 %                **   by 2, and multiply by 32
    \!sinandcos\!angle\!!sin\!!cos%      ** get 32*sin & 32*cos
    \!rotate(\!xxS,\!yyS)by(\!!cos,\!sign\!!sin)to(\!xxM,\!yyM)
    \!rotate(\!xxM,\!yyM)by(\!!cos,\!sign\!!sin)to(\!xxE,\!yyE)
    \!xM=#1\!xxM  \advance\!xM\!xxloc  \!yM=#2\!yyM  \advance\!yM\!yyloc
    \!xE=#1\!xxE  \advance\!xE\!xxloc  \!yE=#2\!yyE  \advance\!yE\!yyloc
    \!qjoin (\!xM,\!yM) (\!xE,\!yE)
  \fi
  \let\!M=\!MC%                          ** restore c/d mode
  \ignorespaces}%                        **   if appropriate
\def\!rotate(#1,#2)by(#3,#4)to(#5,#6){%
  \!dimenA=#3#1\advance \!dimenA -#4#2%   ** Rcos(x+t)=Rcosx*cost - Rsinx*sint
  \!dimenB=#3#2\advance \!dimenB  #4#1%   ** Rsin(x+t)=Rsinx*cost + Rcosx*sint
  \divide \!dimenA 32  \divide \!dimenB 32
  #5=\!dimenA  #6=\!dimenB
  \ignorespaces}
\def\!sin{4.17684}%                       ** 32*sin(pi/24) (pi/24=7.5deg)
\def\!cos{31.72624}%                      ** 32*cos(pi/24)
\def\!sinandcos#1#2#3{%
 \!dimenD=#1%                **  angle is expressed in radians/32: 1pt = 1/32rad
 \!dimenA=\!dimenD%          **  dimA will eventually contain 32sin(angle)in pts
 \!dimenB=32pt%              **  dimB will eventually contain 32cos(angle)in pts
 \!removept\!dimenD\!value%  **  get value of 32*angle, without "pt"
 \!dimenC=\!dimenD%          **  holds 32*angle**i/i! in pts
 \!dimenC=\!value\!dimenC \divide\!dimenC by 64 %   ** now 32*angle**2/2
 \advance\!dimenB by -\!dimenC%                     ** 32-32*angle**2/2
 \!dimenC=\!value\!dimenC \divide\!dimenC by 96 %   ** now 32*angle**3/3!
 \advance\!dimenA by -\!dimenC%                     ** now 32*(angle-angle**3/6)
 \!dimenC=\!value\!dimenC \divide\!dimenC by 128 %  ** now 32*angle**4/4!
 \advance\!dimenB by \!dimenC%
 \!removept\!dimenA#2%                              ** set 32*sin(angle)
 \!removept\!dimenB#3%                              ** set 32*cos(angle)
 \ignorespaces}
\def\putrule#1from #2 #3 to #4 #5 {%
  \!xloc=\!M{#2}\!xunit  \!xxloc=\!M{#4}\!xunit%
  \!yloc=\!M{#3}\!yunit  \!yyloc=\!M{#5}\!yunit%
  \!dxpos=\!xxloc  \advance\!dxpos by -\!xloc
  \!dypos=\!yyloc  \advance\!dypos by -\!yloc
  \ifdim\!dypos=\!zpt
    \def\!!Line{\!puthline{#1}}\ignorespaces
  \else
    \ifdim\!dxpos=\!zpt
      \def\!!Line{\!putvline{#1}}\ignorespaces
    \else
       \def\!!Line{}
    \fi
  \fi
  \let\!ML=\!M%           ** save current coord\dimen mode
  \!setdimenmode%         ** express locations in dimens
  \!!Line%
  \let\!M=\!ML%           ** restore previous c/d mode
  \ignorespaces}
\def\!putsolidhline#1{%
  \ifdim\!dxpos>\!zpt
    \put{\!hline\!dxpos}#1[l] at {\!xloc} {\!yloc}
  \else
    \put{\!hline{-\!dxpos}}#1[l] at {\!xxloc} {\!yyloc}
  \fi
  \ignorespaces}
\def\!putsolidvline#1{%
  \ifdim\!dypos>\!zpt
    \put{\!vline\!dypos}#1[b] at {\!xloc} {\!yloc}
  \else
    \put{\!vline{-\!dypos}}#1[b] at {\!xxloc} {\!yyloc}
  \fi
  \ignorespaces}
\def\!hline#1{\hbox to #1{\leaders \hrule height\linethickness\hfill}}
\def\!vline#1{\vbox to #1{\leaders \vrule width\linethickness\vfill}}
\def\!putdashedhline#1{%
  \ifdim\!dxpos>\!zpt
    \!DLsetup\!Flist\!dxpos
    \put{\hbox to \!totalleaderlength{\!hleaders}\!hpartialpattern\!Rtrunc}
      #1[l] at {\!xloc} {\!yloc}
  \else
    \!DLsetup\!Blist{-\!dxpos}
    \put{\!hpartialpattern\!Ltrunc\hbox to \!totalleaderlength{\!hleaders}}
      #1[r] at {\!xloc} {\!yloc}
  \fi
  \ignorespaces}
\def\!putdashedvline#1{%
  \!dypos=-\!dypos%            ** vertical leaders go from top to bottom
  \ifdim\!dypos>\!zpt
    \!DLsetup\!Flist\!dypos
    \put{\vbox{\vbox to \!totalleaderlength{\!vleaders}
      \!vpartialpattern\!Rtrunc}}#1[t] at {\!xloc} {\!yloc}
  \else
    \!DLsetup\!Blist{-\!dypos}
    \put{\vbox{\!vpartialpattern\!Ltrunc
      \vbox to \!totalleaderlength{\!vleaders}}}#1[b] at {\!xloc} {\!yloc}
  \fi
  \ignorespaces}
\def\!DLsetup#1#2{%            ** Dashed-Line set up
  \let\!RSlist=#1%             ** set !Rule-Skip list
  \!countB=#2%                 ** convert rule length to integer (number of sps)
  \!countA=\!leaderlength%     ** ditto, leaderlength
  \divide\!countB by \!countA% ** number of complete leader units
  \!totalleaderlength=\!countB\!leaderlength
  \!Rresiduallength=#2%
  \advance \!Rresiduallength by -\!totalleaderlength%  \** excess length
  \!Lresiduallength=\!leaderlength
  \advance \!Lresiduallength by -\!Rresiduallength
  \ignorespaces}
\def\!hleaders{%
  \def\!Rule##1{\vrule height\linethickness width##1}%
  \def\!Skip##1{\hskip##1}%
  \leaders\hbox{\!RSlist}\hfill}
\def\!hpartialpattern#1{%
  \!dimenA=\!zpt \!dimenB=\!zpt
  \def\!Rule##1{#1{##1}\vrule height\linethickness width\!dimenD}%
  \def\!Skip##1{#1{##1}\hskip\!dimenD}%
  \!RSlist}
\def\!vleaders{%
  \def\!Rule##1{\hrule width\linethickness height##1}%
  \def\!Skip##1{\vskip##1}%
  \leaders\vbox{\!RSlist}\vfill}
\def\!vpartialpattern#1{%
  \!dimenA=\!zpt \!dimenB=\!zpt
  \def\!Rule##1{#1{##1}\hrule width\linethickness height\!dimenD}%
  \def\!Skip##1{#1{##1}\vskip\!dimenD}%
  \!RSlist}
\def\!Rtrunc#1{\!trunc{#1}>\!Rresiduallength}
\def\!Ltrunc#1{\!trunc{#1}<\!Lresiduallength}
\def\!trunc#1#2#3{%
  \!dimenA=\!dimenB
  \advance\!dimenB by #1%
  \!dimenD=\!dimenB  \ifdim\!dimenD#2#3\!dimenD=#3\fi
  \!dimenC=\!dimenA  \ifdim\!dimenC#2#3\!dimenC=#3\fi
  \advance \!dimenD by -\!dimenC}
\def\!start (#1,#2){%
  \!plotxorigin=\!xorigin  \advance \!plotxorigin by \!plotsymbolxshift
  \!plotyorigin=\!yorigin  \advance \!plotyorigin by \!plotsymbolyshift
  \!xS=\!M{#1}\!xunit \!yS=\!M{#2}\!yunit
  \!rotateaboutpivot\!xS\!yS
  \!copylist\!UDlist\to\!!UDlist% **\!UDlist has the form \\{dimen1}\\{dimen2}..
%                                 ** Routine will draw dashed line with pen
%                                 ** down for dimen1, up for dimen2, ...
  \!getnextvalueof\!downlength\from\!!UDlist
  \!distacross=\!zpt%             ** 1st point goes at start of curve
  \!intervalno=0 %                ** initialize interval counter
  \global\totalarclength=\!zpt%   ** initialize distance traveled along curve
  \ignorespaces}
\def\!ljoin (#1,#2){%
  \advance\!intervalno by 1
  \!xE=\!M{#1}\!xunit \!yE=\!M{#2}\!yunit
  \!rotateaboutpivot\!xE\!yE
  \!xdiff=\!xE \advance \!xdiff by -\!xS%**  xdiff = xE - xS
  \!ydiff=\!yE \advance \!ydiff by -\!yS%**  ydiff = yE - yS
  \!Pythag\!xdiff\!ydiff\!arclength%     **  arclength = sqrt(xdiff**2+ydiff**2)
  \global\advance \totalarclength by \!arclength%
  \!drawlinearsegment%   ** set by dashpat to \!linearsolid or \!lineardashed
  \!xS=\!xE \!yS=\!yE%   ** shift ending points to starting points
  \ignorespaces}
\def\!linearsolid{%
  \!npoints=\!arclength
  \!countA=\plotsymbolspacing
  \divide\!npoints by \!countA%      ** now #pts =. arclength/plotsymbolspacing
  \ifnum \!npoints<1
    \!npoints=1
  \fi
  \divide\!xdiff by \!npoints
  \divide\!ydiff by \!npoints
  \!xpos=\!xS \!ypos=\!yS
  \loop\ifnum\!npoints>-1
    \!plotifinbounds
    \advance \!xpos by \!xdiff
    \advance \!ypos by \!ydiff
    \advance \!npoints by -1
  \repeat
  \ignorespaces}
\def\!lineardashed{%
% **
  \ifdim\!distacross>\!arclength
    \advance \!distacross by -\!arclength  %nothing to plot in this interval
  \else
    \loop\ifdim\!distacross<\!arclength
%     ** plot point, interpolating linearly in x and y
      \!divide\!distacross\!arclength\!dimenA%  ** dimA = across/arclength
      \!removept\!dimenA\!t%  ** \!t holds value in dimA, without the "pt"
      \!xpos=\!t\!xdiff \advance \!xpos by \!xS
      \!ypos=\!t\!ydiff \advance \!ypos by \!yS
      \!plotifinbounds
      \advance\!distacross by \plotsymbolspacing
      \!advancedashing
    \repeat
    \advance \!distacross by -\!arclength%    ** prepare for next interval
  \fi
  \ignorespaces}
\def\!!advancedashing{%
  \advance\!downlength by -\plotsymbolspacing
  \ifdim \!downlength>\!zpt
  \else
    \advance\!distacross by \!downlength
    \!getnextvalueof\!uplength\from\!!UDlist
    \advance\!distacross by \!uplength
    \!getnextvalueof\!downlength\from\!!UDlist
  \fi}
\def\inboundscheckoff{%
  \def\!plotifinbounds{\!plot(\!xpos,\!ypos)}%
  \def\!initinboundscheck{\relax}\ignorespaces}
\def\!!plotifinbounds{%
  \ifdim \!xpos<\!checkleft
  \else
    \ifdim \!xpos>\!checkright
    \else
      \ifdim \!ypos<\!checkbot
      \else
         \ifdim \!ypos>\!checktop
         \else
           \!plot(\!xpos,\!ypos)
         \fi
      \fi
    \fi
  \fi}
\def\!!initinboundscheck{%
  \!checkleft=\!arealloc     \advance\!checkleft by \!xorigin
  \!checkright=\!arearloc    \advance\!checkright by \!xorigin
  \!checkbot=\!areabloc      \advance\!checkbot by \!yorigin
  \!checktop=\!areatloc      \advance\!checktop by \!yorigin}
\def\!logten#1#2{%
  \expandafter\!!logten#1\!nil
  \!removept\!dimenF#2%
  \ignorespaces}
\def\!!logten#1#2\!nil{%
  \if -#1%
    \!dimenF=\!zpt
    \def\!next{\ignorespaces}%
  \else
    \if +#1%
      \def\!next{\!!logten#2\!nil}%
    \else
      \if .#1%
        \def\!next{\!!logten0.#2\!nil}%
      \else
        \def\!next{\!!!logten#1#2..\!nil}%
      \fi
    \fi
  \fi
  \!next}
\def\!!!logten#1#2.#3.#4\!nil{%
  \!dimenF=1pt %                 ** DimF holds log10 original argument
  \if 0#1%
    \!!logshift#3pt %            ** Argument < 1
  \else %                        ** Argument >= 1
    \!logshift#2/%               ** Shift decimal pt as many places
    \!dimenE=#1.#2#3pt %         **   as there are figures in #2
  \fi %                          ** Now dimE holds revised X want log10 of
  \ifdim \!dimenE<\!rootten%          ** Transform X to XX between sqrt(10)
    \multiply \!dimenE 10 %           **   and 10*sqrt(10)
    \advance  \!dimenF -1pt
  \fi
  \!dimenG=\!dimenE%                  ** dimG <- (XX + 10)
    \advance\!dimenG 10pt
  \advance\!dimenE -10pt %            ** dimE <- (XX - 10)
  \multiply\!dimenE 10 %              ** dimE = 10*(XX-10)
  \!divide\!dimenE\!dimenG\!dimenE%   ** Now dimE=10t==10*(XX-10)/(XX+10)
  \!removept\!dimenE\!t%              ** !t=10t, with "pt" removed
  \!dimenG=\!t\!dimenE%               ** dimG=100t**2
  \!removept\!dimenG\!tt%             ** !tt=100t**2, with "pt" removed
  \!dimenH=\!tt\!tenAe%               ** dimH=10*a5*(10t)**2 /100
    \divide\!dimenH 100
  \advance\!dimenH \!tenAc%           ** ditto + 10*a3
  \!dimenH=\!tt\!dimenH%              ** ditto * (10t)**2 /100
    \divide\!dimenH 100
  \advance\!dimenH \!tenAa%           ** ditto + 10*a1
  \!dimenH=\!t\!dimenH%               ** ditto * 10t / 100
    \divide\!dimenH 100 %             ** Now dimH = log10(XX) - 1
  \advance\!dimenF \!dimenH}%         ** dimF = log10(X)
\def\!logshift#1{%
  \if #1/%
    \def\!next{\ignorespaces}%
  \else
    \advance\!dimenF 1pt
    \def\!next{\!logshift}%
  \fi
  \!next}
 \def\!!logshift#1{%
   \advance\!dimenF -1pt
   \if 0#1%
     \def\!next{\!!logshift}%
   \else
     \if p#1%
       \!dimenF=1pt
       \def\!next{\!dimenE=1p}%
     \else
       \def\!next{\!dimenE=#1.}%
     \fi
   \fi
   \!next}
\def\beginpicture{%
  \setbox\!picbox=\hbox\bgroup%
  \!xleft=\maxdimen
  \!xright=-\maxdimen
  \!ybot=\maxdimen
  \!ytop=-\maxdimen}
\def\endpicture{%
  \ifdim\!xleft=\maxdimen%  ** check if nothing was put in picbox
    \!xleft=\!zpt \!xright=\!zpt \!ybot=\!zpt \!ytop=\!zpt
  \fi
  \global\!Xleft=\!xleft \global\!Xright=\!xright
  \global\!Ybot=\!ybot \global\!Ytop=\!ytop
  \egroup%
  \ht\!picbox=\!Ytop  \dp\!picbox=-\!Ybot
  \ifdim\!Ybot>\!zpt
  \else
    \ifdim\!Ytop<\!zpt
      \!Ybot=\!Ytop
    \else
      \!Ybot=\!zpt
    \fi
  \fi
  \hbox{\kern-\!Xleft\lower\!Ybot\box\!picbox\kern\!Xright}}
\def\endpicturesave <#1,#2>{%
  \endpicture \global #1=\!Xleft \global #2=\!Ybot \ignorespaces}
\def\setcoordinatesystem{%
  \!ifnextchar{u}{\!getlengths }
    {\!getlengths units <\!xunit,\!yunit>}}
\def\!getlengths units <#1,#2>{%
  \!xunit=#1\relax
  \!yunit=#2\relax
  \!ifcoordmode
    \let\!SCnext=\!SCccheckforRP
  \else
    \let\!SCnext=\!SCdcheckforRP
  \fi
  \!SCnext}
\def\!SCccheckforRP{%
  \!ifnextchar{p}{\!cgetreference }
    {\!cgetreference point at {\!xref} {\!yref} }}
\def\!cgetreference point at #1 #2 {%
  \edef\!xref{#1}\edef\!yref{#2}%
  \!xorigin=\!xref\!xunit  \!yorigin=\!yref\!yunit
  \!initinboundscheck % ** See linear.tex
  \ignorespaces}
\def\!SCdcheckforRP{%
  \!ifnextchar{p}{\!dgetreference}%
    {\ignorespaces}}
\def\!dgetreference point at #1 #2 {%
  \!xorigin=#1\relax  \!yorigin=#2\relax
  \ignorespaces}
\long\def\put#1#2 at #3 #4 {%
  \!setputobject{#1}{#2}%
  \!xpos=\!M{#3}\!xunit  \!ypos=\!M{#4}\!yunit
  \!rotateaboutpivot\!xpos\!ypos%
  \advance\!xpos -\!xorigin  \advance\!xpos -\!xshift
  \advance\!ypos -\!yorigin  \advance\!ypos -\!yshift
  \kern\!xpos\raise\!ypos\box\!putobject\kern-\!xpos%
  \!doaccounting\ignorespaces}
\long\def\multiput #1#2 at {%
  \!setputobject{#1}{#2}%
  \!ifnextchar"{\!putfromfile}{\!multiput}}
\def\!putfromfile"#1"{%
  \expandafter\!multiput \input #1 /}
\def\!multiput{%
  \futurelet\!nextchar\!!multiput}
\def\!!multiput{%
  \if *\!nextchar
    \def\!nextput{\!alsoby}%
  \else
    \if /\!nextchar
      \def\!nextput{\!finishmultiput}%
    \else
      \def\!nextput{\!alsoat}%
    \fi
  \fi
  \!nextput}
\def\!finishmultiput/{%
  \setbox\!putobject=\hbox{}%
  \ignorespaces}
\def\!alsoat#1 #2 {%
  \!xpos=\!M{#1}\!xunit  \!ypos=\!M{#2}\!yunit
  \!rotateaboutpivot\!xpos\!ypos%
  \advance\!xpos -\!xorigin  \advance\!xpos -\!xshift
  \advance\!ypos -\!yorigin  \advance\!ypos -\!yshift
  \kern\!xpos\raise\!ypos\copy\!putobject\kern-\!xpos%
  \!doaccounting
  \!multiput}
\def\!alsoby*#1 #2 #3 {%
  \!dxpos=\!M{#2}\!xunit \!dypos=\!M{#3}\!yunit
  \!rotateonly\!dxpos\!dypos
  \!ntemp=#1%
  \!!loop\ifnum\!ntemp>0
    \advance\!xpos by \!dxpos  \advance\!ypos by \!dypos
    \kern\!xpos\raise\!ypos\copy\!putobject\kern-\!xpos%
    \advance\!ntemp by -1
  \repeat
  \!doaccounting
  \!multiput}
\def\accountingon{\def\!doaccounting{\!!doaccounting}\ignorespaces}
\def\!!doaccounting{%
  \!xtemp=\!xpos
  \!ytemp=\!ypos
  \ifdim\!xtemp<\!xleft
     \!xleft=\!xtemp
  \fi
  \advance\!xtemp by  \!wd
  \ifdim\!xright<\!xtemp
    \!xright=\!xtemp
  \fi
  \advance\!ytemp by -\!dp
  \ifdim\!ytemp<\!ybot
    \!ybot=\!ytemp
  \fi
  \advance\!ytemp by  \!dp
  \advance\!ytemp by  \!ht
  \ifdim\!ytemp>\!ytop
    \!ytop=\!ytemp
  \fi}
\long\def\!setputobject#1#2{%
  \setbox\!putobject=\hbox{#1}%
  \!ht=\ht\!putobject  \!dp=\dp\!putobject  \!wd=\wd\!putobject
  \wd\!putobject=\!zpt
  \!xshift=.5\!wd   \!yshift=.5\!ht   \advance\!yshift by -.5\!dp
  \edef\!putorientation{#2}%
  \expandafter\!SPOreadA\!putorientation[]\!nil%
  \expandafter\!SPOreadB\!putorientation<\!zpt,\!zpt>\!nil\ignorespaces}
\def\!SPOreadA#1[#2]#3\!nil{\!etfor\!orientation:=#2\do\!SPOreviseshift}
\def\!SPOreadB#1<#2,#3>#4\!nil{\advance\!xshift by -#2\advance\!yshift by -#3}
\def\!SPOreviseshift{%
  \if l\!orientation
    \!xshift=\!zpt
  \else
    \if r\!orientation
      \!xshift=\!wd
    \else
      \if b\!orientation
        \!yshift=-\!dp
      \else
        \if B\!orientation
          \!yshift=\!zpt
        \else
          \if t\!orientation
            \!yshift=\!ht
          \fi
        \fi
      \fi
    \fi
  \fi}
\long\def\!dimenput#1#2(#3,#4){%
  \!setputobject{#1}{#2}%
  \!xpos=#3\advance\!xpos by -\!xshift
  \!ypos=#4\advance\!ypos by -\!yshift
  \kern\!xpos\raise\!ypos\box\!putobject\kern-\!xpos%
  \!doaccounting\ignorespaces}
\def\!setdimenmode{%
  \let\!M=\!M!!\ignorespaces}
\def\!setcoordmode{%
  \let\!M=\!M!\ignorespaces}
\def\!ifcoordmode{%
  \ifx \!M \!M!}
\def\!ifdimenmode{%
  \ifx \!M \!M!!}
\def\!M!#1#2{#1#2}
\def\!M!!#1#2{#1}
\let\setdimensionmode=\!setdimenmode
\let\setcoordinatemode=\!setcoordmode
\def\!stack[#1]{%
  \let\!lglue=\hfill \let\!rglue=\hfill
  \expandafter\let\csname !#1glue\endcsname=\relax
  \!ifnextchar<{\!!stack}{\!!stack<\stackleading>}}
\def\!!stack<#1>#2{%
  \vbox{\def\!valueslist{}\!ecfor\!value:=#2\do{%
    \expandafter\!rightappend\!value\withCS{\\}\to\!valueslist}%
    \!lop\!valueslist\to\!value
    \let\\=\cr\lineskiplimit=\maxdimen\lineskip=#1%
    \baselineskip=-1000pt\halign{\!lglue##\!rglue\cr \!value\!valueslist\cr}}%
  \ignorespaces}
\def\!lines[#1]#2{%
  \let\!lglue=\hfill \let\!rglue=\hfill
  \expandafter\let\csname !#1glue\endcsname=\relax
  \vbox{\halign{\!lglue##\!rglue\cr #2\crcr}}%
  \ignorespaces}
\def\!Lines[#1]#2{%
  \let\!lglue=\hfill \let\!rglue=\hfill
  \expandafter\let\csname !#1glue\endcsname=\relax
  \vtop{\halign{\!lglue##\!rglue\cr #2\crcr}}%
  \ignorespaces}
\def\setplotsymbol(#1#2){%
  \!setputobject{#1}{#2}
  \setbox\!plotsymbol=\box\!putobject%
  \!plotsymbolxshift=\!xshift
  \!plotsymbolyshift=\!yshift
  \ignorespaces}
\def\!!plot(#1,#2){%
  \!dimenA=-\!plotxorigin \advance \!dimenA by #1%    ** over
  \!dimenB=-\!plotyorigin \advance \!dimenB by #2%    ** up
  \kern\!dimenA\raise\!dimenB\copy\!plotsymbol\kern-\!dimenA%
  \ignorespaces}
\def\!!!plot(#1,#2){%
  \!dimenA=-\!plotxorigin \advance \!dimenA by #1%    ** over
  \!dimenB=-\!plotyorigin \advance \!dimenB by #2%    ** up
  \kern\!dimenA\raise\!dimenB\copy\!plotsymbol\kern-\!dimenA%
  \!countE=\!dimenA
  \!countF=\!dimenB
  \immediate\write\!replotfile{\the\!countE,\the\!countF.}%
  \ignorespaces}
\def\savelinesandcurves on "#1" {%
  \immediate\closeout\!replotfile
  \immediate\openout\!replotfile=#1%
  \let\!plot=\!!!plot}
\def\dontsavelinesandcurves {%
  \let\!plot=\!!plot}
\xdef\!Commentsignal{%}}
\def\writesavefile#1 {%
  \immediate\write\!replotfile{\!Commentsignal #1}%
  \ignorespaces}

% ** \replot "FILE_NAME"
% **   Replots the locations saved earlier under \savelinesandcurves
% **   on "FILE_NAME"
% ** See Subsection 5.6 of the manual.
\def\replot"#1" {%
  \expandafter\!replot\input #1 /}
\def\!replot#1,#2. {%
  \!dimenA=#1sp
  \kern\!dimenA\raise#2sp\copy\!plotsymbol\kern-\!dimenA
  \futurelet\!nextchar\!!replot}
\def\!!replot{%
  \if /\!nextchar
    \def\!next{\!finish}%
  \else
    \def\!next{\!replot}%
  \fi
  \!next}
% **************************************************
% ***  PYTHAGORAS  (Euclidean distance function) ***
% **************************************************

% ** User command:
% **   \placehypotenuse for <dimension1> and <dimension2> in <register>

% ** Internal command:
% **   \!Pythag{X}{Y}{Z}
% **     Input X,Y are dimensions, or dimension registers.
% **     Output Z == sqrt(X**2+Y**2) must be a dimension register.
% **     Assumes that |X|+|Y| < 2048pt (about 28in).

% ** Without loss of generality, suppose  x>0, y>0.  Put s = x+y,
% **   z = sqrt(x**2+y**2). Then  z = s*f,  where  f = sqrt(t**2 + (1-t)**2)
% **   = sqrt((1+tau**2)/2), where  t = x/s  and  tau = 2(t-1/2) .

% ** Uses the \!divide macro (which uses registers \!dimenA--\!dimenD.
% ** Uses the \!removept macro   (e.g., 123.45pt --> 123.45)
% ** Uses registers \!dimenE--\!dimenI.
\def\!Pythag#1#2#3{%
  \!dimenE=#1\relax
  \ifdim\!dimenE<\!zpt
    \!dimenE=-\!dimenE
  \fi%                                            ** dimE = |x|
  \!dimenF=#2\relax
  \ifdim\!dimenF<\!zpt
    \!dimenF=-\!dimenF
  \fi%                                            ** dimF = |y|
  \advance \!dimenF by \!dimenE%                  ** dimF = s = |x|+|y|
  \ifdim\!dimenF=\!zpt
    \!dimenG=\!zpt%                               ** dimG = z = sqrt(x**2+y**2)
  \else
    \!divide{8\!dimenE}\!dimenF\!dimenE%          ** now dimE = 8t = (8|x|)/s
    \advance\!dimenE by -4pt%                     ** 8tau = (8t-4)*2
      \!dimenE=2\!dimenE%                         **   (tau = 2*t - 1)
    \!removept\!dimenE\!!t%                       ** 8tau, without "pt"
    \!dimenE=\!!t\!dimenE%                        ** (8tau)**2, in pts
    \advance\!dimenE by 64pt%                     ** u = [64 + (8tau)**2]/2
    \divide \!dimenE by 2%                        **   [u = (8f)**2]
    \!dimenH=7pt%                                 ** initial guess g at sqrt(u)
    \!!Pythag\!!Pythag\!!Pythag%                  ** 3 iterations give sqrt(u)
    \!removept\!dimenH\!!t%                       ** 8f=sqrt(u), without "pt"
    \!dimenG=\!!t\!dimenF%                        ** z = (8f)*s/8
    \divide\!dimenG by 8
  \fi
  #3=\!dimenG
  \ignorespaces}

\def\!!Pythag{%                                   ** Newton-Raphson for sqrt
  \!divide\!dimenE\!dimenH\!dimenI%               ** v = u/g
  \advance\!dimenH by \!dimenI%                   ** g <-- (g + u/g)/2
    \divide\!dimenH by 2}

% **  \placehypotenuse for <XI> and <ETA> in <ZETA>
% **  See Subsection 9.3 of the manual.
\def\placehypotenuse for <#1> and <#2> in <#3> {%
  \!Pythag{#1}{#2}{#3}}

% **********************************************
% *** QUADRATIC ARC  (Draws a quadratic arc) ***
% **********************************************

% **  Internal command
% **    \!qjoin (XCOORD1,YCOORD1) (XCOORD2,YCOORD2)

% **  \!qjoin (XCOORD1,YCOORD1) (XCOORD2,YCOORD2)
% **  Draws an arc starting at the (last) point specified by the most recent
% **  \!qjoin, or \!ljoin, or \!start  and passing through (X_1,Y_1), (X_2,Y_2).
% **  Uses quadratic interpolation in both  x  and  y:
% **    x(t), 0 <= t <= 1, interpolates  x_0, x_1, x_2  at  t=0, .5, 1
% **    y(t), 0 <= t <= 1, interpolates  y_0, y_1, y_2  at  t=0, .5, 1

\def\!qjoin (#1,#2) (#3,#4){%
  \advance\!intervalno by 1
  \!ifcoordmode
    \edef\!xmidpt{#1}\edef\!ymidpt{#2}%
  \else
    \!dimenA=#1\relax \edef\!xmidpt{\the\!dimenA}%
    \!dimenA=#2\relax \edef\!xmidpt{\the\!dimenA}%
  \fi
  \!xM=\!M{#1}\!xunit  \!yM=\!M{#2}\!yunit   \!rotateaboutpivot\!xM\!yM
  \!xE=\!M{#3}\!xunit  \!yE=\!M{#4}\!yunit   \!rotateaboutpivot\!xE\!yE
%
% ** Find coefficients for x(t)=a_x + b_x*t + c_x*t**2
  \!dimenA=\!xM  \advance \!dimenA by -\!xS%   ** dimA = I = xM - xS
  \!dimenB=\!xE  \advance \!dimenB by -\!xM%   ** dimB = II = xE-xM
  \!xB=3\!dimenA \advance \!xB by -\!dimenB%   ** b=3I-II
  \!xC=2\!dimenB \advance \!xC by -2\!dimenA%  ** c=2(II-I)
%
% ** Find coefficients for y(t)=y_x + b_y*t + c_y*t**2
  \!dimenA=\!yM  \advance \!dimenA by -\!yS%
  \!dimenB=\!yE  \advance \!dimenB by -\!yM%
  \!yB=3\!dimenA \advance \!yB by -\!dimenB%
  \!yC=2\!dimenB \advance \!yC by -2\!dimenA%
%
% ** Use Simpson's rule to calculate arc length over [0,1/2]:
% **   arc length = 1/2[1/6 f(0) + 4/6 f(1/4) + 1/6 f(1/2)]
% ** with f(t) = sqrt(x'(t)**2 + y'(t)**2).
  \!xprime=\!xB  \!yprime=\!yB%          ** x'(t) = b + 2ct
  \!dxprime=.5\!xC  \!dyprime=.5\!yC%    ** dt=1/4 ==> dx'(t) = c/2
  \!getf \!midarclength=\!dimenA
  \!getf \advance \!midarclength by 4\!dimenA
  \!getf \advance \!midarclength by \!dimenA
  \divide \!midarclength by 12
%
% ** Get arc length over [0,1].
  \!arclength=\!dimenA
  \!getf \advance \!arclength by 4\!dimenA
  \!getf \advance \!arclength by \!dimenA
  \divide \!arclength by 12%             ** Now have arc length over [1/2,1]
  \advance \!arclength by \!midarclength
  \global\advance \totalarclength by \!arclength
%
%
% ** Check to see if there's anything to plot in this interval
  \ifdim\!distacross>\!arclength
    \advance \!distacross by -\!arclength%   ** nothing
  \else
    \!initinverseinterp%  ** initialize for inverse interpolation on arc length
    \loop\ifdim\!distacross<\!arclength%     ** loop over points on arc
      \!inverseinterp%    ** find  t  such that arc length[0,t] = distacross,
%                         **   using inverse quadratic interpolation
%                         ** now evaluate x(t)=(c*t + b)*t + a
      \!xpos=\!t\!xC \advance\!xpos by \!xB
        \!xpos=\!t\!xpos \advance \!xpos by \!xS
%                                             ** evaluate y(t)
      \!ypos=\!t\!yC \advance\!ypos by \!yB
        \!ypos=\!t\!ypos \advance \!ypos by \!yS
      \!plotifinbounds%                       ** plot point if in bounds
      \advance\!distacross \plotsymbolspacing%** advance arc length for next pt
      \!advancedashing%                       ** see "linear"
    \repeat
    \advance \!distacross by -\!arclength%    ** prepare for next interval
  \fi
  \!xS=\!xE%              ** shift ending points to starting points
  \!yS=\!yE
  \ignorespaces}

% ** \!getf -- Calculates sqrt(x'(t)**2 + y'(t)**2) and advances
% **   x'(t) and y'(t)
\def\!getf{\!Pythag\!xprime\!yprime\!dimenA%
  \advance\!xprime by \!dxprime
  \advance\!yprime by \!dyprime}

% ** \!initinverseinterp -- initializes for inverse quadratic interpolation
% ** of arc length provided  1/3 < midarclength/arclength < 2/3; otherwise
% ** initializes for inverse linear interpolation.
\def\!initinverseinterp{%
  \ifdim\!arclength>\!zpt
    \!divide{8\!midarclength}\!arclength\!dimenE% ** dimE=8w=8r/s, where  r
%                                               **  = midarclength, s=arclength
% **  Test for  w  out of range:  w<1/3  or w>2/3
    \ifdim\!dimenE<\!wmin \!setinverselinear
    \else
      \ifdim\!dimenE>\!wmax \!setinverselinear
      \else%                                    ** w  in range: initialize
        \def\!inverseinterp{\!inversequad}\ignorespaces
%
% **     Calculate the coefficients  \!beta  and  \!gamma  of the quadratic
% **                    t = \!beta*v + \!gamma*v**2
% **     taking the values  t=0, 1/2, 1  at  v=0, w==r/s, 1  respectively:
% **        \!beta = (1/2 - w**2)/[w(1-w)]
% **        \!gamma = 1 - beta.
%
         \!removept\!dimenE\!Ew%           **  8w, without "pt"
         \!dimenF=-\!Ew\!dimenE%           **  -(8w)**2
         \advance\!dimenF by 32pt%         **  32 - (8w)**2
         \!dimenG=8pt
         \advance\!dimenG by -\!dimenE%    **  8 - 8w
         \!dimenG=\!Ew\!dimenG%            **  (8w)*(8-8w)
         \!divide\!dimenF\!dimenG\!beta%   **  beta = (32-(8w)**2)/(8w(8-8w))
%                                          **       = (1/2 - w**2)/(w(1-w))
         \!gamma=1pt
         \advance \!gamma by -\!beta%      **  gamma = 1-beta
      \fi%       ** end of the \ifdim\!dimenE>\!wmax
    \fi%         ** end of the \ifdim\!dimenE<\!wmin
  \fi%           ** end of the \ifdim\!arclength>\!zpt
  \ignorespaces}

% ** For 0 <= t <= 1, let AL(t) = arclength[0,t]/arclength[0,1]; note
% ** AL(0)=0, AL(1/2)=midarclength/arclength, AL(1)=1.  This routine
% ** calculates an approximation to AL^{-1}(distance across/arclength),
% ** using the assumption that AL^{-1} is quadratic.  Specifically,
% ** it finds  t  such that
% **    AL^{-1}(v) =. t = v*(\!beta + \!gamma*v)
% ** where  \!beta  and  \!gamma  are set by \!initinv, and where
% ** v=distance across/arclength
\def\!inversequad{%
  \!divide\!distacross\!arclength\!dimenG%   ** dimG = v = distacross/arclength
  \!removept\!dimenG\!v%                     ** v, without "pt"
  \!dimenG=\!v\!gamma%                       ** gamma*v
  \advance\!dimenG by \!beta%                ** beta + gamma*v
  \!dimenG=\!v\!dimenG%                      ** t = v*(beta + gamma*v)
  \!removept\!dimenG\!t}%                    ** t, without "pt"

% ** When  w <= 1/3  or  w >= 2/3, the following routine writes (using
% ** plain TEK's \wlog command) a warning message on the user's log file,
% ** and initializes for inverse linear interpolation on arc length.
\def\!setinverselinear{%
  \def\!inverseinterp{\!inverselinear}%
  \divide\!dimenE by 8 \!removept\!dimenE\!t
  \!countC=\!intervalno \multiply \!countC 2
  \!countB=\!countC     \advance \!countB -1
  \!countA=\!countB     \advance \!countA -1
  \wlog{\the\!countB th point (\!xmidpt,\!ymidpt) being plotted
    doesn't lie in the}%
  \wlog{ middle third of the arc between the \the\!countA th
    and \the\!countC th points:}%
  \wlog{ [arc length \the\!countA\space to \the\!countB]/[arc length
    \the \!countA\space to \the\!countC]=\!t.}%
  \ignorespaces}

% **  Inverse linear interpolation
\def\!inverselinear{%
  \!divide\!distacross\!arclength\!dimenG
  \!removept\!dimenG\!t}

% **************************************
% **  ROTATIONS  (Handles rotations) ***
% **************************************

% ** User commands
% **   \startrotation [by COS_OF_ANGLE SIN_OF_ANGLE] [about XPIVOT YPIVOT]
% **   \stoprotation

% **   \startrotation [by COS_OF_ANGLE SIN_OF_ANGLE] [about XPIVOT YPIVOT]
% ** Future (XCOORD,YCOORD)'s will be rotated about (XPIVOT,YPIVOT)
% ** by the angle with the give COS and SIN. Both fields are optional.
% ** [COS,SIN] defaults to previous value, or (1,0).
% ** (XPIVOT,YPIVOT) defaults to previous value, or (0,0)
% ** You can't change the coordinate system in the scope of a rotation.
% ** See Subsection 9.1 of the manual.
\def\startrotation{%
  \let\!rotateaboutpivot=\!!rotateaboutpivot
  \let\!rotateonly=\!!rotateonly
  \!ifnextchar{b}{\!getsincos }%
    {\!getsincos by {\!cosrotationangle} {\!sinrotationangle} }}
\def\!getsincos by #1 #2 {%
  \edef\!cosrotationangle{#1}%
  \edef\!sinrotationangle{#2}%
  \!ifcoordmode
    \let\!ROnext=\!ccheckforpivot
  \else
    \let\!ROnext=\!dcheckforpivot
  \fi
  \!ROnext}
\def\!ccheckforpivot{%
  \!ifnextchar{a}{\!cgetpivot}%
    {\!cgetpivot about {\!xpivotcoord} {\!ypivotcoord} }}
\def\!cgetpivot about #1 #2 {%
  \edef\!xpivotcoord{#1}%
  \edef\!ypivotcoord{#2}%
  \!xpivot=#1\!xunit  \!ypivot=#2\!yunit
  \ignorespaces}
\def\!dcheckforpivot{%
  \!ifnextchar{a}{\!dgetpivot}{\ignorespaces}}
\def\!dgetpivot about #1 #2 {%
  \!xpivot=#1\relax  \!ypivot=#2\relax
  \ignorespaces}

% ** Following terminates rotation.
% ** See Subsection 9.1 of the manual.
\def\stoprotation{%
  \let\!rotateaboutpivot=\!!!rotateaboutpivot
  \let\!rotateonly=\!!!rotateonly
  \ignorespaces}

% ** !!rotateaboutpivot{XREG}{YREG}
% ** XREG <-- xpvt + cos(angle)*(XREG-xpvt) - sin(angle)*(YREG-ypvt)
% ** YREG <-- ypvt + cos(angle)*(YREG-ypvt) + sin(angle)*(XREG-xpvt)
% ** XREG,YREG are dimension registers. Can't be \!dimenA to \!dimenD
\def\!!rotateaboutpivot#1#2{%
  \!dimenA=#1\relax  \advance\!dimenA -\!xpivot
  \!dimenB=#2\relax  \advance\!dimenB -\!ypivot
  \!dimenC=\!cosrotationangle\!dimenA
    \advance \!dimenC -\!sinrotationangle\!dimenB
  \!dimenD=\!cosrotationangle\!dimenB
    \advance \!dimenD  \!sinrotationangle\!dimenA
  \advance\!dimenC \!xpivot  \advance\!dimenD \!ypivot
  #1=\!dimenC  #2=\!dimenD
  \ignorespaces}

% ** \!!rotateonly{XREG}{YREG}
% ** Like \!!rotateaboutpivot, but with a pivot of  (0,0)
\def\!!rotateonly#1#2{%
  \!dimenA=#1\relax  \!dimenB=#2\relax
  \!dimenC=\!cosrotationangle\!dimenA
    \advance \!dimenC -\!rotsign\!sinrotationangle\!dimenB
  \!dimenD=\!cosrotationangle\!dimenB
    \advance \!dimenD  \!rotsign\!sinrotationangle\!dimenA
  #1=\!dimenC  #2=\!dimenD
  \ignorespaces}
\def\!rotsign{}
\def\!!!rotateaboutpivot#1#2{\relax}
\def\!!!rotateonly#1#2{\relax}
\stoprotation

\def\!reverserotateonly#1#2{%
  \def\!rotsign{-}%
  \!rotateonly{#1}{#2}%
  \def\!rotsign{}%
  \ignorespaces}

\def\!getspan span <#1>{%
  \!dshade=#1\relax
  \!ifcoordmode
    \let\!GRnext=\!GRccheckforAP
  \else
    \let\!GRnext=\!GRdcheckforAP
  \fi
  \!GRnext}
\def\!GRccheckforAP{%
  \!ifnextchar{p}{\!cgetanchor }
    {\!cgetanchor point at {\!xshadesave} {\!yshadesave} }}
\def\!cgetanchor point at #1 #2 {%
  \edef\!xshadesave{#1}\edef\!yshadesave{#2}%
  \!xshade=\!xshadesave\!xunit  \!yshade=\!yshadesave\!yunit
  \ignorespaces}
\def\!GRdcheckforAP{%
  \!ifnextchar{p}{\!dgetanchor}%
    {\ignorespaces}}
\def\!dgetanchor point at #1 #2 {%
  \!xshade=#1\relax  \!yshade=#2\relax
  \ignorespaces}

% **  \setshadesymbol  [<LS, RS, BS, TS>] ({SHADESYMBOL}
% **    <XDIMEN,YDIMEN> [ORIENTATION])
% **  Saves SHADESYMBOL away in an hbox for use with shading routines.
% **  A shade symbol will not be plotted if its plot position comes within
% **    distance LS of the left boundary,  RS of the right boundary,  TS of the
% **    top boundary,  BS of the bottom boundary.  These parameters have
% **    default values that should work in most cases (see below).
% **    To override a default value, specify the replacement value
% **    in the appropriate subfield of the shrinkages field.
% **    0pt may be coded as  "z" (without the quotes).  To accept a
% **    default value, leave the field empty.  Thus
% **      [,z,,5pt]  sets  LS=default, RS=0pt, BS=default, TS=5pt .
% **    Skipping the shrinkages field accepts all the defaults.
% **  See Subsection 7.1 of the manual.
\def\setshadesymbol{%
  \!ifnextchar<{\!setshadesymbol}{\!setshadesymbol<,,,> }}

\def\!setshadesymbol <#1,#2,#3,#4> (#5#6){%
% **  set the shadesymbol
  \!setputobject{#5}{#6}%
  \setbox\!shadesymbol=\box\!putobject%
  \!shadesymbolxshift=\!xshift \!shadesymbolyshift=\!yshift
%
% **  set the shrinkages
  \!dimenA=\!xshift \advance\!dimenA \!smidge% ** default LS = xshift - smidge
  \!override\!dimenA{#1}\!lshrinkage%
  \!dimenA=\!wd \advance \!dimenA -\!xshift%   ** default RS = width - xshift
    \advance\!dimenA \!smidge%                                  - smidge
    \!override\!dimenA{#2}\!rshrinkage
  \!dimenA=\!dp \advance \!dimenA \!yshift%    ** default BS = depth + yshift
    \advance\!dimenA \!smidge%                                  - smidge
    \!override\!dimenA{#3}\!bshrinkage
  \!dimenA=\!ht \advance \!dimenA -\!yshift%   ** default TS = height - yshift
    \advance\!dimenA \!smidge%                                  - smidge
    \!override\!dimenA{#4}\!tshrinkage
  \ignorespaces}
\def\!smidge{-.2pt}%

% ** \!override{NOMINAL DIMEN}{REPLACEMENT DIMEN}{DIMEN}
% ** Overrides the NOMINAL DIMEN by the REPLACEMENT DIMEN to produce DIMEN,
% ** according to the following rules:
% **   REPLACEMENT DIMEN empty: DIMEN <-- NOMINAL DIMEN
% **   REPLACEMENT DIMEN z:     DIMEN <-- 0pt
% **   otherwise:               DIMEN <-- REPLACEMENT DIMEN
% ** DIMEN must be a dimension register
\def\!override#1#2#3{%
  \edef\!!override{#2}%
  \ifx \!!override\empty
    #3=#1\relax
  \else
    \if z\!!override
      #3=\!zpt
    \else
      \ifx \!!override\!blankz
        #3=\!zpt
      \else
        #3=#2\relax
      \fi
    \fi
  \fi
  \ignorespaces}
\def\!blankz{ z}

\setshadesymbol ({\rm .})
%\setshadesymbol ({\fiverm .})%       ** initialize plotsymbol
%                                    ** \fivesy ^^B  is a small cross

% ** \!startvshade [at] (xS,ybS,ytS)
% ** Initiates vertical shading mode
\def\!startvshade#1(#2,#3,#4){%
  \let\!!xunit=\!xunit%
  \let\!!yunit=\!yunit%
  \let\!!xshade=\!xshade%
  \let\!!yshade=\!yshade%
  \def\!getshrinkages{\!vgetshrinkages}%
  \let\!setshadelocation=\!vsetshadelocation%
  \!xS=\!M{#2}\!!xunit
  \!ybS=\!M{#3}\!!yunit
  \!ytS=\!M{#4}\!!yunit
  \!shadexorigin=\!xorigin  \advance \!shadexorigin \!shadesymbolxshift
  \!shadeyorigin=\!yorigin  \advance \!shadeyorigin \!shadesymbolyshift
  \ignorespaces}

% ** \!starthshade [at] (yS,xlS,xrS)
% ** Initiates horizontal shading mode
\def\!starthshade#1(#2,#3,#4){%
  \let\!!xunit=\!yunit%
  \let\!!yunit=\!xunit%
  \let\!!xshade=\!yshade%
  \let\!!yshade=\!xshade%
  \def\!getshrinkages{\!hgetshrinkages}%
  \let\!setshadelocation=\!hsetshadelocation%
  \!xS=\!M{#2}\!!xunit
  \!ybS=\!M{#3}\!!yunit
  \!ytS=\!M{#4}\!!yunit
  \!shadexorigin=\!xorigin  \advance \!shadexorigin \!shadesymbolxshift
  \!shadeyorigin=\!yorigin  \advance \!shadeyorigin \!shadesymbolyshift
  \ignorespaces}

% **  \!lattice{ANCHOR}{SPAN}{LOCATION}{INDEX}{LATTICE LOCATION}
% **  Consider the lattice with points  ANCHOR + j*SPAN. This routine determines
% **  the index  k  of the smallest lattice point >= LOCATION, and sets
% **  LATTICE LOCATION = ANCHOR + k*SPAN.
% **  INDEX is assumed to be a count register, LATTICE LOCATION a dimen reg.
\def\!lattice#1#2#3#4#5{%
  \!dimenA=#1%                        ** dimA = ANCHOR
  \!dimenB=#2%                        ** dimB = SPAN  (assumed > 0pt)
  \!countB=\!dimenB%                  ** ctB  = SPAN, as a count
%
% ** Determine index of smallest lattice point >= LOCATION
  \!dimenC=#3%                        ** dimC = LOCATION
  \advance\!dimenC -\!dimenA%         ** now dimC = LOCATION-ANCHOR
  \!countA=\!dimenC%                  ** ctA = above, as a count
  \divide\!countA \!countB%           ** now ctA = desired index, if dimC <= 0
  \ifdim\!dimenC>\!zpt
    \!dimenD=\!countA\!dimenB%        ** (tentative k)*span
    \ifdim\!dimenD<\!dimenC%          ** if this is false, ctA = desired index
      \advance\!countA 1 %            ** if true, have to add 1
    \fi
  \fi
  \!dimenC=\!countA\!dimenB%          ** lattice location = anchor + ctA*span
    \advance\!dimenC \!dimenA
  #4=\!countA%                        ** the desired index
  #5=\!dimenC%                        ** corresponding lattice location
  \ignorespaces}

% ** \!qshade [with shrinkages] [[LS,RS,BS,TS]]
% ***** during vertical shading:
% **    [the region from (xS,ybS,ytS) to] (xM,ybM,ytM) [and] (xE,ybE,ytE)
% ** Shades the region {(x,y): xS <= x <= xE, yb(x) <= y <= yt(x)}, where
% **   yb is the quadratic thru (xS,ybS) & (xM,ybM) & (xE,ybE)
% **   yt is the quadratic thru (xS,ytS) & (xM,ybM) & (xE,ytE)
% ** xS,ybS,ytS are either given by \!startvshade or carried over
% **   as the ending values of the immediately preceding \!qshade.
% ** For the interpretation of LS, RS, BS, & TS, see \setshadesymbol. The
% **   values set there can be overridden, for the course of this \!qshade
% **   only, in the same manner as overrides are specified for
% **   \setshadesymbol.
% ***** during horizontal shading:
% **    [the region from (yS,xlS,xrS) to] (yM,xlM,xrM) [and] (yE,xlE,xrE)
\def\!qshade#1(#2,#3,#4)#5(#6,#7,#8){%
  \!xM=\!M{#2}\!!xunit
  \!ybM=\!M{#3}\!!yunit
  \!ytM=\!M{#4}\!!yunit
  \!xE=\!M{#6}\!!xunit
  \!ybE=\!M{#7}\!!yunit
  \!ytE=\!M{#8}\!!yunit
  \!getcoeffs\!xS\!ybS\!xM\!ybM\!xE\!ybE\!ybB\!ybC%**Get coefficients B & C for
  \!getcoeffs\!xS\!ytS\!xM\!ytM\!xE\!ytE\!ytB\!ytC%**y=y0 + B(x-X0) + C(x-X0)**2
  \def\!getylimits{\!qgetylimits}%
  \!shade{#1}\ignorespaces}

% ** \!lshade ... (xE,ybE,ytE)
% ** This is like \!qshade, but the top and bottom boundaries are linear,
% ** rather than quadratic.
\def\!lshade#1(#2,#3,#4){%
  \!xE=\!M{#2}\!!xunit
  \!ybE=\!M{#3}\!!yunit
  \!ytE=\!M{#4}\!!yunit
  \!dimenE=\!xE  \advance \!dimenE -\!xS%   ** xE-xS
  \!dimenC=\!ytE \advance \!dimenC -\!ytS%  ** ytE-ytS
  \!divide\!dimenC\!dimenE\!ytB%            ** ytB = (ytE-ytS)/(xE-xS)
  \!dimenC=\!ybE \advance \!dimenC -\!ybS%  ** ybE-ybS
  \!divide\!dimenC\!dimenE\!ybB%            ** ybB = (ybE-ybS)/(xE-xS)
  \def\!getylimits{\!lgetylimits}%
  \!shade{#1}\ignorespaces}

% **  \!getcoeffs{X0}{Y0}{X1}{Y1}{X2}{Y2}{B}{C}
% **  Finds  B  and  C  such that the quadratic  y = Y0 + B(x-X0) + C(x-X0)**2
% **  passes through (X1,Y1) and (X2,Y2):  when X0=0=Y0, the formulas are:
% **                   B = S1 - X1*C,   C = (S2-S1)/X2
% **  with
% **                 S1 = Y1/X1,   S2 = (Y2-Y1)/(X2-X1).
\def\!getcoeffs#1#2#3#4#5#6#7#8{%
  \!dimenC=#4\advance \!dimenC -#2%            ** dimC=Y1-Y0
  \!dimenE=#3\advance \!dimenE -#1%            ** dimE=X1-X0
  \!divide\!dimenC\!dimenE\!dimenF%            ** dimF=S1
  \!dimenC=#6\advance \!dimenC -#4%            ** dimC=Y2-Y1
  \!dimenH=#5\advance \!dimenH -#3%            ** dimH=X2-X1
  \!divide\!dimenC\!dimenH\!dimenG%            ** dimG=S2
  \advance\!dimenG -\!dimenF%                  ** dimG=S2-S1
  \advance \!dimenH \!dimenE%                  ** dimH=X2-X0
  \!divide\!dimenG\!dimenH#8%                  ** C=(S2-S1)/(X2-X0)
  \!removept#8\!t%                             ** C, without "pt"
  #7=-\!t\!dimenE%                             ** -C*(X1-X0)
  \advance #7\!dimenF%                         ** B=S1-C*(X1-X0)
  \ignorespaces}

\def\!shade#1{%
% ** Get LS,RS,BS,TS for this panel
  \!getshrinkages#1<,,,>\!nil% %       ** now effective LS=dimE, RS=dimF,
%                                      **   BS=dimG, TS=dimH
  \advance \!dimenE \!xS%              ** now dimE=xS+LS
  \!lattice\!!xshade\!dshade\!dimenE%  ** set parity=index of left-mst x-lattice
    \!parity\!xpos%                    **   point >= xS+LS, xpos=its location
  \!dimenF=-\!dimenF%                  ** set dimF=xE-RS
    \advance\!dimenF \!xE
  \!loop\!not{\ifdim\!xpos>\!dimenF}%  ** loop over x-lattice points <= xE-RS
    \!shadecolumn%
    \advance\!xpos \!dshade%           ** move over to next column
    \advance\!parity 1%                ** increase index of x-point
  \repeat
  \!xS=\!xE%                           ** shift ending values to starting values
  \!ybS=\!ybE
  \!ytS=\!ytE
  \ignorespaces}

\def\!vgetshrinkages#1<#2,#3,#4,#5>#6\!nil{%
  \!override\!lshrinkage{#2}\!dimenE
  \!override\!rshrinkage{#3}\!dimenF
  \!override\!bshrinkage{#4}\!dimenG
  \!override\!tshrinkage{#5}\!dimenH
  \ignorespaces}
\def\!hgetshrinkages#1<#2,#3,#4,#5>#6\!nil{%
  \!override\!lshrinkage{#2}\!dimenG
  \!override\!rshrinkage{#3}\!dimenH
  \!override\!bshrinkage{#4}\!dimenE
  \!override\!tshrinkage{#5}\!dimenF
  \ignorespaces}

\def\!shadecolumn{%
  \!dxpos=\!xpos
  \advance\!dxpos -\!xS%            ** dx = x - xS
  \!removept\!dxpos\!dx%            ** ditto, without "pt"
  \!getylimits%                     ** get top and bottom y-values
  \advance\!ytpos -\!dimenH%        ** less TS
  \advance\!ybpos \!dimenG%         ** plus BS
  \!yloc=\!!yshade%                 ** get anchor point for this column
  \ifodd\!parity
     \advance\!yloc \!dshade
  \fi
  \!lattice\!yloc{2\!dshade}\!ybpos%
    \!countA\!ypos%                 ** ypos=smallest y point for this column
  \!dimenA=-\!shadexorigin \advance \!dimenA \!xpos%      ** over
  \loop\!not{\ifdim\!ypos>\!ytpos}% ** loop over ypos <= yt(t)
    \!setshadelocation%             ** vmode: xloc=xpos, yloc=ypos
%                                   ** hmode: xloc=ypos, yloc=xpos
    \!rotateaboutpivot\!xloc\!yloc%
    \!dimenA=-\!shadexorigin \advance \!dimenA \!xloc%    ** over
    \!dimenB=-\!shadeyorigin \advance \!dimenB \!yloc%    ** up
    \kern\!dimenA \raise\!dimenB\copy\!shadesymbol \kern-\!dimenA
    \advance\!ypos 2\!dshade
  \repeat
  \ignorespaces}

\def\!qgetylimits{%
  \!dimenA=\!dx\!ytC
  \advance\!dimenA \!ytB%         ** yt(t)=ytS + dx*(Bt + dx*Ct)
  \!ytpos=\!dx\!dimenA
  \advance\!ytpos \!ytS
  \!dimenA=\!dx\!ybC
  \advance\!dimenA \!ybB%         ** yb(t)=ybS + dx*(Bb + dx*Cb)
  \!ybpos=\!dx\!dimenA
  \advance\!ybpos \!ybS}

\def\!lgetylimits{%
  \!ytpos=\!dx\!ytB%              ** yt(t)=ytS + dx*Bt
  \advance\!ytpos \!ytS
  \!ybpos=\!dx\!ybB%              ** yb(t)=ybS + dx*Bb
  \advance\!ybpos \!ybS}

\def\!vsetshadelocation{%         ** vmode: xloc=xpos, yloc=ypos
  \!xloc=\!xpos
  \!yloc=\!ypos}
\def\!hsetshadelocation{%         ** hmode: xloc=ypos, yloc=xpos
  \!xloc=\!ypos
  \!yloc=\!xpos}

% **************************************
% *** TICKS  (Draws ticks on graphs) ***
% **************************************

% ** User commands
% **   \ticksout
% **   \ticksin
% **   \gridlines
% **   \nogridlines
% **   \loggedticks
% **   \unloggesticks
% ** See Subsection 3.4 of the manual

% ** The following is an option of the \axis command
% **   ticks
% **     [in] [out]
% **     [long] [short] [length <LENGTH>]
% **     [width <WIDTH>]
% **     [andacross] [butnotacross]
% **     [logged] [unlogged]
% **     [unlabeled] [numbered] [withvalues VALUE1 VALUE2 ... VALUEk / ]
% **     [quantity Q] [at LOC1 LOC2 ... LOCk / ] [from LOC1 to LOC2 by
% **       LOC_INCREMENT]
% ** See Subsection 3.2 of the manual for the rules.

% ** The various options of the  tick  field are processed by the
% ** \!nextkeyword  command defined below.
% ** For example, `\!nextkeyword short '  expands to  `\!ticksshort',
% ** while `\!nextkeyword withvalues' expands to `\!tickswithvalues'.

\def\!axisticks {%
  \def\!nextkeyword##1 {%
    \expandafter\ifx\csname !ticks##1\endcsname \relax
      \def\!next{\!fixkeyword{##1}}%
    \else
      \def\!next{\csname !ticks##1\endcsname}%
    \fi
    \!next}%
  \!axissetup
    \def\!axissetup{\relax}%
  \edef\!ticksinoutsign{\!ticksinoutSign}%
  \!ticklength=\longticklength
  \!tickwidth=\linethickness
  \!gridlinestatus
  \!setticktransform
  \!maketick
  \!tickcase=0
  \def\!LTlist{}%
  \!nextkeyword}

\def\ticksout{%
  \def\!ticksinoutSign{+}}

\ticksout

\def\nogridlines{%
  \def\!gridlinestatus{\!gridlinestoofalse}}
\nogridlines

\def\loggedticks{%
  \def\!setticktransform{\let\!ticktransform=\!logten}}
\def\unloggedticks{%
  \def\!setticktransform{\let\!ticktransform=\!donothing}}
\def\!donothing#1#2{\def#2{#1}}
\unloggedticks

% ** \!ticks/ : terminates read of tick options
\expandafter\def\csname !ticks/\endcsname{%
  \!not {\ifx \!LTlist\empty}
    \!placetickvalues
  \fi
  \def\!tickvalueslist{}%
  \def\!LTlist{}%
  \expandafter\csname !axis/\endcsname}

\def\!maketick{%
  \setbox\!boxA=\hbox{%
    \beginpicture
      \!setdimenmode
      \setcoordinatesystem point at {\!zpt} {\!zpt}
      \linethickness=\!tickwidth
      \ifdim\!ticklength>\!zpt
        \putrule from {\!zpt} {\!zpt} to
          {\!ticksinoutsign\!tickxsign\!ticklength}
          {\!ticksinoutsign\!tickysign\!ticklength}
      \fi
      \if!gridlinestoo
        \putrule from {\!zpt} {\!zpt} to
          {-\!tickxsign\!xaxislength} {-\!tickysign\!yaxislength}
      \fi
    \endpicturesave <\!Xsave,\!Ysave>}%
    \wd\!boxA=\!zpt}

\def\!ticksin{%
  \def\!ticksinoutsign{-}%
  \!maketick
  \!nextkeyword}

\def\!ticksout{%
  \def\!ticksinoutsign{+}%
  \!maketick
  \!nextkeyword}

\def\!tickslength<#1> {%
  \!ticklength=#1\relax
  \!maketick
  \!nextkeyword}

\def\!tickslong{%
  \!tickslength<\longticklength> }

\def\!ticksshort{%
  \!tickslength<\shortticklength> }

\def\!tickswidth<#1> {%
  \!tickwidth=#1\relax
  \!maketick
  \!nextkeyword}

\def\!ticksandacross{%
  \!gridlinestootrue
  \!maketick
  \!nextkeyword}

\def\!ticksbutnotacross{%
  \!gridlinestoofalse
  \!maketick
  \!nextkeyword}

\def\!tickslogged{%
  \let\!ticktransform=\!logten
  \!nextkeyword}

\def\!ticksunlogged{%
  \let\!ticktransform=\!donothing
  \!nextkeyword}

\def\!ticksunlabeled{%
  \!tickcase=0
  \!nextkeyword}

\def\!ticksnumbered{%
  \!tickcase=1
  \!nextkeyword}

\def\!tickswithvalues#1/ {%
  \edef\!tickvalueslist{#1! /}%
  \!tickcase=2
  \!nextkeyword}

\def\!ticksquantity#1 {%
  \ifnum #1>1
    \!updatetickoffset
    \!countA=#1\relax
    \advance \!countA -1
    \!ticklocationincr=\!axisLength
      \divide \!ticklocationincr \!countA
    \!ticklocation=\!axisstart
    \loop \!not{\ifdim \!ticklocation>\!axisend}
      \!placetick\!ticklocation
      \ifcase\!tickcase
          \relax %  Case 0: no labels
        \or
          \relax %  Case 1: numbered -- not available here
        \or
          \expandafter\!gettickvaluefrom\!tickvalueslist
          \edef\!tickfield{{\the\!ticklocation}{\!value}}%
          \expandafter\!listaddon\expandafter{\!tickfield}\!LTlist%
      \fi
      \advance \!ticklocation \!ticklocationincr
    \repeat
  \fi
  \!nextkeyword}

\def\!ticksat#1 {%
  \!updatetickoffset
  \edef\!Loc{#1}%
  \if /\!Loc
    \def\next{\!nextkeyword}%
  \else
    \!ticksincommon
    \def\next{\!ticksat}%
  \fi
  \next}

\def\!ticksfrom#1 to #2 by #3 {%
  \!updatetickoffset
  \edef\!arg{#3}%
  \expandafter\!separate\!arg\!nil
  \!scalefactor=1
  \expandafter\!countfigures\!arg/
  \edef\!arg{#1}%
  \!scaleup\!arg by\!scalefactor to\!countE
  \edef\!arg{#2}%
  \!scaleup\!arg by\!scalefactor to\!countF
  \edef\!arg{#3}%
  \!scaleup\!arg by\!scalefactor to\!countG
  \loop \!not{\ifnum\!countE>\!countF}
    \ifnum\!scalefactor=1
      \edef\!Loc{\the\!countE}%
    \else
      \!scaledown\!countE by\!scalefactor to\!Loc
    \fi
    \!ticksincommon
    \advance \!countE \!countG
  \repeat
  \!nextkeyword}

\def\!updatetickoffset{%
  \!dimenA=\!ticksinoutsign\!ticklength
  \ifdim \!dimenA>\!offset
    \!offset=\!dimenA
  \fi}

\def\!placetick#1{%
  \if!xswitch
    \!xpos=#1\relax
    \!ypos=\!axisylevel
  \else
    \!xpos=\!axisxlevel
    \!ypos=#1\relax
  \fi
  \advance\!xpos \!Xsave
  \advance\!ypos \!Ysave
  \kern\!xpos\raise\!ypos\copy\!boxA\kern-\!xpos
  \ignorespaces}

\def\!gettickvaluefrom#1 #2 /{%
  \edef\!value{#1}%
  \edef\!tickvalueslist{#2 /}%
  \ifx \!tickvalueslist\!endtickvaluelist
    \!tickcase=0
  \fi}
\def\!endtickvaluelist{! /}

\def\!ticksincommon{%
  \!ticktransform\!Loc\!t
  \!ticklocation=\!t\!!unit
  \advance\!ticklocation -\!!origin
  \!placetick\!ticklocation
  \ifcase\!tickcase
    \relax % Case 0: no labels
  \or %      Case 1: numbered
    \ifdim\!ticklocation<-\!!origin
      \edef\!Loc{$\!Loc$}%
    \fi
    \edef\!tickfield{{\the\!ticklocation}{\!Loc}}%
    \expandafter\!listaddon\expandafter{\!tickfield}\!LTlist%
  \or %      Case 2: labeled
    \expandafter\!gettickvaluefrom\!tickvalueslist
    \edef\!tickfield{{\the\!ticklocation}{\!value}}%
    \expandafter\!listaddon\expandafter{\!tickfield}\!LTlist%
  \fi}

\def\!separate#1\!nil{%
  \!ifnextchar{-}{\!!separate}{\!!!separate}#1\!nil}
\def\!!separate-#1\!nil{%
  \def\!sign{-}%
  \!!!!separate#1..\!nil}
\def\!!!separate#1\!nil{%
  \def\!sign{+}%
  \!!!!separate#1..\!nil}
\def\!!!!separate#1.#2.#3\!nil{%
  \def\!arg{#1}%
  \ifx\!arg\!empty
    \!countA=0
  \else
    \!countA=\!arg
  \fi
  \def\!arg{#2}%
  \ifx\!arg\!empty
    \!countB=0
  \else
    \!countB=\!arg
  \fi}

\def\!countfigures#1{%
  \if #1/%
    \def\!next{\ignorespaces}%
  \else
    \multiply\!scalefactor 10
    \def\!next{\!countfigures}%
  \fi
  \!next}

\def\!scaleup#1by#2to#3{%
  \expandafter\!separate#1\!nil
  \multiply\!countA #2\relax
  \advance\!countA \!countB
  \if -\!sign
    \!countA=-\!countA
  \fi
  #3=\!countA
  \ignorespaces}

\def\!scaledown#1by#2to#3{%
  \!countA=#1\relax%                          ** get original #
  \ifnum \!countA<0 %                         ** take abs value,
    \def\!sign{-}%                            **   remember sign
    \!countA=-\!countA
  \else
    \def\!sign{}%
  \fi
  \!countB=\!countA%                          ** copy |#|
  \divide\!countB #2\relax%                   ** integer part (|#|/sf)
  \!countC=\!countB%                          ** get sf * (|#|/sf)
    \multiply\!countC #2\relax
  \advance \!countA -\!countC%                ** ctA is now remainder
  \edef#3{\!sign\the\!countB.}%               ** +- integerpart.
  \!countC=\!countA %                         ** Tack on proper number
  \ifnum\!countC=0 %                          **   of zeros after .
    \!countC=1
  \fi
  \multiply\!countC 10
  \!loop \ifnum #2>\!countC
    \edef#3{#3\!zero}%
    \multiply\!countC 10
  \repeat
  \edef#3{#3\the\!countA}%                    ** Add on rest of remainder
  \ignorespaces}

\def\!placetickvalues{%
  \advance\!offset \tickstovaluesleading
  \if!xswitch
    \setbox\!boxA=\hbox{%
      \def\\##1##2{%
        \!dimenput {##2} [B] (##1,\!axisylevel)}%
      \beginpicture
        \!LTlist
      \endpicturesave <\!Xsave,\!Ysave>}%
    \!dimenA=\!axisylevel
      \advance\!dimenA -\!Ysave
      \advance\!dimenA \!tickysign\!offset
      \if -\!tickysign
        \advance\!dimenA -\ht\!boxA
      \else
        \advance\!dimenA  \dp\!boxA
      \fi
    \advance\!offset \ht\!boxA
      \advance\!offset \dp\!boxA
    \!dimenput {\box\!boxA} [Bl] <\!Xsave,\!Ysave> (\!zpt,\!dimenA)
  \else
    \setbox\!boxA=\hbox{%
      \def\\##1##2{%
        \!dimenput {##2} [r] (\!axisxlevel,##1)}%
      \beginpicture
        \!LTlist
      \endpicturesave <\!Xsave,\!Ysave>}%
    \!dimenA=\!axisxlevel
      \advance\!dimenA -\!Xsave
      \advance\!dimenA \!tickxsign\!offset
      \if -\!tickxsign
        \advance\!dimenA -\wd\!boxA
      \fi
    \advance\!offset \wd\!boxA
    \!dimenput {\box\!boxA} [Bl] <\!Xsave,\!Ysave> (\!dimenA,\!zpt)
  \fi}

\normalgraphs
\catcode`!=12 %  *****  THIS MUST NEVER BE OMITTED

% This is postpictex.tex  Version 1.1  9/10/87

% To use the PiCTeX macros under LaTeX, you first need to \input the
% file prepictex.tex, then the main corpus of PiCTeX macros (pictex.tex), 
% and finally this file.  Do not \input the file latexpicobjs.tex.
 
\catcode`@=11 \catcode`!=11
  
% Save meanings of PiCTeX keywords that duplicate LaTeX keywords
\let\!pictexendpicture=\endpicture 
\let\!pictexframe=\frame
\let\!pictexlinethickness=\linethickness
\let\!pictexmultiput=\multiput
\let\!pictexput=\put

% Redefine the PiCTeX \beginpicture macro
\def\beginpicture{%
  \setbox\!picbox=\hbox\bgroup%
  \let\endpicture=\!pictexendpicture
  \let\frame=\!pictexframe
  \let\linethickness=\!pictexlinethickness
  \let\multiput=\!pictexmultiput
  \let\put=\!pictexput
  \let\input=\@@input   % \@@input is LaTeX's saved version of TeX's primitive
  \!xleft=\maxdimen  
  \!xright=-\maxdimen
  \!ybot=\maxdimen
  \!ytop=-\maxdimen}

% Reestablish LaTeX's meaning of \frame. This makes
% PiCTeX's meaning of \frame available only inside a PiCture.
\let\frame=\!latexframe

% Make PiCTeX's meaning of \frame available everywhere in the
% guise of \pictexframe
\let\pictexframe=\!pictexframe

% Now do the same for \linethickness
\let\linethickness=\!latexlinethickness
\let\pictexlinethickness=\!pictexlinethickness

% Reset LaTeX's default meaning of \\
\let\\=\@normalcr
\catcode`@=12 \catcode`!=12

%%%%%%%%%%%%%%%%%%%%%%%%%%%%%%%%%%%%%%%%%%%%%%%%%%%%%%%%%%%%%%%%%%%%%%%%%%%%%%%%%%%%%%%

\title[Embedded Resolution of Surface Singularities in Positive Characteristic]{\large{Alternative Invariants for the Embedded Resolution of Purely Inseparable Surface Singularities}}
\author{H. HAUSER and D. WAGNER}
\thanks{Partially supported by projects 
P18992 and P21461 of the Austrian Science Fund FWF, by the Austrian-Spanish
cooperation program Acciones Integradas, by the University of Innsbruck and by a grant from L'Or\'eal Austria, the Austrian Commission for UNESCO and the Austrian Academy of Sciences.}
\date{\today}

\begin{document}

\maketitle

%%%%%%%%%%%%%%%%%%%%%%%%%%%%%%%%%%%%%%%%%%%%%%%%%%%%%%%%%%%%%%%%%%%%%%%%%%%%%%%%%%%%%%%

\begin{abstract}
\noindent We propose two local invariants for the inductive proof of the embedded resolution of purely inseparable surface singularities of order equal to the characteristic. The invariants are built on an detailed analysis of the so called kangaroo phenomenon in positive characteristic. They thus measure accurately the algebraic complexity of an equation defining a surface singularity in characteristic $p$. As the invariants are shown to drop after each blowup, induction applies. \end{abstract}

%%%%%%%%%%%%%%%%%%%%%%%%%%%%%%%%%%%%%%%%%%%%%%%%%%%%%%%%%%%%%%%%%%%%%%%%%%%%%%%%%%%%%%%
%%%%%%%%%%%%%%%%%%%%%%%%%%%%%%%%%%%%%%%%%%%%%%%%%%%%%%%%%%%%%%%%%%%%%%%%%%%%%%%%%%%%%%%

{ \small
\tableofcontents }

%%%%%%%%%%%%%%%%%%%%%%%%%%%%%%%%%%%%%%%%%%%%%%%%%%%%%%%%%%%%%%%%%%%%%%%%%%%%%%%%%%%%%%%

\section{Introduction} \label{aim}
$ $ \\
\noindent  In this paper, two alternative invariants for the embedded resolution of two-dimensional hypersurface singularities in arbitrary characteristic are constructed. The first invariant is built on the now classical invariant from characteristic zero, consisting of a string of integers given by the local order of the defining equation and of the orders of the subsequent coefficient ideals (after having removed the exceptional factors). As hypersurfaces of maximal contact need not exist in positive characteristic, these orders have to be defined in a different way to make them intrinsic. The correct choice is the {\it maximum} of the order of the coefficient ideal over {\it all} choices of local regular hypersurfaces. The orders are thus well defined, i.e., independent of any choices.

\ind By examples of Moh it is known that this invariant may increase under blowup with respect to the lexicographical order \cite{MR935710, MR1395176}. Actually, its second component, the order of the first coefficient ideal, may increase at points where the first component has remained constant. The increase occurs at so called {\it kangaroo points} (in Hauser's terminology; they are called {\it metastatic points} by Hironaka). Moh was able to bound the possible increase from above, and Hauser gave a complete classification of kangaroo points  \cite{Hausera, HauserBull}.

\ind Relying on these results, we show in the present paper (for purely inseparable two-dimen\-sional hypersurfaces of order equal to the characteristic) that the sporadic increase of the invariant is dominated by larger decreases before or after the critical blowup. It thus decreases in the long run. Actually, to smooth the argument and to avoid considering packages of blowups, we subtract from the second component of the invariant in very specific situations a {\it bonus} 
(a real number taking values $0$, $\varepsilon$, $\delta$ or $1+\delta$ with $0<\varepsilon<\delta<1$). 
 
\ind This bonus is modeled so that the modified invariant decreases under {\it every} blowup (see Theorem \ref{Theo1}). It thus interpolates the ``graph'' of the original characteristic zero invariant by a monotonously decreasing function (see figure \ref{diagram_modi}).

 	\begin{figure}[h]
	\begin{center}
	\includegraphics[scale=0.45]{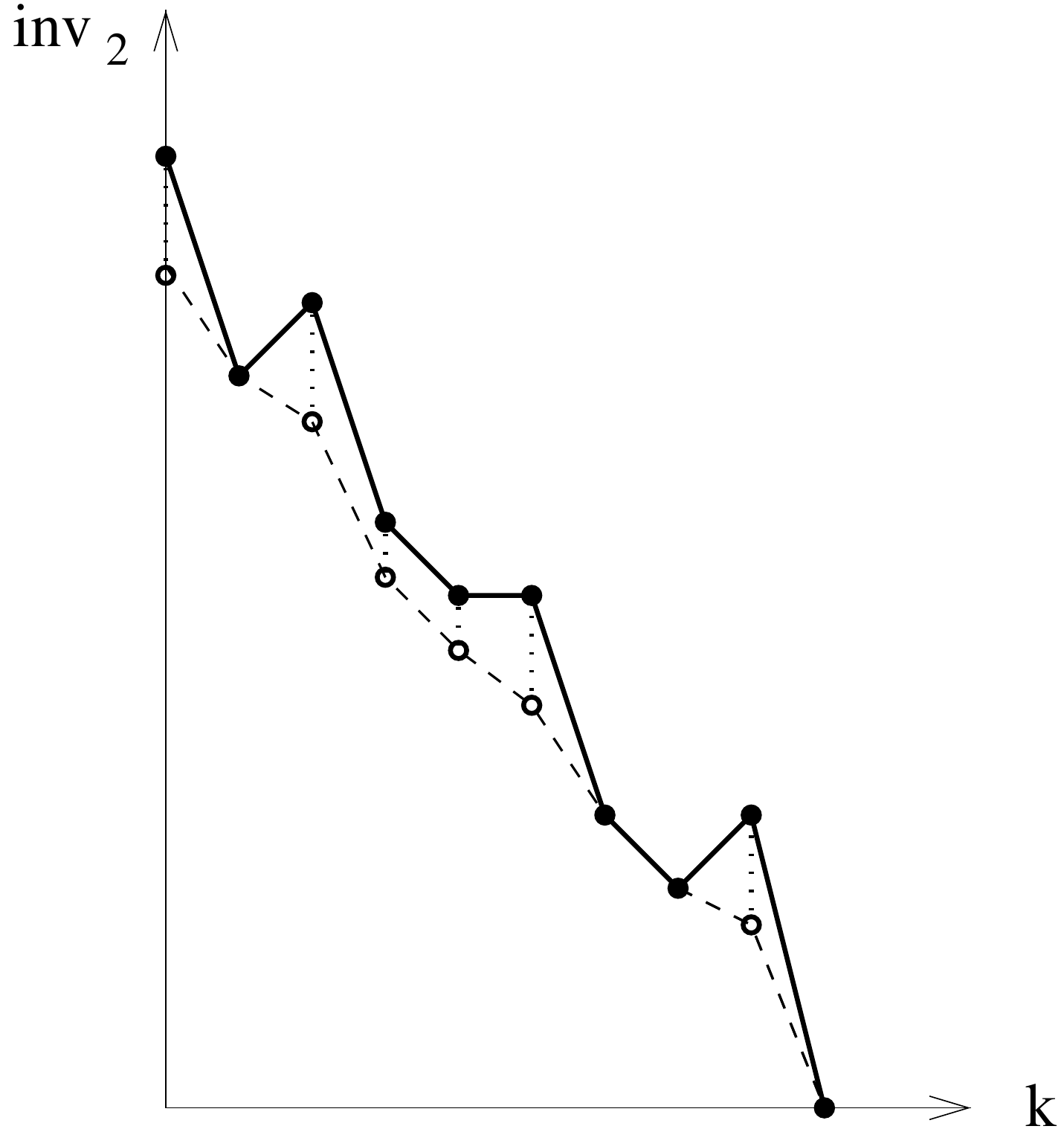}   
	\\ 
	[1ex]  \begin{minipage}{12cm} 
       \begin{abbildung}\label{diagram_modi} \begin{center}
       Modification of the classical invariant (solid line) by the bonus (dashed);  vertically the 2nd component of the invariant, horizontally the number of blowups.
        \\ \end{center} \end{abbildung} 
      \end{minipage}
	\end{center}
	\end{figure}

\noindent Our second invariant is built on a different measure, the {\it height}. This is a natural number which counts in an asymmetric way the distance of a hypersurface singularity from being a normal crossings divisor. The symmetry is broken by the consideration of local flags which accompany the resolution process. They reduce the necessary coordinate changes to a ``Borel'' subgroup of the local formal automorphism group of the ambient scheme: the changes are {\it triangular} in a precise sense. This, in turn, allows us to define the height as a minimum over all coordinate choices subordinate to the flag. Moreover, the local blowups given by choosing an arbitrary point in the exceptional divisor can be made monomial after applying at the base point below a suitable linear triangular coordinate change belonging to the subgroup. Combining these techniques one obtains an explicit control on the behavior of the height under blowup.  

\ind After completion of the present paper we became aware through Cutkosky's recent preprint \cite{Cutkosky-on-Abhyankar} that an invariant similar to the height had already been considered by Abhyankar in a series of papers from the sixties, see e.g. \cite{Abhyankar-nonsplitting}. His more valuation theoretic approach is very complicated. Cutkosky simplifies considerably Abhyankar's constructions and thus achieves a lucid exposition of the induction argument. For a comparison of the invariants, see in particular definition 7.3 of \cite{Cutkosky-on-Abhyankar}.

\ind Experimentation shows that the height may also increase under blowup, as was the case for the order of the coefficient ideal. But Moh's bound applies again. In fact, the bonus which has to be subtracted to make the resulting invariant always drop is now much easier to define than before. It is $0$, $\varepsilon$ or $1+\delta$ according to the situation, with $0<\varepsilon<\delta <1$. As a consequence we can show quite directly that the vector of (modified) heights (of the subsequent coefficient ideals) drops  lexicographically under blowup 
(again in the case of purely inseparable two-dimensional hypersurface singularities of order equal to the characteristic). 

\ind Both types of invariants as well as the respective definitions of the bonus provide a quite concise approach to the resolution of surface singularities. They thus form a substitute for Hironaka's invariant from the Bowdoin lectures \cite{Hironaka}, which is central in the recent works in positive characteristic of Cossart-Jannsen-Saito \cite{CJS} on the embedded resolution of surfaces of arbitrary codimension and of Cutkosky \cite{Cutkosky_3-Folds} and Cossart-Piltant \cite{Cos1,Cos2} on the non-embedded resolution of three-dimensional varieties. All these proofs use Hironaka's invariant for surfaces. See also \cite{FaberHauser} for many concrete examples. A different approach to the resolution of surface singularities has been proposed by Benito-Villamayor, which was then simplified by Kawanoue-Matsuki \cite{Benito_Villamayor_Surfaces, Kawanoue_Matsuki_Surfaces}.

\ind It is appropriate to compare the invariants proposed in this article with Hironaka's. All three can be defined through the Newton polyhedron of the singularity. They are made intrinsic by astute choices of local coordinates, and thus serve as genuine measures of the complexity of the singularity, not depending on any casual instance or choice. \\

\noindent {\it Advantages of the invariants}: (1) They are very natural and easy to handle. \rem{This ensures their upper-semicontinuity required for the induction argument. } (2) Their construction is systematic. This permits us to investigate possible extensions to higher dimensions (though there are then various options of how to design them).  (3) They do not increase even if the center was chosen too small (i.e., a point instead of a curve). This is not the case with Hironaka's invariant which requires to blow up in a center of maximal possible dimension. In contrast, for our invariants, the centers of blowup will always be a collection of isolated points, except if the first coefficient ideal is a monomial (the $\nu$-quasi-ordinary case; this is a purely combinatorial situation). (4) The symmetry break in the definition of the second invariant may result fertile in the future. The proofs show that this is an efficient way to control blowups. It is built on the asymmetric decomposition of projective space (typically, the exceptional divisor of a point blowup) by affine spaces of decreasing dimensions. We thus partition the exceptional divisor by locally closed subsets instead of covering it by open affine subsets. The flags take into account this decomposition. (5) The bonus is based on a detailed analysis of the kangaroo phenomenon. The increase of the not yet modified invariant \`a la Moh under blowup can be shown to come along with a complementary improvement of the Newton polyhedron: It approaches a coordinate axis. Exploiting this incidence, first observed by Dominik Zeillinger in his thesis \cite{Zeillinger}, the definition of the bonus comes quite automatically. (6) The proofs that the (modified) invariant drops are completely straightforward and thus -- at least in principle -- extendable to higher dimension. \\

\noindent {\it Drawbacks of the invariants}: 
(7) The maximal order of the coefficient ideal over all coordinate changes, called here the {\it \dorder} of the singularity (which coincides  in the purely inseparable case with the {\it residual order} of Hironaka), is not upper-semicontinuous when considering non-closed points. Hironaka calls this phenomenon {\it generic going up}. It causes technical complications in higher dimensions.  (8) The introduction of the bonus is not completely satisfactory. It ensures that the modi\-fied invariant drops after each blowup, but its definition could be more conceptual, avoiding case distinctions. 
 (9) The extension of the results and techniques to the embedded reso\-lution of threefolds -- this is known to be the critical case for positive characteristic -- is not obvious. There seem to appear additional complications which are not entirely understood yet. 

\ind Our exposition concentrates on purely inseparable hypersurface singularities of order $p$ (which represent the first significant case.)  If the order is a larger $p$-th power $p^k$, $k\geq 2$, the reasoning becomes more complicated. For instance, the bonus (defined in  section 4) has to be modified from $1+\delta$ to $p^{k-1} +\delta$. A similar argument as developed in this paper shows that a sequence of point blowups reduces to the case where the width (see section 4) has become $\leq p^{k-1}$. This case, however, seems to be much more intricate than the case $k=1$ where the width is just $1$. It is the subject of ongoing research.\\

%Namely, a sequence of appropriate curve blowups allows us to make the order of the singularity drop below $p^k$, and induction applies.   \\
 
%\noindent The study of alternative invariants as for instance the two proposed in this paper will gradually deepen our understanding of resolution in positive characteristic.  In this attempt one has to switch permanently  between a close analysis of the specific phenomena and a remote perspective capturing the overall  argument. In this spirit, our exposition aims to be elementary and concrete while being as systematic and conceptual as possible. \\

\noindent {\bf Acknowledgements. } The authors wish to express their thanks to Dominik Zeillinger for sharing generously his ideas. Thanks to Tobias Beck, Clemens Bruschek, Santiago Encinas, Daniel Panazzolo, Georg Regensburger, Dale Cutkosky, Josef Schicho and Hiraku Kawanoue for several helpful comments and many stimulating conversations.
\\

%%%%%%%%%%%%%%%%%%%%%%%%%%%%%%%%%%%%%%%%%%%%%%%%%%%%%%%%%%%%%%%%%%%%%%%%%%%%%%%%%%%%%%%
%%%%%%%%%%%%%%%%%%%%%%%%%%%%%%%%%%%%%%%%%%%%%%%%%%%%%%%%%%%%%%%%%%%%%%%%%%%%%%%%%%%%%%%

\section{Context} \label{context}
$ $ \\
\noindent Hironaka's proof of resolution of singularities in characteristic zero in \cite{MR0199184} is built on induction on the dimension of the ambient space. This descent in dimension persists as the key argument also in the later simplifications of Hironaka's proof by Villamayor, Bierstone-Milman, Encinas-Hauser, Bravo-Encinas-Villamayor, W\l odarczyk, Koll\'ar \cite{MR985852, MR1198092, MR1440306, MR1949115, MR2174912, MR2163383, MR2289519}: To an ideal sheaf $\I$ in an $n$-dimensional, smooth ambient scheme $W$ one associates locally at each point $a$ of $W$ 
%(or at least at each point of a suitable stratum of $\I$ in $W$, usually the top locus $\textnormal{top}(\I)$ of $\I$ consisting of those points where the local order of $\I$ attains its maximal value) 
a smooth hypersurface $V$ of $W$ through $a$ and an ideal sheaf $\JJ$ in $V$, the coefficient ideal of $\I$ in $V$ at $a$, which translates the resolution problem for $\I$ in $W$ at $a$ into a resolution problem of $\JJ$ in $V$. Once $\JJ$ is resolved -- this can be assumed to be feasable by induction on the dimenson $n$ -- there is a relatively simple combinatorial procedure to also resolve $\I$. 

Let us recall here that there exist various proofs for (embedded, respectively non-embedded) resolution of surfaces in arbitrary characteristic. Abhyankar's thesis \cite{Ab8} from 1956, Lipman's proof in \cite{Lipman} via pseudo-rational singularities for arbitrary 2-dimensional excellent schemes (but dispensing of embeddedness), and Hironaka's proof from his Bowdoin lectures \cite{Hironaka}, where an invariant is constructed from the Newton polyhedron of a hypersurface. This proof is used in the recent work of Cossart-Jannsen-Saito \cite{CJS} on embedded resolution of two-dimensional schemes, Cutkosky's compact writeup \cite{Cutkosky-on-Abhyankar} of Abhyankar's scattered proof of non-embedded resolution for threefolds in positive characteristic $> 5$ (hypersurface case), and the papers \cite{Cos1, Cos2} of Cossart and Piltant, where the result is established with considerably more effort for arbitrary reduced three-dimensional schemes defined over a field of positive characteristic which is differentially finite over a perfect subfield.   

Moreover lately there have been new developments in the area of resolution of singularities of algebraic varieties of any dimension over fields of positive characteristic. 
For instance, several promising new approaches and programs
have been presented during the conference ``On the Resolution of Singularities'' at RIMS Kyoto in December 2008: In \cite{MR1996845, HironakaTrieste, HironakaClay} Hironaka studies differential operators in arbitrary characteristic in order to construct generalizations of hypersurfaces of maximal contact. The main 
difficulty is thus reduced to the purely inseparable case and kangaroo/metastatic points. 
Hironaka then asserts that this type of singularities can 
be resolved directly \cite{HironakaRIMS, HironakaTordesillas}. There is no written proof of this available yet.
Further Kawanoue and Matsuki have published a program for arbitrary dimension and characteristic \cite{Hiraku1, Kawanoue_Matsuki}. Again differential operators are used to define a suitable resolution invariant. The termination of the resulting algorithm seems not to be ensured yet. Additionally there is a novel approach to resolution by Villamayor and his collaborators Benito, Bravo and Encinas \cite{EV,Bravo_Villamayor_10,Benito}.   
It is based on projections instead of restrictions for the descent in dimension. A substitute for coefficient ideals is constructed 
via Rees algebras and differential operators, called elimination algebras. It provides a new 
resolution invariant for characteristic $p$ (which coincides with the classical one in characteristic zero).
This allows one to reduce to a so 
called monomial case (which, however, seems to be still unsolved, and could be much more 
involved than the classical monomial case). 

A more axiomatic approach to resolution has been proposed by Hauser and Schicho \cite{HauSchi}: The various specific constructions of the classical proof in characteristic zero are replaced by their key properties. These in turn suffice to give a purely combinatorial description of the entire resolution argument in form of a game (a viewpoint which orignally goes back to Hironaka). To get a complete proof of resolution then one only has to show, and this is done by elementary algebra, that objects with the required properties do exist.

In the course of Hironaka's reasoning of resolution of singularities in characteristic zero it is crucial that the local descent in dimension commutes with blowups in admissible centers (= smooth centers contained in $\ttop(\I)$) at all points of the exceptional divisor $Y'$ where the local order of $\I$ has remained constant. More explicitly, this signifies that the coefficient ideal of the weak transform $\I^\weak$ of $\I$ at a point $a'$ of $Y'$ where the order of $\I$ has remained constant equals the (controlled) transform of the coefficient ideal of $\I$ at $a$ (for the involved notions of coefficient ideal, weak and controlled transforms, see \cite{MR1949115}).

The commutativity of the local descent to coefficient ideals with blowups is essential for proving that -- always in characteristic zero --  the order of the coefficient ideal $\JJ$ of $\I$ does not increase at points where the order of $\I$ has remained constant. (It is easy to see, using that the center is contained in $\ttop(\I)$, that the order of $\I$ itself cannot increase.) Therefore the pair $(\textnormal{ord}_a(\I),\textnormal{ord}_a(\JJ))$ does not increase under blowup when considered with respect to the lexicographic order.

The clue for this to work is the existence of hypersurfaces of maximal contact in characteristic zero. They are special choices of hypersurfaces $V$ containing locally $\ttop(\I)$ at $a$ and ensuring that the strict transform $V^{st}$ of $V$ contains again the top locus of the weak transform $\I^\weak$ of $\I$, provided the maximum value of the local orders of $\I$ has remained constant. Moreover, it is required that this property persists for $\I^\weak$ and $V^{st}$ under any further admissible blowup. In particular, the various transforms of $V$ contain all equiconstant points, i.e., points of the subsequent exceptional loci where the local order of the transforms has remained constant (at the other points, induction on the order applies).

This argument fails in positive characteristic. There are ideals in characteristic $p>0$ (first given by Narasimhan in \cite{MR684627} and \cite{MR715853}, then also studied by Mulay \cite{MR715854}), whose top locus is not locally contained in any smooth hypersurface. Consequently, when just taking any smooth hypersurface through the point $a$, its transforms under blowups eventually lose the equiconstant points of $\I$ (see \cite{Hausera} for the reason for this and a selection of examples). Hence the induction on the dimension breaks down in a first instance, because the descent in dimension does no longer commute with blowups in the above way.

In an attempt to overcome this flaw, one could choose after each blowup locally at equiconstant points $a'$ of the exceptional locus $Y'$ a new local hypersurface $V'$ (instead of the transform $V^{st}$ of $V$) and try to compare the resulting coefficient ideal with the one below in $V$. In trying to do this, one has to choose carefully the hypersurfaces $V$ and $V'$. The first should have transform $V^{st}$ containing all equiconstant points $a'$ in $Y'$ (for reasons not apparent at the moment), so that only a {\it local} automorphism at $a'$ is necessary to obtain $V'$ from $V^{st}$. Moreover, $V'$ should have the same property as $V$ -- but again only for the next blowup, not for all subsequent ones.

Additionally, a second condition is imposed on $V$. It is related to the construction of the resolution invariant. Usually, this invariant is a vector whose entries are the local orders of certain ideals: The first component is the order of $\I$ at $a$, the second the order of the coefficient ideal $\JJ$ of $\I$ at $a$ in $V$ (after having factored from it possible exceptional components). But this second order may depend on the choice of $V$, and we are better led to choose only such $V$ for which the order of the coefficient ideal takes an intrinsic value. 

In characteristic zero, another coincidence occurs. Hypersurfaces of maximal contact maximize the order of the coefficient ideal over all choices of local, smooth hypersurfaces. Thus, this order is intrinsic. In \cite{MR2163383}, W\l odarczyk introduced a version of coefficient ideal whose analytic isomorphism class does not depend on $V$, so that its local order is automatically intrinsic. The maximality leads naturally to the notion of {\it weak maximal contact}, which was introduced in \cite{MR1949115}: The local, smooth hypersurface $V$ through $a$ has weak maximal contact with $\I$ if the order of the coefficient ideal $\JJ$ of $\I$ in $V$ is maximized over all smooth local hypersurfaces. This notion depends, of course, on the selected definition of coefficient ideal.

Maximality of orders can be traced back in many papers, and was especially for Abhyankar a decisive requirement \cite{MR713043}. He achieved it in characteristic zero by so called Tschirnhaus transformations, an algebraic construction of local coordinate changes yielding hypersurfaces slightly stronger than hypersurfaces of maximal contact (the resulting hypersurfaces are called {\it osculating} in \cite{MR1949115}). \\

%-----------------------------------------------------------------------------------------------------%-----------------------------------------------------------------------------------------------------%-----------------------------------------------------------------------------------------------------

\section{Results} \label{results}
$ $ \\
\noindent The present paper originates from the observations indicated in section \ref{context}. It exhibits, still for surfaces, but with the perspective of application to higher dimensional schemes, a characteristic free approach to hypersurfaces of weak maximal contact and their related coefficient ideals. It was observed by Moh in \cite{MR935710} and \cite{MR1395176} that the order of the coefficient ideal of an ideal sheaf in a hypersurface of weak maximal contact may indeed increase in characteristic $p>0$ -- this was already known to Abhyankar \cite{Abhyankar-nonsplitting, Cutkosky-on-Abhyankar} -- and he was able to bound the increase. And in fact, the increase is small. If $\I$ is a principal ideal of order $p$ (the characteristic) at a given point, the increase of the order of the coefficient ideal is at most $1$ (always considered at equiconstant points of $\I$ in $Y'$, the only points of interest). This is not too bad, but, conversely, sufficient to destroy any kind of naive induction.

In the present paper we investigate this increase closer in the case of surfaces. It is known from Hauser's work that an increase can occur only rarely, and that the situations where an increase happens are very special and can be completely characterized \cite{Hausera}. However, it cannot be excluded that the increase repeats an infinite number of times. This would not rule out the existence of resolution in positive characteristic, but it would show that the characteristic zero resolution invariant formed by the orders of the successive coefficient ideals cannot be used directly in characteristic $p$. 

The point of the present paper is that, at least for surfaces, the same resolution invariant as in characteristic zero can be used also in characteristic $p$. It suffices to modify it slightly in some very specific circumstances to make it work again. The trick lies in subtracting occasionally a {\it \bonus} from the invariant. This is a correction term (taking values $1+\delta$, $\delta$, $\varepsilon$ or $0$ for  once chosen constants $0< \varepsilon <\delta <1$) which makes the modified invariant drop lexicographically after {\it each} blowup  --with a few exceptions, so called quasi-monomials, where a direct resolution of the surface can be given (a quasi-monomial occurs if the coefficient ideal is a product of the monomial exceptional factor with a polynomial of order $1$ at the given point, see section 4 for the precise definition).

The classical resolution invariant -- consisting of orders of successive coefficient ideals -- and its modification will be treated in section \ref{invariant2} of this paper. 
Instead we will define and work primarily with a new resolution invariant which was constructed in the thesis \cite{Zeillinger} of Zeillinger. As in the case of the classical characteristic zero resolution invariant, its 
components are related to the successive coefficient ideals. But instead of measuring the respective 
orders, we will associate to each of these ideals a certain ``height''. 
It measures in an asymmetric manner the distance of a hypersurface singularity from being a normal crossings divisor. We prefer this new resolution invariant because its correction term is easier to define and the induction argument becomes simpler. 

The critical case is the purely inseparable equation $$G=x^p+F(y,z)$$ with $\textnormal{ord}(G)=p$. 
The present paper, therefore,
concentrates on this situation. This smoothes the exposition, and avoids technical complications which occur if one wants to extend the argument
to arbitrary equations of surfaces (one would have to work with
coefficient ideals as defined in \cite{MR1949115, Cut1}).
Coefficient ideals correspond geometrically to the projection of the Newton polyhedron of $G$
from the point $x^{\textnormal{\small{ord}}(G)}$ onto the $yz$-coordinate 
plane (for more details we refer to \cite{MR935710, Hausera} and remark \ref{hironaka} in section \ref{Kap1}) and yields 
a resolution problem which has exactly the same features as the purely inseparable equation. 

The surfaces we are considering are embedded in a smooth three-dimensional algebraic variety over an algebraically closed field $K$. In general, such a variety does not admit a covering by open subsets isomorphic to open subsets of $\A_K^3$. To simplify the situation we pass to \'etale neighborhoods and thus work in the completion of the local rings. This makes the construction of invariants easier and allows us to restrict to the case that the completion of the local ring at a point is the quotient of a formal power series ring in three variables modulo a principal ideal. For simplicity of notation we will assume that this ideal is generated by a polynomial, i.e., that the surface is locally embedded in $\A_K^3$. The constructions in the general case are similar.

Therefore we will restrict to the case that $F$ and $G$ from above are elements of a polynomial ring $R$ over an algebraically closed field of positive characteristic. 
Coordinate changes of the form $x \rightarrow x+v(y,z)$ can be used to
eliminate $p$-th powers from the polynomial $F$ without
changing, up to isomorphism, the geometry of the algebraic variety
defined by $G$. Therefore it is natural to work in the quotient $Q=R/R^p$ of $R$ by the subring $R^p$ of $p$-th powers. Especially, the problem of the resolution of $G$ can be transferred to the problem of the monomialization of $F$ modulo $R^p$. It appears to be surprisingly difficult to extract substantial information on the complexity of the singularities of $G$ from the knowledge of $F$ up to $p$-th powers. Of course, any reasonable measure of complexity should not increase under blowup in smooth centers.

The invariant constructed by Hironaka \cite{Hironaka}
is built on coordinate independent
data extracted from the Newton polyhedron of the
defining equation in local coordinates. It has the drawback that its improvement under blowup
relies on the choice of an admissible center of {\it maximal
possible dimension}. Said differently, when a smooth curve can be
chosen as center (because the surface has constant order along the curve), it has to be chosen, otherwise the invariant may go up under blowup. It is precisely this restriction which makes it very hard, if not impossible, to generalize the invariant and the
induction argument of Hironaka to threefolds. 

\ind Our resolution invariants will also be constructed from the Newton polyhedron of $G$ in a coordinate independent manner. The first is primarily based on the measure ``\height'', which reflects in an asymmetric way the distance of the Newton polygon of $F$ from being a quadrant. The second builds on the characteristic zero invariant. In very specific situations -- according to special positions of the Newton polygon in the positive quadrant -- these invariants are adjusted by subtracting a ``bonus''. 

\ind We shall give a precise formulation and a systematic proof of the
following statement  (cf.  Theorem \ref{Theo11} in section \ref{Kap2}, and section \ref{Kap3}). The assumption of pure inseparability is not crucial.

\noindent 
\begin{Theo} \label{Theo1} Let $X$ be a singular surface in $\A^3$, defined over an algebraically closed field of characteristic $p>0$ by a purely inseparable equation of the form $$G(x,y,z)=x^p+F(y,z)$$ where $F$ is a polynomial of order $\geq p$ at $0$. Let $\tau: \widetilde{\A^3} \rightarrow \A^3$ be the blowup of $\A^3$ with center the origin, and let $\pi: \widetilde{\A^2} \rightarrow \A^2$ be the induced blowup of $\A^2=0 \times \A^2$ with exceptional divisor $E$.
Let $f$ be the residue class of $F$ modulo $p$-th powers and assume that $f$ is not a quasi-monomial. \smallskip

\noindent (i) There exists a local invariant $i_a(f)$ such that for any closed point $a'$ in $E$ one has $$i_{a'}(f') < i_a(f),$$ where $f'$ denotes the equivalence class of the transform $F'$ of $F$ modulo $p$-th powers.  \smallskip

\noindent (ii) Finitely many point blowups transform $f$ in any point of the exceptional divisor into a monomial or make the order of $G$ drop below $p$. 
\end{Theo}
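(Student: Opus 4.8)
The plan is to construct the invariant $i_a(f)$ as a lexicographically ordered tuple whose first and most significant entry is the \height of the Newton polygon of $f$ (in the minimizing triangular coordinates subordinate to the flag), possibly corrected by a \bonus. I would then split the analysis of a single blowup into the standard charts of the point blowup $\pi\colon\widetilde{\A^2}\to\A^2$ and, at each closed point $a'\in E$, compare the Newton polygon of $f'$ with that of $f$. The point of departure is that, away from the so-called kangaroo points, the \height cannot increase under blowup; this is the surface analogue of the classical fact (recalled in Section~\ref{context}) that the order of the coefficient ideal does not go up at equiconstant points, and it follows from the monomiality of the local blowup after a triangular linear change at the base point. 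At kangaroo points the \height may go up by a controlled amount (Moh's bound), and here I would invoke Zeillinger's observation quoted in the introduction: the increase is accompanied by the Newton polygon moving closer to a coordinate axis (the width, or some secondary entry of $i_a$, strictly improves). The \bonus of $0$, $\varepsilon$, or $1+\delta$ is then calibrated precisely so that in the kangaroo situation the loss of $1$ in the \height is overcompensated, while in all non-kangaroo situations the uncorrected drop already dominates any change in the \bonus. This proves (i).

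For part (ii), I would run the usual termination argument: since $i_{a'}(f')<i_a(f)$ strictly in the lexicographic order on a well-ordered value set (natural numbers for the \height entries, together with the finite set $\{0,\varepsilon,\delta,1+\delta\}$ for the \bonus), there can be no infinite sequence of point blowups all of whose centres lie in successive exceptional divisors and at each of which $f$ fails to be a monomial and the order of $G$ stays equal to $p$. Hence after finitely many blowups, at every point of the exceptional divisor either the \height reaches its minimal value — which by the very definition of the \height (distance from being a normal crossings divisor) forces $f$ to be a monomial, the $\nu$-quasi-ordinary case — or the order of $G$ has dropped below $p$, at which point the induction on the order takes over and we are outside the scope of this theorem. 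The excluded quasi-monomial case is handled separately, directly, as indicated in Section~\ref{results}.

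The main obstacle, and the technical heart of the argument, is the kangaroo analysis underlying (i): one has to show that the only way the \height increases is through a kangaroo point, classify these points following Hauser's work \cite{Hausera, HauserBull}, and check by an explicit computation with the Newton polygon in the blowup charts that in each such case the secondary invariant improves by enough to absorb the \bonus. This requires carefully tracking how the triangular coordinate change at the base point interacts with the choice of the minimizing hypersurface above, i.e.\ verifying that the \height computed after blowup in \emph{new} subordinate coordinates is still comparable to the one below — the analogue of the failure of maximal contact, which is exactly why the flag and the restriction to the triangular (``Borel'') subgroup were introduced. A secondary but genuine difficulty is the bookkeeping of exceptional factors: one must factor the exceptional monomial out of $f'$ correctly in each chart before reading off the Newton polygon, and show that this is compatible with the shade/\dorder being the maximum over coordinate choices. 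Once these points are in place, both (i) and the termination statement (ii) follow formally.
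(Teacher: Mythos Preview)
Your overall architecture is right --- height minus bonus as the leading component, Moh's bound at kangaroo points, the bonus calibrated so that the jump of $+1$ is absorbed because the polygon becomes adjacent, and well-ordered descent for (ii). But there is a genuine gap in (i): you implicitly assume that away from kangaroo points the height \emph{strictly} drops. It does not. Under the horizontal move $F^*(y,z)=F(yz,z)$ the vertices of $N(F)$ slide horizontally, so $\Ordery$, $\Degreey$, and hence the height and the bonus are all unchanged whenever every edge of $N(F)$ has slope of absolute value $<1$. In that case $\Revheight(f^*)=\Revheight(f)$ and your first component gives no progress at all. This is not a kangaroo phenomenon and is not cured by the bonus.

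The paper handles exactly this case with the second component of $i_a(f)$, the \emph{slope} (not the width): one shows $\Slope(F^*)=\Slope(F)-\alpha_1<\Slope(F)$ under a horizontal move with constant height, and then --- this is the step you allude to as ``comparing new subordinate coordinates above and below'' --- one checks that any subordinate coordinate change $\varphi'$ at $a'$ realizing $\Slope(f^*)$ lifts to a subordinate change $\varphi$ at $a$ via $\varphi'(y,z)=(y+A(z),z)\mapsto \varphi(y,z)=(y+zA(z),z)$, so that $\Slope(f^*)<\Slope(f)$ intrinsically. Your proposal places this commutation issue at the height level, but the height is a \emph{minimum} and needs no such lift; it is the slope (a maximum over coordinates realizing the height) that does. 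Correspondingly, the paper's organizing principle is the trichotomy translational/horizontal/vertical (T/H/V): (T) forces a strict drop of the intricacy via Moh and adjacency, (V) forces a strict drop except for quasi-monomials, and (H) is where the slope does the work. Once you insert the slope and this lifting argument for the (H) case, the rest of your outline goes through.
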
 

\ind 
The invariant $ i_a(f)$ is defined in section \ref{Kap1}. It can be shown that the set of closed points $a \in \A^2_K$ in which $f \in Q=R/R^p$ is not monomial consists of at most finitely many points.
Once $f$ is monomial, there exists a simple combinatorial method to decrease the order of $G$ by finitely many further point and curve blowups. 
Note that in contrast to Hironaka's invariant, which requires to choose in every step of the resolution algorithm a center of maximal possible dimension, we always blow up in a point until $f$ is monomial. Only in this situation 
curve blowups are possibly needed in order to lower the order of $G$.
Hence we achieve a proof of the following result:

\begin{Cor} \label{Theo2} Finitely many blowups of points and smooth curves will decrease the order of any purely inseparable singular two-dimensional hypersurface whose maximum of local orders is less or equal to the characteristic of the ground field. \end{Cor}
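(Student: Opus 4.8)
The plan is to deduce the corollary from Theorem~\ref{Theo1}, supplemented by a treatment of the two situations to which the theorem does not directly apply: points where the order is strictly below $p$, and the monomial (``$\nu$-quasi-ordinary'') case. First I would pass to the local setting. Let $X$ be a purely inseparable singular surface embedded in a smooth threefold, with $m:=\max_a\mathrm{ord}_a(X)\le p$. Upper semicontinuity of the order and quasi-compactness of $X$ show that the top locus $T=\{a:\mathrm{ord}_a(X)=m\}$ is a proper closed subscheme, and a blowup in a smooth center contained in $T$ never raises the order anywhere; hence it suffices to lower the order locally at the points of $T$ and reassemble, treating one point at a time, a blowup being an isomorphism away from its center. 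Passing to \'etale neighbourhoods and completing, at such a point we write $X=V(G)$ with $G=x^p+F(y,z)$; absorbing the $p$-th power part of $F$ by a change $x\mapsto x+v(y,z)$ we may assume $F$ is not a $p$-th power, so that its class $f\in Q=R/R^p$ is nonzero and $\mathrm{ord}_0(G)=\min(p,\mathrm{ord}_0(F))=m$.

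If $m<p$ then $m!$ is invertible in $K$, hypersurfaces of maximal contact exist (via a suitable partial derivative of $G$ of order $m-1$ in $y,z$, which has order $1$), and the classical characteristic-zero resolution argument for surfaces applies verbatim, lowering the order after finitely many point and curve blowups. This reduces us to $m=p$, i.e. $\mathrm{ord}_0(F)\ge p$, which is precisely the hypothesis of Theorem~\ref{Theo1}. By the remark following that theorem the set of closed points at which $f$ is not monomial is finite, and along the remainder of $T$ the class $f$ is already monomial in suitable local coordinates. If $f$ is a quasi-monomial at the point under consideration, a direct combinatorial resolution of $X$ is available (section~\ref{Kap1}). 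Otherwise Theorem~\ref{Theo1}(i) produces the local invariant $i_a(f)$, whose values lie in a lexicographically ordered, well-founded set and which drops strictly at every closed point of the exceptional divisor of a point blowup; hence, by Theorem~\ref{Theo1}(ii), finitely many point blowups make $f$ monomial at every point of the exceptional divisor, or make $\mathrm{ord}(G)$ drop below $p$.

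It remains to treat the monomial case, in which $f$ is monomial in suitable local coordinates at every point of $T$ where the order is still $p$ --- equivalently, the first coefficient ideal is monomial. Then the Newton polyhedron is generated by monomials and the resolution problem becomes purely combinatorial: a Hironaka-type monomial game, blowing up successively the singular locus of the exceptional normal crossings divisor (now a smooth point or a smooth curve contained in $T$), lowers $\mathrm{ord}(G)$ below $p$ after finitely many point and curve blowups. Reassembling the finitely many local sequences over the finitely many bad points of $X$ produces a finite global sequence of point and curve blowups after which $\max_a\mathrm{ord}_a(X)<p$, as asserted.

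The main obstacle, apart from Theorem~\ref{Theo1} itself whose proof is the substance of the paper, is the bookkeeping surrounding these steps: one must check that the non-monomial locus is genuinely finite and is never enlarged by blowups in admissible centers, that the curve centers occurring in the order-$<p$ step and in the monomial game are smooth and contained in $T$, and that the well-foundedness of the value set of $i_a(f)$ forces the point-blowup phase to terminate. Each of these is routine, but the first --- finiteness and stability of the non-monomial locus --- is the one genuinely tied to the positive-characteristic phenomena analysed in the paper, and is where I would expect to spend the most care.
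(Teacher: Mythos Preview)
Your proposal is correct and follows essentially the same route as the paper: reduce to order $p$, invoke Theorem~\ref{Theo1} to reach the monomial case via the strictly decreasing invariant $i_a(f)$, use the finiteness of the non-monomial locus (the paper's Propositions~\ref{lemma_1} and~\ref{lemma_2} in section~\ref{termination}) to ensure termination, and then handle the monomial case combinatorially (the paper's section~\ref{Kap5}). Your identification of the finiteness of the non-monomial locus as the genuine technical point beyond Theorem~\ref{Theo1} itself is exactly right; the paper devotes section~\ref{termination} to this, proving first that the monomial locus is Zariski-open (via Artin approximation) and then that the bad points are contained in the singular locus of the plane curve $V(f)$, hence finite. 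One small point you glossed over and the paper records as Remark~\ref{transversality}: the curve centers arising in the monomial phase must be made transversal to the existing exceptional divisor, possibly at the cost of preliminary point blowups.
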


% -----------------------

\ind Singularities of an arbitrary surface $X=V(G)$ in $\A^3_K$ with $\textnormal{ord}(G)<p$ can be resolved using the usual resolution algorithm from characteristic zero. Therefore, Theorem \ref{Theo1} and Corollary \ref{Theo2} imply 
the following statement: 

\begin{Cor} \label{corol2}
Finitely many suitable blowups of points and smooth curves yield an embedded resolution of a purely inseparable two-dimensional hypersurface $X$ whose maximum of local orders is less or equal to the characteristic of the ground field (i.e., the strict transform is smooth and the total transform is a normal crossings divisor.) \\
\end{Cor}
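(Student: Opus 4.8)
The plan is to obtain Corollary~\ref{corol2} directly from Corollary~\ref{Theo2} (hence ultimately from Theorem~\ref{Theo1}) together with the fact, quoted above, that a purely inseparable surface whose order is $<p$ is resolved by the classical characteristic-zero resolution algorithm --- used here in its version that also incorporates a prescribed simple normal crossings boundary divisor.

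First I would dispose of the case $m<p$, where $m$ denotes the maximum of the local orders of a local equation of $X$. If $m\le 1$ the surface is already smooth, and if $2\le m<p$ the usual characteristic-zero algorithm applies unchanged, since hypersurfaces of maximal contact exist once the order lies below the characteristic. In either case this algorithm produces, by finitely many blowups in smooth centers, a strict transform that is smooth and a total transform that is a normal crossings divisor; this already settles Corollary~\ref{corol2} for such $X$, and it is what we shall invoke at the end of the general case.

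Second, if $m=p$, I would apply Corollary~\ref{Theo2}: after first blowing up points until the residue $f$ of $F$ modulo $p$-th powers has become a monomial at every point (Theorem~\ref{Theo1}(ii)) and then carrying out the finitely many point and curve blowups needed in the ensuing purely combinatorial situation, one arrives at a surface whose maximum local order is $<p$. Since the maximum order can never rise above $p$ again, we are then back in the previous case and conclude. The only thing to check for the hand-over is that the blowups performed so far keep the accumulated exceptional divisor $E=\bigcup_i E_i$ simple normal crossings: their centers are either isolated points --- which automatically have normal crossings with any simple normal crossings divisor --- or, in the monomial situation, smooth curves that are intersections of coordinate hyperplanes and hence again meet $E$ transversally. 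Thus $E$ is simple normal crossings when the classical algorithm takes over, which then makes the strict transform smooth and the total transform $X^{st}\cup E$ a normal crossings divisor.

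The genuinely hard content is entirely absorbed into Theorem~\ref{Theo1} and Corollary~\ref{Theo2}; in the present reduction the only points demanding care --- and hence the main (if routine) obstacle --- are verifying that the classical characteristic-zero embedded-resolution algorithm indeed applies verbatim once the order has dropped below $p$ and in the presence of the boundary $E$, and the normal-crossings bookkeeping just indicated for the centers dictated by the Theorem~\ref{Theo1} machinery. Both are standard, but both must be stated explicitly to make the deduction airtight.
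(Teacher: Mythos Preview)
Your deduction is essentially the paper's own: the sentence immediately preceding Corollary~\ref{corol2} states that singularities of order $<p$ are resolved by the characteristic-zero algorithm, so Theorem~\ref{Theo1} and Corollary~\ref{Theo2} together yield the result, which is exactly your two-case split.

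One small correction to your normal-crossings bookkeeping: the curve centers arising in the monomial phase (the $y$- or $z$-axis of $\A^3_K$) are \emph{not} automatically transversal to the accumulated exceptional divisor, contrary to what you assert. The paper flags this explicitly in Remark~\ref{transversality}, noting that preliminary point blowups may be required to restore transversality before a curve can be taken as center. With that adjustment your argument matches the paper's.
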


%%%%%%%%%%%%%%%%%%%%%%%%%%%%%%%%%%%%%%%%%%%%%%%%%%%%%%%%%%%%%%%%%%%%%%%%%%%%%%%%%%%%%%%%%%%%%%%%%%%%%%%%
%%%%%%%%%%%%%%%%%%%%%%%%%%%%%%%%%%%%%%%%%%%%%%%%%%%%%%%%%%%%%%%%%%%%%%%%%%%%%%%%%%%%%%%%%%%%%%%%%%%%%%%%

\section{The resolution invariant} \label{Kap1}
$ $ \\
\noindent In the last section we already indicated why resolution of purely inseparable surfaces $G=x^p+F(y,z)$ with $\textnormal{ord}_0(G)=p$ boils down to the monomialization of $F$ modulo $R^p$, where $R$ denotes the coordinate ring of the affine plane $\A^2_K$ and $R^p$ its subring of $p$-th powers.
Therefore we will in the sequel restrict to the study of polynomials $F(y,z)$ modulo $p$-th powers.   

Denote by $R_a$ the localization of $R$ at a closed point $a$ of $\A^2$ and $\widehat R_a$ its completion with respect to the maximal ideal. A regular parameter system $(y,z)$ of $\widehat R_a$ will be called {\it a system of local coordinates} of $R$ at $a$. Any choice of local coordinates $(y,z)$ induces an isomorphism of $\widehat R_a$ with the formal power series ring $K[[y,z]]$ corresponding to the Taylor expansion of elements of $R$ at $a$ with
respect to $y$ and $z$. Therefore, for any residue class $f \in R/R^p$, there is a unique expansion $F=\sum_{\alpha\beta} c_{\alpha\beta}y^{\alpha} z^{\beta}$ of $f$ in
$K[[y,z]]$ with $(\alpha,\beta)\in \N^2\setminus p\cdot \N^2$. This
corresponds to considering $\N^2$ with ``holes'' at the points of
$p\cdot \N^2$. We shall always distinguish carefully between elements $f$ in $R/R^p$ and their representatives $F$ as expansions $F(y,z)$ in $K[[y,z]]$ without any $p$-th power monomials. The dependence of $F$ on the coordinates $y$ and $z$ is always tacitly assumed without extra notation. The passage to the completion is necessary to dispose of a flexible notion of isomorphism.

A {\it local flag} $\Fe$ in $\A^2$ at $a$ is a regular element $h$ of
$\widehat R_a$ (cf. \cite{MR2063657}). Coordinates $(y,z)$ are called {\it subordinate to
the flag} if $z$ and $h$ generate the same ideal in $K[[y,z]]$. We
denote by $\Ce=\Ce_\Fe$ the set of subordinate local coordinates.
{\it Subordinate coordinate changes} are automorphisms of $K[[y,z]]$ which preserve subordinate coordinates. They are of the form $(y,z) \rightarrow (y+ v(y,z), z\cdot u(y,z))$ with series $v(y,z), u(y,z) \in K[[y,z]]$ satisfying
$\partial_yv(y,0)\neq -1$ and $u(0,0)\neq 0$. 

\ind We will first define measures which capture the ``distance'' of the expansion $F(y,z)$ of $f \in R/R^p$ at $a$ with respect to  fixed subordinate coordinates $(y,z)$ from being a monomial -- up to multiplication by units in $K[[y,z]]$. Afterwards these measures will be made coordinate independent in order to establish a local resolution invariant $i_a(f)$ for residue classes $f \in R/R^p$. 
\\

	%\input{./Bilder/Quings/Newton}   
	% Newton Polygon

 \begin{figure}[h]
 \begin{center}
 \begin{minipage}{6cm}
 \beginpicture

 \setcoordinatesystem units <7pt,7pt> point at 0 0

 \setplotarea x from -1 to 14, y from -1 to 14
 \axis left shiftedto x=0 ticks numbered from 5 to 14 by 5
                                short unlabeled from 1 to 14 by 1 /
 \axis bottom shiftedto y=0 ticks numbered from 5 to 14 by 5
                                short unlabeled from 1 to 14 by 1 /

 \put {\circle*{5}} [Bl] at  1  8
 \put {\circle*{5}} [Bl] at  3  5
 %%\put {\circle*{5}} [Bl] at  4  4
 \put {\circle*{5}} [Bl] at  6  2
 \put {\circle*{5}} [Bl] at  11 1
 \put {\circle*{5}} [Bl] at  6  11
 \put {\circle*{5}} [Bl] at  9  5
 \put {\circle*{5}} [Bl] at  2 9

 \plot 1 14  1 8   3 5  6 2   11 1  14 1  /   %Streckenzug

 \vshade  1 8 14 <,z,,> 3 5 14  <,z,,> 6 2 14 <,z,,> 11 1 14 <,z,,> 14 1 14 /

 \multiput {\multiput {\circle{5}} [Bl] at 0  0 *2 5 0 /} [Bl] at 0 0 *2 0 5 /
 \put {$z$} at 14 -0.8
 \put {$y$} at -0.8 14
 \endpicture

 \end{minipage}
 \\[2ex]
 \begin{minipage}{11.5cm}
 \begin{abbildung}\label{Newton Polygon} \begin{center} Newton polygon  of an element $F \in K[[y,z]]/K[[y,z]]^p$ with $p=5$. The monomials of $K[[y,z]]^p$ are indicated by ``holes'' $\circ$ at the points of $p \cdot \N^2$. \end{center} \end{abbildung}
 \end{minipage}
 \end{center}
 \end{figure}

\noindent The Newton polygon $N=N(F)$ of an element $F\in K[[y,z]]$ is the
positive convex hull $\textnormal{conv}(\textnormal{supp}(F)+\R_+^2)$ of the
support $\textnormal{supp}(F) =\{(\alpha,\beta)\in \N^2 \setminus p \cdot \N^2; 
c_{\alpha\beta}\neq 0\}$ of $F$. Newton polygons will be depicted in
the positive quadrant of the real plane $\R^2$, the $y$-axis being chosen
vertically, the $z$-axis to the right (see figure \ref{Newton
Polygon}). 

\ind Let $A \subset \N^2 \setminus p \cdot \N^2$ be the set of vertices of the Newton
polygon $N$ of $F$, i.e., the minimal set such that
$N=\textnormal{conv}(A+\R^2_+)$. 
The {\it order} of $F$ at $0$ is defined as $$\textnormal{ord}(F)=\min_{(\alpha,\beta) \in A} \ \alpha+\beta,$$ i.e., as the order of $F$ as a power series in $K[[y,z]]$. Note that $\textnormal{ord}(F)$ takes the same value for all coordinates $(y,z) \in \Ce$, it thus depends only on $f$ and $a$. It will be called the {\it order}  of $f \in R/R^p$ at $a$, denoted by $\textnormal{ord}_a(f)$. The {\it initial form} $f_d$ of $f$ at $a$ is the residue class of $f$ modulo $\mathfrak{m}^{d+1}$, where $d=\textnormal{ord}_a(f)$ and  $\mathfrak{m}$ denotes the maximal ideal of $R$ at $a$. Given $y$ and $z$ it is induced by the homogeneous form $F_d$ of lowest degree $d$ of the expansion $F$ of $f$, say $F=F_d+F_{d+1}+\ldots$, with $F_d\neq 0$.    Furthermore denote by
$$\Ordery(F) = \min_{(\alpha,\beta) \in A} \ \alpha$$
the {\it \ord} of $F$ with respect to $y$ (symmetric definition for $\Orderz(F)$). This is just the order of vanishing of $F$ along the curve $y=0$. We call 
$$\Degreey(F) =\max_{(\alpha,\beta) \in A} \ \alpha $$
the {\it  \degreey} of $F$ with respect to $y$ (symmetric definition for $\Degreez(F)$). 

\begin{Bem} Let $(\alpha,\beta)$ be the vertex of $N$ whose first component has the largest value among all vertices of $A$. Then the series $\widetilde F(y,z):=z^{-\beta} \cdot F(y,z) \in K[[y,z]]$ is regular with respect to the variable $y$, with pure monomial $y^\alpha$. By the Weierstrass Preparation Theorem $\widetilde F(y,z)$ equals, up to multiplication by a unit $U(y,z) \in K[[y,z]]^*$, a distinguished polynomial $P \in K[[z]][y]$ of degree $\alpha$ with respect to the variable $y$, i.e., $P(y,z)=U(y,z) \cdot \widetilde F(y,z)$, where $P=y^\alpha+c_1(z)y^{\alpha-1}+\ldots+c_{\alpha}(z)$, $c_i \in K[[z]]$, denotes a polynomial of order $\alpha$ at $0$ with respect to $y$. As $\alpha$ may be larger than the order of $\widetilde F$, the coefficients $c_i(z)$ may have order $<i$ at $0$. Up to multiplication by a unit in $K[[y,z]]^*$, also $F(y,z)=z^{\beta} \cdot \widetilde F(y,z)= U(y,z)^{-1} \cdot z^{\beta}  \cdot P(y,z)$ is a polynomial of degree $\alpha$ with respect to the variable $y$. Therefore it is 
justified to call $\alpha$ the degree of $F$ with respect to $y$. Note that we also have $\Degreey(F)=\Ordery(\widetilde F(y,0))$.

\end{Bem}

% \input{./Bilder/Quings/Height3} 
 %% Definition Höhe, totale Höhe

 \begin{figure}[h]
 \begin{center}
 \begin{minipage}{6cm}
 \beginpicture

 \setcoordinatesystem units <8pt,8pt> point at 0 0

 \setplotarea x from -1 to 32, y from -1 to 14
 \axis left shiftedto x=0 ticks numbered from 25 to 14 by 33
                                short unlabeled from 1 to 14 by 1 /
 \axis bottom shiftedto y=0 ticks numbered from 33 to 32 by 33
                                short unlabeled from 1 to 32 by 1 /

 \put {\circle*{5}} [Bl] at  14  12
 \put {\circle*{5}} [Bl] at  20  6
 \put {\circle*{5}} [Bl] at  30  3

 \plot 14 14  14 12 20 6  30 3  32 3  /   %Streckenzug

 \vshade   14 12 14  <,z,,>  20 6 14 <,z,,> 30 3 14 <,z,,> 32 3 14 /

\put {\vector(0,1){96}} at 14  6

\put {\vector(0,-1){96}} at 14  6

\put {\small{\Degreey(F)}} at 16.3 1.5

%%\put {\vector(0,1){48}} at 12.5 9

%%\put {\vector(0,-1){48}} at 12.5 9

%%\put {\small{\Updent(F)}} at 10.1 9.5

\put {\vector(0,1){72}} at 8.5 7.5

\put {\vector(0,-1){72}} at 8.5 7.5

\put {\small{\Height(F)}} at 10.9 6.5

\put {\vector(0,1){24}} at 30  1.5

\put {\vector(0,-1){24}} at 30  1.5

\put {\small{\Ordery(F)}} at 28.2 1.5

%%\put {\vector(1,0){48}} at 17  6.65

%%\put {\vector(-1,0){48}} at 17  6

%%\put {\small{\Indent(F)}} at 17.3 5.1

\setdots <3 pt>

\plot 8.5 12   14 12  /

\plot 8.5 3   30 3  /

%%\plot 12.5 6   14 6 /

%\put {\line(1,0){90}} at 8.5 2

 %\multiput {\multiput {\circle*{5}} [Bl] at 0  0 *4 3 0 /} [Bl] at 0 0 *4 0 3 /
 \put {$z$} at 32 -0.8
 \put {$y$} at -0.8 14
 \endpicture

 %-----------------------------------------
\end{minipage}
  \\[2ex]
 \begin{minipage}{11cm}
 \begin{abbildung}\label{Definition Hoehen} \begin{center} The values \Degreey(F), \Ordery(F) and \Height(F) of a polynomial
 $F$. \end{center} \end{abbildung}
 \end{minipage}
 \end{center}
 \end{figure}

\noindent We define the {\it height} of $F$ as
$$\Height(F) = \Degreey(F)-\Ordery(F).$$
This value clearly depends on the coordinates. It describes the vertical extension of the bounded edges of the Newton
polygon (see figure \ref{Definition Hoehen}) and will constitute (up to a correction term) the first component of our resolution invariant. 

\begin{Bem} Set $\alpha=\Ordery(F)$ and $\beta=\Orderz(F)$, so that $F$ factors into $F(y,z)=y^\alpha z^\beta\cdot H(y,z)$ with some polynomial $H$ which does not vanish identically along the two curves $y=0$ and $z=0$.  Then, clearly, $\Height(F) =\Degreey(F) - \Ordery(F)$ is just the order of $H(y,0)$ at $0$, i.e., the order at $0$ of the restriction of $H$ to the flag $\Fe$ defined by $z=0$.

\end{Bem} 

Analogously, we define the {\it width} of $F$ as $$\Width (F) = \textnormal{deg}_z (F) - \textnormal{ord}_z(F),$$
where $\textnormal{deg}_z(F)=\max_{(\alpha,\beta)\in A} \beta$ and $\textnormal{ord}_z(F)=\min_{(\alpha,\beta)\in A} \beta$. We call $F$ a {\it quasi-monomial} if $\Width (F)=1$ and $\Ordery(F)=0$. The respective singularities admit a simple resolution, see remark 12 in section \ref{nonincrease}. A similar notion appears in Hironaka's recent program for the resolution of singularities in characteristic $p>0$  \cite{HironakaTrieste}.

\ind If $N$ is a quadrant, we set the {\it \slope} of $F$ equal to $\Slope(F)=\infty$.
Otherwise, we define it as 
$$\Slope(F)=\frac{\alpha_1}{\alpha_1-\alpha_2} \cdot (\beta_2-\beta_1),$$ where $(\alpha_1,\beta_1)$ and
$(\alpha_2,\beta_2)$ denote those elements of $A$ whose first
components have the highest respectively second highest value among
all vertices of $A$
(see figure \ref{Projektion}). It is thus $-\alpha_1$ times the usual slope of the segment connecting the two points $(\alpha_1,\beta_1)$ and
$(\alpha_2,\beta_2)$. %Moreover it is associated to the order of the coefficient ideal of $F$ in $y=0$. 
It will be the second component of our resolution invariant. \\
   
%  \input{./Bilder/Quings/Projektion}
  %% Definition Steigung

 \begin{figure}[h]
 \begin{center}
 \begin{minipage}{6cm}
 \beginpicture

 \setcoordinatesystem units <8pt,8pt> point at 0 0

 \setplotarea x from -1 to 22, y from -1 to 14
 \axis left shiftedto x=0 ticks numbered from 25 to 14 by 33
                                short unlabeled from 1 to 14 by 1 /
 \axis bottom shiftedto y=0 ticks numbered from 33 to 22 by 33
                                short unlabeled from 1 to 22 by 1 /

 \put {\circle*{5}} [Bl] at  4  12
 \put {\circle*{5}} [Bl] at  10  6
 \put {\circle*{5}} [Bl] at  20  3

 \plot 4 14  4 12 10 6  20 3  22 3  /   %Streckenzug

 \vshade   4 12 14  <,z,,>  10 6 14 <,z,,> 20 3 14 <,z,,> 22 3 14 /

\setdots <3 pt>

\plot 4  12   10 6    16 0  /
\plot 4 12   4 0 /

\put {\vector(1,0){96}} at 10  0,7
\put {\vector(-1,0){96}} at 10  0,1
\put{$\Slope(F)$} at 9 0.8

\put{\small{$(\alpha_1,\beta_1)$}} at 6.3 12.5
\put{\small{$(\alpha_2,\beta_2)$}} at 12.3 6.5

%\put {\line(1,0){90}} at 8.5 2

 %\multiput {\multiput {\circle*{5}} [Bl] at 0  0 *4 3 0 /} [Bl] at 0 0 *4 0 3 /
 \put {$z$} at 22 -0.8
 \put {$y$} at -0.8 14
 \endpicture

 %-----------------------------------------
\end{minipage}
  \\[2ex]
 \begin{minipage}{11cm}
 \begin{abbildung}\label{Projektion} \begin{center} \Slope(F) of
 $F$. \end{center} \end{abbildung}
 \end{minipage}
 \end{center}
 \end{figure}

\noindent As we will see in section \ref{Kap3.1}, especially in Lemma \ref{lem6} and in the example given in remark \ref{Bsp},  the \height \ can increase under blowup in some special situations. To correct this drawback, we will have to consider the position of the Newton polygon: Call $F$ {\it adjacent} if $\Ordery(F)=0$,
{\it close} if $\Ordery(F)=1$, and {\it distant} if $\Ordery(F)\geq 2$. The {\it bonus} of $F$ is set equal to
$$
\Bonus(F) = \left\{
\begin{array}{ll}
1+\delta & \textnormal{ if $F$ adjacent,}  \\
\varepsilon & \textnormal{ if $F$  close,} \\
0 & \textnormal{ if $F$ distant,} 
\end{array} \right.
$$ where 
$\delta$ and $\varepsilon$ denote arbitrary
constants $0 < \varepsilon< \delta < 1$.
Note that all these definitions break the symmetry
between $y$ and $z$.   Then we define the {\it \revheight\ }  of $F$ as $$\Revheight(F)=\Height(F)-\Bonus(F).$$

%%%%%%%%%%%%%%%%%%%%%%%%%%%%%%%%%%%%%%%%%%%%%%%%%

\ind We now associate these items in a coordinate independent
way to residue classes $f$ in $R/R^p$. For any choice of local
coordinates $(y,z)$ at $a\in \A^2$, take the unique expansion
$F=\sum_{\alpha\beta} c_{\alpha\beta}y^{\alpha} z^{\beta}$ of $f$ in
$K[[y,z]]$ with $(\alpha,\beta)\in \N^2\setminus p\cdot \N^2$. Let $\Fe$ be a local flag at $a$ fixed throughout,
and $\Ce=\Ce_\Fe$ the set of subordinate local coordinates $(y,z)$
in $R$ at $a$.  Note that the highest vertex $c=(\alpha,\beta)$
of $N=N(F)$ does not depend on the choice of the subordinate
coordinates, i.e., that any coordinate change subordinate to the
flag $\Fe$ leaves this vertex invariant. Hence $\Degreey(F)$ takes the same value for all subordinate coordinates. 
For $f\in R/R^p$ with expansion $F=F(y,z)$ at $a$ with respect to $(y,z)\in
\Ce$ we set \begin{eqnarray*} \height_a(f) &=& \min  \{\Height(F); (y,z)\in
\Ce\} \\ &=& \Degreey(F)-\max \{\Ordery(F); (y,z)\in \Ce\} \end{eqnarray*}
and call
it the {\it height} of $f$ at $a$. This number only depends on $f$, the
point $a$ and the chosen flag $\Fe$. 

\ind We say that $f$ is {\it monomial} at $a$ if there exists a (not necessarily subordinate) coordinate change transforming $F$ into a monomial
$y^\alpha z^\beta$ times a unit in $K[[y,z]]$. 
Note that this is in particular the case if
$\Height_a(f)=0$ (whereas the converse is not true). 

\begin{Bem} \label{bemerkung1}
A simple computation shows the following statement: If $f$ is adjacent and not monomial at $a$, then $\Height_a(f)$ is at least equal to $2$. 
\end{Bem}

\noindent By definition, $\Bonus(F)$ 
takes the same value, $\Bonus_a(f)$, for all coordinates realizing $\height_a(f)$, because
$\Ordery(F)$ does.  We conclude that \begin{eqnarray*} \Revheight_a(f)&=&
\height_a(f)-\Bonus_a(f) \\ &=&\min\{\Height(F)-\Bonus(F); (y,z)\in
\Ce\}\end{eqnarray*} only depends on $f\in R/R^p$, the point $a$ and the chosen
flag $\Fe$. This will be the first component of our local resolution invariant.
It belongs to the well ordered set $\N_{\delta,\varepsilon}=\N - \delta \cdot \{0,1\} - \varepsilon \cdot \{0,1\}$. 
As we mostly consider fixed points we omit the reference to $a$ 
and simply write 
$$\Revheight(f)=\height(f)-\Bonus(f).$$ 
%
%%%%%%%%%
%
\ind The second component of our local resolution invariant is given by $$\Slope_a(f)=\max\{\Slope(F); (y,z) \in \Ce \textnormal{ with } \Height(F)=\height_a(f)\}.$$ It is called the {\it \slope} of $f$ and
also only depends on $f\in R/R^p$, the point $a$ and the chosen flag
$\Fe$. Again we omit the reference to $a$ and simply write
$\Slope(f)$. \\

\ind The local resolution invariant of $f\in R/R^p$ at $a$ with respect to the chosen flag $\Fe$ is defined as
$$i_a(f) = (\Revheight_a(f), \Slope_a(f)).$$ We consider this pair
with respect to the lexicographic order with $(0,1) < (1,0)$, and call it the {\it adjusted height vector invariant} of $f$ at $a$. Sometimes we shall write $i_a(f,\Fe)$ in order to emphasize the dependence on the flag.\\

\begin{Bem} \label{comp}
Note that $\Height_a(f)$ and $\Slope_a(f)$, which are the main ingredients of our local resolution invariant, and the primary measure $\textnormal{ord}_b(G)$ are all of the same type (see figure \ref{diagram_components}):  The order of $G(x,y,z)$ at a point $b$ equals in the purely inseparable case $G=x^p+F(y,z)$ with $\textnormal{ord}(F) \geq p$ the height of the Newton polyhedron $N(G) \subset \N^3$ with respect to the variable $x$, cf. figure 5. Furthermore the coefficient ideal of $G(x,y,z)$ with respect to $x=0$ is generated by the
polynomial $F(y,z)$. And $\Height(f)$ measures the (minimal) height of the Newton polygon $N(F) \subset \N^2$ of the polynomial $F$ with respect to the variable $y$. 
Finally the \slope \ of $F$ can be thought of as a certain height 
of the Newton polygon in $\N$ of the coefficient ideal of $F$ in $y=0$.   \\

	\begin{figure}[h]
	\begin{center}
	\includegraphics[scale=0.45]{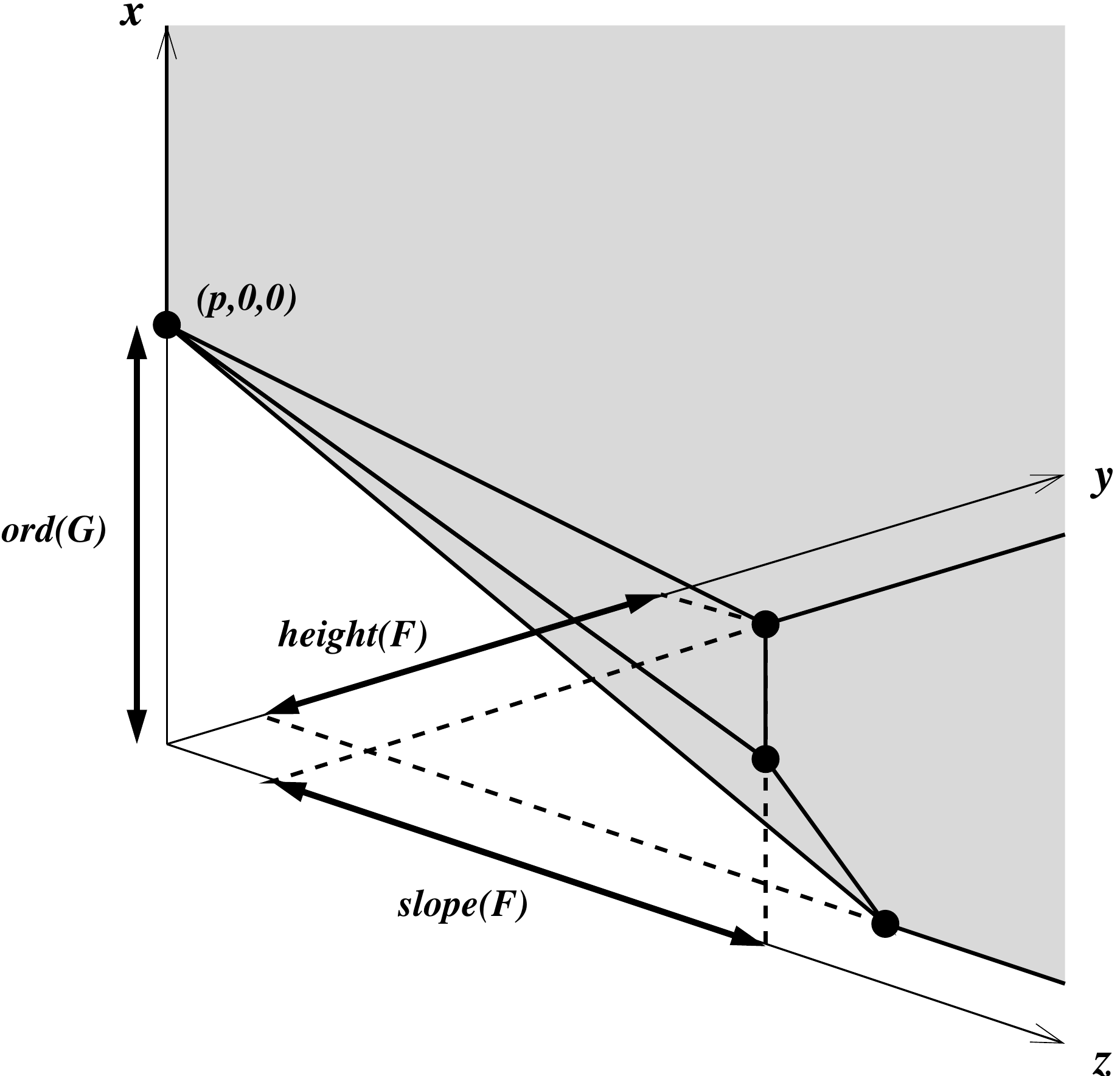}   
	\\ 
	[1ex]  \begin{minipage}{11cm}
       \begin{abbildung}\label{diagram_components} \begin{center}
       The measures $\textnormal{ord}(G)$, $\Height(F)$ and $\Slope(F)$. \\ \end{center} \end{abbildung}
      \end{minipage}
	\end{center}
	\end{figure}

\end{Bem}

\begin{Bem} \label{abhyankar}
Apparently, Abhyankar has considered an invariant which is similar to our $i_a(f)$, see section 7 in \cite{Abhyankar-nonsplitting} and definition 7.3 in \cite{Cutkosky-on-Abhyankar}. He treats the more general case of arbitrary monic polynomials of order a power of the characteristic. In the case of purely inseparable polynomials of order $p$, his case distinction is very close in spirit to ours (though there are more cases in \cite{Abhyankar-nonsplitting}); his bonus, however, is slightly smaller (equal to $1$ if $F$ is adjacent, and $0$ otherwise), which implies that the invariant drops less often (but the definition still ensures that it does not increase). The case of constancy of the resolution invariant may therefore occur more often; this requires to take into account additional invariants to show that the situation improves under blowup.
\end{Bem}

\begin{Bem} \label{hironaka}
There also appears a similarity to the invariant of Hironaka  \cite{Hironaka, MR1748627}: Let $g$ be an element of the coordinate ring $S$ of $\A^3$ and let $G=\sum_i c_i(y,z)x^i$ be the expansion of $g$ with respect to a local coordinate system $(x,y,z)$ at the point $b \in \A^3$. Let $d$ be the order of $g$ at $b$. After a generic linear coordinate change we may assume that $c_d(0,0) \neq 0$. Due to the Weierstrass Preparation Theorem there exists an invertible power series $u(x,y,z)$ such that $u \cdot g =x^d+\sum_{i<d}c_i'(y,z)x^i$. Now let $N_{y z}(G) \subseteq \Q_+^2$ be the projection with center $(d,0,0)$ of the Newton polyhedron $N(G)$ of $G$ onto the $yz$-plane. Note that this projection extends the induction argument to arbitrary surface equations and their associated first coefficient ideal instead of dealing only with purely inseparable surface equations. Let $\alpha=(\alpha_y,\alpha_z)$ and $\beta=(\beta_y,\beta_z)$ denote those vertices of $N_{yz}(G)$ whose $y$-components have the largest respectively second largest value among all vertices of $N_{yz}(G)$ (in case that $G$ is not monomial). Furthermore let $s_{\alpha \beta}=\frac{\alpha_y-\beta_y}{\alpha_z-\beta_z} \in \Q_-$ be the usual slope of the segment from $\alpha$ to $\beta$. Then Hironaka defines the following rational vector 
$$J_{b,(x,y,z)}(G)=(\textnormal{ord}_a(g),\alpha_y,s_{\alpha \beta},\alpha_y+\alpha_z)\in \Q^4.$$ 
To obtain a coordinate free definition,  choose subordinate coordinates for the chosen flag which maximize $(\alpha_z,\alpha_y,s_{\alpha \beta},\alpha_y+\alpha_z)$ with respect to the lexicographic order. Now the resolution invariant is given as $i_b=i_{b,(x,y,z)}=(\textnormal{ord}_a(g),\alpha_y,s_{\alpha \beta},\alpha_y+\alpha_z)$, where $(x,y,z)$ denote such maximizing subordinate coordinates. 
Additionally to the difference in the definition of the invariants in the approach of Hironaka and ours, another crucial distinction lies in the choice of admissible centers. More precisely, whereas we always blow up in a point until $G$ is of the form $G=x^p+y^mz^nA(y,z)$ with $A(0,0) \neq 0$ (and then also allow smooth curves as centers) or until $\textnormal{ord}(G)$ has dropped, Hironaka has to distinguish in each step whether the top locus of $g$ contains a smooth curve or just consists of isolated points. In order to show the decrease of the invariant, he then has to choose the largest possible smooth center. This restriction makes it difficult  to generalize the method and the invariant to higher dimensions. \\
\end{Bem}

\begin{Bem}

The resolution invariant could also be defined differently: First, instead of adding $\varepsilon$ and $\delta$ to the bonus, one could take the intricacy minus $0$ or $1$, but add to it a new component, which captures the adjacency. Its value would be $-2$ for being adjacent, $-1$ for being close, and $0$ otherwise. The treatment of quasi-monomials would have to be revised.

Secondly, one could take the minimum of the height over all coordinates, and not just those subordinate to a flag. In fact, the minimum has always to be realized before the blowup, so the restriction to triangular coordinate changes is of no help. However, for the slope (which is a maximum), subordinate coordinates may be necessary. Observe here that for arbitrary polynomials of order $p$, one has to take first a maximum to define a suitable coefficient ideal and then its height. For instance, one could first maximize the exceptional factors and then take the resulting height as a minimum.

Thirdly, back in the purely inseparable case, if we assume having factored an exceptional monomial from $F$, one could take the height as the minimum over all coordinate changes. But then, applying a generic linear change $(y,z)\rightarrow (y,z+ty)$ we would always fall back on the order (say, in the terminology of Hironaka, the residual order). So it seems that the height is not so far away from the order of the coefficient ideal.

Finally, let us comment on the choice of the bonus: The crucial datum which bounds the height or the order after blowup under a translational move is the ``volume'' of the initial form $F_d$ of $F$, say the number $d-r-s$ where $y^rz^s$ denotes the maximal monomial which can be factored from $F_d$ in suitable coordinates (not from $F$). By the characterization of wild singularities we know that an increase of the height or order can only occur when $d$ is a multiple of the characteristic. In this case, $r=0$ does not occur, since $z^d$ is a $p$-th power and does not count. But if $F_d$ is not a monomial (and still $d$ is a multiple of $p$), also the monomial $yz^{d-1}$ can be eliminated by a linear coordinate change, so that the volume of $F_d$ is always $\leq d-2-s$ if a preliminary $r$ would be $0$ or $1$ (say, if $F_d$ in given coordinates is adjacent or close). This argument is not valid if $d$ is not a multiple of $p$, but in this case no increase of the height or order occurs (nevertheless, if the height or the order remain constant, a secondary measure has to be considered and shown to decrease. This is still an open question.)

All this suggests to measure in the bonus the difference of the height/order of the initial form $F_d$ and the height/order of $F$. Here, the congruence of $d$ modulo $p$ comes into play, as well as the location of $F_d$ with respect to the $z$-axis.

\end{Bem}\medskip

%%%%%%%%%%%%%%%%%%%%%%%%%%%%%%%%%%%%%%%%%%%%%%%%%%%%%%%%%%%%%%%%%%%%%%%%%%%%%%%%%%%%%%%%%%%%%%%%%%%%%%%%
%%%%%%%%%%%%%%%%%%%%%%%%%%%%%%%%%%%%%%%%%%%%%%%%%%%%%%%%%%%%%%%%%%%%%%%%%%%%%%%%%%%%%%%%%%%%%%%%%%%%%%%%

\section{Logical structure of the proof of Theorem \ref{Theo1}} \label{Kap2} 
$ $ \\
\noindent We sketch in this section the reasons for the decrease of the adjusted height vector under point blowup, i.e., the proof of Theorem \ref{Theo1} (the details come in the next section). Due to the definition of the invariant, this will immediately imply the local monomialization of $F(y,z)$ modulo $p$-th powers, from which there is an easy combinatorial way to decrease the order of the purely inseparable surface equation $G=x^p+F(y,z)$ by finitely many further point and curve blowups (section \ref{Kap5}). Together with the study of the non-monomial locus in section \ref{termination}, this will also establish Corollary \ref{Theo2}.

Before explaining the overall strategy we specify the statement of Theorem \ref{Theo1}. Let $a$ be a closed point of $\A^2$ and let $\Fe$ be a fixed local flag in $\A^2$ at $a$. Let $\pi:\widetilde \A^2\rightarrow \A^2$ be the blowup with center $a$ and exceptional divisor $E =\pi^{-1}(a)$. 
The flag $\Fe$ at $a$ induces in a natural way a flag $\Fe'$ at any closed point $a'$ of $E$ by setting
$$\Fe'=\left\{ \begin{array}{lll} \Fe^s & \textnormal{if} & a' \in E \cap \Fe^s, \\ E & \textnormal{if} & a' \notin E \cap \Fe^s,\end{array}\right.$$
where $\Fe^s$ denotes the strict transform of $\Fe$ under $\pi$ (for more details see \cite{MR2063657}).

\ind Denote by $R'$ the respective Rees algebra of the coordinate ring $R$ of $\A^2$, say $R'=\oplus_{k\geq0}\, \mathfrak{m}^k$, where $\mathfrak{m}$ denotes the maximal ideal of $R$ defining $a$. Denote by $f' \in R'/ R'^p$
the strict transform of $f$ under $\pi$, defined as the equivalence class of the  strict transform $F'$ of a representative $F$ of $f$. It is a simple task to check that $f'$ is well defined, i.e., does not depend on the various choices. Thus we dispose of the adjusted height vector $i_{a'}(f',\Fe')$ of $f'$ at all points $a'$ of $E$. Theorem \ref{Theo1} then reads as follows. 

%-------------------------------------------------
%-------------------------------------------------
%-------------------------------------------------
%            THEOREM   1'
%-------------------------------------------------
%-------------------------------------------------
%-------------------------------------------------
%-------------------------------------------------

\noindent 
\begin{Theo} \label{Theo11}  (i) Let $F$ be a polynomial in two variables $y, z$ over an algebraically closed field of characteristic $p>0$. Denote by $f$ the residue class of $F$ modulo $p$-th powers and assume that $f$ is not a quasi-monomial at a given closed point $a$ of $\A^2$. Fix a local flag $\Fe$  in $\A^2$ at $a$. Let $\tau: \widetilde{\A^2}\rightarrow \A^2$ be the point blowup with center $a$ and exceptional divisor $E=\pi^{-1}(a)$.  For any closed point $a'$ in $E$, denoting by $f'$ and $\Fe'$ the transforms of $f$ and $\Fe$  in $\widetilde{\A^2}$, the adjusted height vector $i_a(f,\Fe)$ of $f$ at $a$ with respect to $\Fe$ satisfies $$i_{a'}(f',\Fe') < i_a(f,\Fe).$$ 

\noindent (ii) Let $X$ be a reduced two-dimensional closed subscheme of a smooth three-dimensional ambient scheme $W$ of finite type over an algebraically closed field of characteristic $p>0$. Let $b$ be a singular closed point of $X$ of order $p$. Assume that $X$ is defined in local coordinates of $W$ at $b$ by a purely inseparable equation of the form $$G(x,y,z)=x^p+F(y,z). $$ 

\ind Finitely many point blowups transform $X$ into a scheme which, locally at any point  of order $p$ above $b$, can be defined by an equation $G(x,y,z)=x^p+F(y,z)  $ with $F$ a monomial. 
\end{Theo}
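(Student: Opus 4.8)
The plan is to prove part (i) by a careful case analysis on the position of the point $a'$ in $E$ relative to the chosen flag and on the position of the Newton polygon $N(F)$ in the positive quadrant, and then to deduce part (ii) as a formal consequence of part (i) together with the combinatorics of the $x^p$-equation. For part (i), the starting point is to pick subordinate coordinates $(y,z)$ realizing the height $\height_a(f)$ and, among those, the slope $\Slope_a(f)$, so that $i_a(f,\Fe)=(\height_a(f)-\Bonus_a(f),\Slope_a(f))$ is read off from the highest two vertices of $N(F)$. There are essentially three types of points $a'\in E$: the origin of the $y$-chart, the origin of the $z$-chart (these two lying on $E$ and on the strict transform of the flag, respectively not), and the ``free'' points on $E$ parametrized by $t\in K^*$ where one substitutes $z\mapsto z(y+t)$ or the symmetric monomial substitution. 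For each chart one writes down the transform $F'$ of $F$ explicitly via the monomial substitution followed by factoring the maximal power of the exceptional variable, computes $\Degreey(F')$, $\Ordery(F')$ and hence $\Height(F')$, and then reads the new slope.

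First I would dispose of the ``easy'' charts. At the origin of the chart where $z$ is the exceptional variable (so coordinates become $(y,z)$ with $y'=y$, $z'=z$, $z=z'$, $y=y'z'$ — i.e.\ the chart away from the strict transform of $\Fe$), the degree in $y$ is preserved while $\Ordery$ can only go up, so $\Height$ drops or stays equal; if it stays equal one checks the slope strictly decreases, using that $N(F)$ was not a quadrant and $f$ not quasi-monomial. The genuinely delicate chart is the one meeting the strict transform of the flag, and here is where the kangaroo phenomenon and Moh's bound enter: $\Height(F')$ can exceed $\Height(F)$ by at most $1$, and precisely in the adjacent/close situations. This is exactly what the bonus is designed to absorb: if $F$ is adjacent then $\Bonus(F)=1+\delta$ while $F'$ is at worst close so $\Bonus(F')\le\varepsilon$, and since $\height$ rose by at most $1$ we get $\Revheight(F')\le \height(F)+1-\varepsilon$ against $\Revheight(F)=\height(F)-1-\delta$; as $0<\varepsilon<\delta<1$ this is a strict drop of more than $1$. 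The close and distant cases are analogous but with smaller margins, and in the boundary case where $\Revheight$ stays exactly constant one falls back on the slope, which must then decrease by the congruence argument sketched in the last remark of section \ref{Kap1} (an increase of the height forces $d\equiv 0\bmod p$, which constrains the initial form and hence the slope of the transform).

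The main obstacle I expect is precisely the borderline bookkeeping in this flag-meeting chart: one must verify that the bonus is never ``used twice'', i.e.\ that after a blowup which consumed a $1+\delta$ bonus the transform is not again adjacent (or, if it is, that the height dropped enough to compensate), and one must check that the slope component is the right tie-breaker in every residual case of equality, including the subtle situation where the highest vertex of $N(F)$ sits on the $z$-axis after the move. Controlling this requires the explicit classification of kangaroo points from \cite{Hausera} applied to the initial form $F_d$ together with Moh's inequality, and tracking how the two governing vertices $(\alpha_1,\beta_1),(\alpha_2,\beta_2)$ transform; this is where Zeillinger's observation that an increase of the height is always accompanied by the Newton polygon approaching a coordinate axis is used to guarantee that the adjacency improves strictly.

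For part (ii), I would argue that since $i_{a'}(f',\Fe')$ lies in the well-ordered set $\N_{\delta,\varepsilon}\times(\Q\cup\{\infty\})$ (lexicographically, with the understood ordering) and strictly decreases at every closed point of every exceptional divisor by part (i), no infinite sequence of point blowups over $b$ can keep $f$ non-monomial; hence after finitely many point blowups $f$ becomes monomial at every point of order $p$ above $b$, which by Remark \ref{comp} means $G=x^p+y^mz^nA(y,z)$ with $A(0,0)\ne0$ there, i.e.\ $F$ is a monomial up to a unit. (The excluded quasi-monomial case and the passage from ``$f$ monomial'' to ``$\Order(G)<p$'' are handled separately in sections \ref{nonincrease} and \ref{Kap5}, so they are not needed for the statement as phrased.) One should also remark that upper semicontinuity of $i_a(f)$ among closed points, needed to know that only finitely many points per divisor are non-monomial, follows from the analysis in section \ref{termination}; I would cite that rather than reprove it here.
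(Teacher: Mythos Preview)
Your overall strategy---case analysis on the three types of points $a'\in E$ (yielding the translational, horizontal, and vertical moves), followed by the well-ordering argument for part (ii)---matches the paper. But the core computation in the translational case is backwards, and this is a genuine gap.

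You write: ``if $F$ is adjacent then $\Bonus(F)=1+\delta$ while $F'$ is at worst close so $\Bonus(F')\le\varepsilon$, and since $\height$ rose by at most $1$ we get $\Revheight(F')\le \height(F)+1-\varepsilon$ against $\Revheight(F)=\height(F)-1-\delta$.'' First, the inequality you claim does not go the right way: $\height(F)+1-\varepsilon$ is \emph{larger} than $\height(F)-1-\delta$, not smaller, so this would show an \emph{increase} of the intricacy. Second, and more importantly, the direction of the adjacency change is inverted. The key observation (Remark~\ref{Ordery}) is that when the translational move is forced one has $\Ordery(F^*)<\Ordery(F)$; thus $F^*$ is \emph{more} adjacent than $F$, not less. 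In particular, if $F$ is already adjacent the translational move is never forced, and the relevant cases are distant$\to$distant, distant$\to$close, distant$\to$adjacent, close$\to$adjacent. The bonus is designed precisely so that the \emph{gain} in bonus (from $0$ or $\varepsilon$ up to $1+\delta$) absorbs Moh's possible $+1$ in the height, not the other way around. Concretely, combining Lemmata~\ref{degreey} and~\ref{lem6} gives $\Revheight(f^*)\le \Height(F)-(2-\Adj(F^*)+\Bonus(F^*)-\Parity(d))$, and one checks in each of the four cases that this is strictly below $\Height(F)-\Bonus(F)$.

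There is a second gap in the equality case. When $\Revheight(f^*)=\Revheight(f)$ (which happens only under horizontal moves), you say the slope ``must then decrease by the congruence argument''. The inequality $\Slope(F^*)<\Slope(F)$ is indeed elementary (the vertices move horizontally, and $\Slope(F^*)=\Slope(F)-\alpha_1$). But this is not yet $\Slope_a(f^*)<\Slope_a(f)$, because the slope is a \emph{maximum} over subordinate coordinates at $a'$. You must show that any subordinate coordinate change $\varphi'$ at $a'$ realizing $\Slope(f^*)$ lifts to a subordinate change $\varphi$ at $a$ through which the blowup remains monomial; the paper does this explicitly via $\varphi'(y,z)=(y+A(z),z)$ lifting to $\varphi(y,z)=(y+zA(z),z)$. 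Without this commutation your tie-breaker argument is incomplete. Finally, a minor point for (ii): $\N_{\delta,\varepsilon}\times(\Q\cup\{\infty\})$ is not well-ordered lexicographically; what actually terminates the process is that the first component lies in a well-ordered set and, while it is constant, the slope drops by the fixed positive integer $\alpha_1=\Degreey(F)$ at each horizontal step.
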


%-------------------------------------------------
%-------------------------------------------------
%-------------------------------------------------
%-------------------------------------------------
%-------------------------------------------------
%-------------------------------------------------
%-------------------------------------------------

 \begin{Bem} \label{strict} It is easy to see that it doesn't make any
difference in proving Theorem \ref{Theo1}  if we work with the strict transform $f'$ or the total transform $f^*$ of $f$ under the point blowup $\pi$, because their Newton polygons differ just in a displacement by $p$ units in either the $y$- or the $z$-direction
(depending on the point $a'$ of $E$). The measure \height \ is hence the same for both transforms. 
Moreover such a displacement may only increase the adjacency and consequently  decreases the \revheight, i.e., $\Revheight(f') \leq \Revheight(f^*)$. Furthermore the measure \slope \ is, as we will see, only needed in the horizontal move (see  below) and in this situation $\Slope(f')=\Slope(f^*)$ holds.
And since computations are
simpler when using the total transform $f^*$, we will show that $i_{a'}(f^*) < i_a(f)$,
which then immediately implies $i_{a'}(f')<i_a(f)$. \\
\end{Bem}

\begin{Bem} \label{Bem4} 
The transformation of the equation of our original surface $G(x,y,z)=x^p+F(y,z)$ under blowup of $\A_K^3$ in a point $b=(b_1,a)$, fulfilling $\textnormal{ord}_b(G)=p$, can be read off from the transformation rule for $F$
under the point blowup $\pi$ of $\A_K^2$ in $a$ as follows: Let
$(y,z)$ and $(x,y,z)$ be regular parameter systems of the local
rings $\widehat R_a$ and $\widehat S_{b}$ of $\A_K^2$ at $a$ and $\A_K^3$ at $b$.
Furthermore, denote by $F(y,z)$
respectively $G(x,y,z)$ the expansions at $a$ and $b$ of elements $f \in R/R^p$ and $g
\in S$ with respect to the chosen local coordinates. With $g^*$ and $g' \in S'$ we denote the total respectively strict
transform of $g \in S$, where $S'$ denotes the
Rees-algebra of $S$ corresponding to the blowup of $\A_K^3$ in $b$.
The chart-expressions for the total transform of $G$ under the blowup $\tau: \widetilde \A_K^3 \rightarrow \A_K^3$
with center $b=0$ look as follows: 
\begin{itemize}
\item[] $x$-chart: $G^*(x,y,z)= x^p \cdot (1+x^{-p}F(xy,xz)),$
\item[] $y$-chart: $G^*(x,y,z)=y^p \cdot (x^p+y^{-p}F(y,yz)),$
\item[] $z$-chart: $G^*(x,y,z)=z^p \cdot (x^p+z^{-p}F(yz,z)).$
\end{itemize}  
The $x$-chart can be discarded since the strict transform of the surface does not pass through its origin which is the only point of the $x$-chart not contained in the $y$- or $z$-chart. In the $y$-chart
(and symmetrically in the $z$-chart) either $\textnormal{ord}(g') <
\textnormal{ord}(g)=p$ and we are done, or $\textnormal{ord}(g')=\textnormal{ord}(g)=p$, say
$\textnormal{ord}(y^{-p}F(y,yz))
\geq p$, hence $G'(x,y,z)=x^p+y^{-p}F(y,yz)$ is of the same
type as $G$. Since multiplying $F(y,yz)$ by $y^{-p}$ again has
only the effect of a displacement when regarding the
corresponding Newton polygons, it is sufficient to study the total transform of $F$ under the
blowup $\pi$ of $\A_K^2$ with the two chart
expressions $F^*(y,z)=F(y,yz)$ and $F^*(y,z)=F(yz,z)$.  \\
\end{Bem} 

%----------------------------------------------------------------------------------------------------------------------
%----------------------------------------------------------------------------------------------------------------------

\ind Fix subordinate coordinates $(y,z) \in \Ce_{\Fe}$ at the closed point $a \in \A^2_K$ realizing the \height \ of $f$, i.e., satisfying $$\Height(F)=\Height(f),$$ where $F(y,z)$ denotes the expansion of $f \in R/R^p$ with respect to $y$ and $z$. Let $a' \in E=\pi^{-1}(a)$ be a point above $a$. There then exists a unique constant $t\in K$
such that the blowup $\widehat R_a\rightarrow \widehat R'_{a'}$ is given either 
by $(y,z)\rightarrow (yz+tz,z)$ or $(y,z)\rightarrow (y,yz)$.
Accordingly, and distinguishing
between $t=0$ or not, $f^*$ has expansion $F^*$ in $\widehat
R'_{a'}\cong K[[y,z]]$, where $(y,z)$ now denote local coordinates subordinate to the induced flag $\Fe'$ at $a'$,  given by the following formulas: \medskip
\begin{itemize} 

\item[] (T) Translational move: $F^*(y,z)=
F(yz+tz, z)$, $t\in K^*$,\medskip

\item[] (H) Horizontal move: $F^*(y,z)=F(yz,z)$,\medskip

\item[] (V) Vertical move: $F^*(y,z)=F(y,yz)$. \medskip
\end{itemize} 
\rem{These formulas are compatible with the flags at
$a$ and $a'$ in the sense that subordinate coordinate changes are still flexible enough to render local blowups monomial.  This is shown in more generality in \cite{MR2063657}. \\ \\}
The naming of the moves (H) and (V) stems from the corresponding transformations of the
Newton polygons.  Note that there could be several different subordinate coordinates in $\Ce_{\Fe}$ realizing the \height \ of $f$.
If possible, we will choose among all these minimizing subordinate coordinates  a pair $(y,z) \in \Ce_{\Fe}$ in which the blowup $\widehat{R}_a \rightarrow \widehat R'_{a'}$ is {\it monomial}
(moves (H) and (V)).  

The subtlety of the proof that the adjusted height vector drops under blowup for all points $a' \in E$ is due to the fact that the three moves change the Newton polygon in pretty different ways. The invariant has to drop lexicographically under {\it all} these moves. The key ingredients for this are the following:

Under translational moves, the height can at most increase by $1$ (by Moh's bound), and if it does, the Newton polygon was not adjacent before the blowup (by Hauser's description of the kangaroo phenomenon), but must be adjacent afterwards (by the definition of the height). 

Under horizontal moves, the height cannot increase (because the vertices of the Newton polygon move horizontally), the adjacency remains the same (for the same reason). Moreover, in a sequence of horizontal moves, the height must eventually drop (because the slope decreases in each move for which the height remains the same). 

Under vertical moves, the height decreases at least by $2$ (by a simple computation, with the exception of quasi-monomials), and the polygon may quit being adjacent or close.

From  these observations it is straightforward to see how the bonus has to be defined (in dependence of the adjacency) so that one obtains a decrease of the adjusted invariant under each blowup: Take value $0$ for $f$ distant, $\varepsilon$ for $f$ close, $1+\delta$ for $f$ adjacent, with $\varepsilon < \delta$. This choice yields an adjusted height vector that interpolates the ``graph'' of the original height vector over a sequence of blowups by a strictly decreasing function.  Induction applies! 
  
Let us see this argument in more detail. Let $a$ and $a'$ be fixed, and recall  that $\Revheight(f)=\Height(f)-\Bonus(f)$ (see section \ref{Kap1}). If there don't exist  subordinate coordinates at $a$ realizing the \height \ of $f$ and so that the blowup is monomial (i.e., if the translational move (T) is {\it forced}), then one always has $$\Revheight(f^*)<\Revheight(f),$$ where $f^*$ denotes the equivalence class modulo $p$-th powers of the transform $F^*(y,z)=F(yz+tz,z)$ with $t \neq 0$.  

\ind Next assume that one can choose subordinate coordinates $(y,z)$ at $a$ realizing the \height \ of $f$ such that $a'$ is one of the two origins of $\widetilde{\A }^2$, say cases (H) or (V) given by monomial substitutions occur. We then have

\begin{eqnarray*}
\Revheight(f^*)=\Height(f^*)-\Bonus(f^*)  &\leq& \Height(F^*)-\Bonus(F^*) \\ &\leq& \Height(F)-\Bonus(F) \\ &=& \Revheight(f),
\end{eqnarray*}
except for very special situations where $f$ is a quasi-monomial (these can be resolved directly, see section \ref{vertical-move}). Moreover, excluding these exceptions, the inequality is strict for move (V).
In case of equality
$$\Revheight(f^*)=\Revheight(f)$$ when applying move (H), we use the second component of the
invariant and show first that $$\Slope(F^*)<\Slope(F) \leq \Slope(f).$$
Realizing
$\Slope(f^*)$ is by definition done by maximizing $\Slope(F^*)$ over all coordinate choices subordinate to the flag
$\mathcal{G}$ at $a'$. It has to be shown that the necessary coordinate change $\varphi'$ at $a'$
stems from a coordinate change $\varphi$ at $a$ subordinate to $\Fe$ (see section \ref{maximieren}). Or said differently, one has to prove that the following diagram commutes, where $(y,z)$ denote local subordinate coordinates at $a$ and where the blowup $\pi: \widehat R_a \rightarrow \widehat R'_{a'}$ is given by $(y,z) \rightarrow (yz,z)$ (inducing subordinate local coordinates to the flag $\Fe'$ at $a'$ on $R'_{a'}$)

\begin{equation*}\begin{CD} 
\widehat R'_{a'} @>{\varphi'}>> \widehat R'_{a'}  \\ 
@A{\pi}AA @A{\pi}AA \\ 
 \widehat R_a @>{\varphi}>> \widehat R_a\\  \\
\end{CD}\end{equation*} 

\noindent and $\varphi'(y,z)=(y+A(z),z)$, $\varphi(y,z)=(y+A(z) \cdot z,z)$ with $A \in K[[z]]$.

\ind The general behavior of the \height \ is illustrated in figure \ref{diagram-height}: it may increase in one step (but only under translational moves), but decreases in the long run of the resolution process. \\

	\begin{figure}[h]
	\begin{center}
	\includegraphics[scale=0.35]{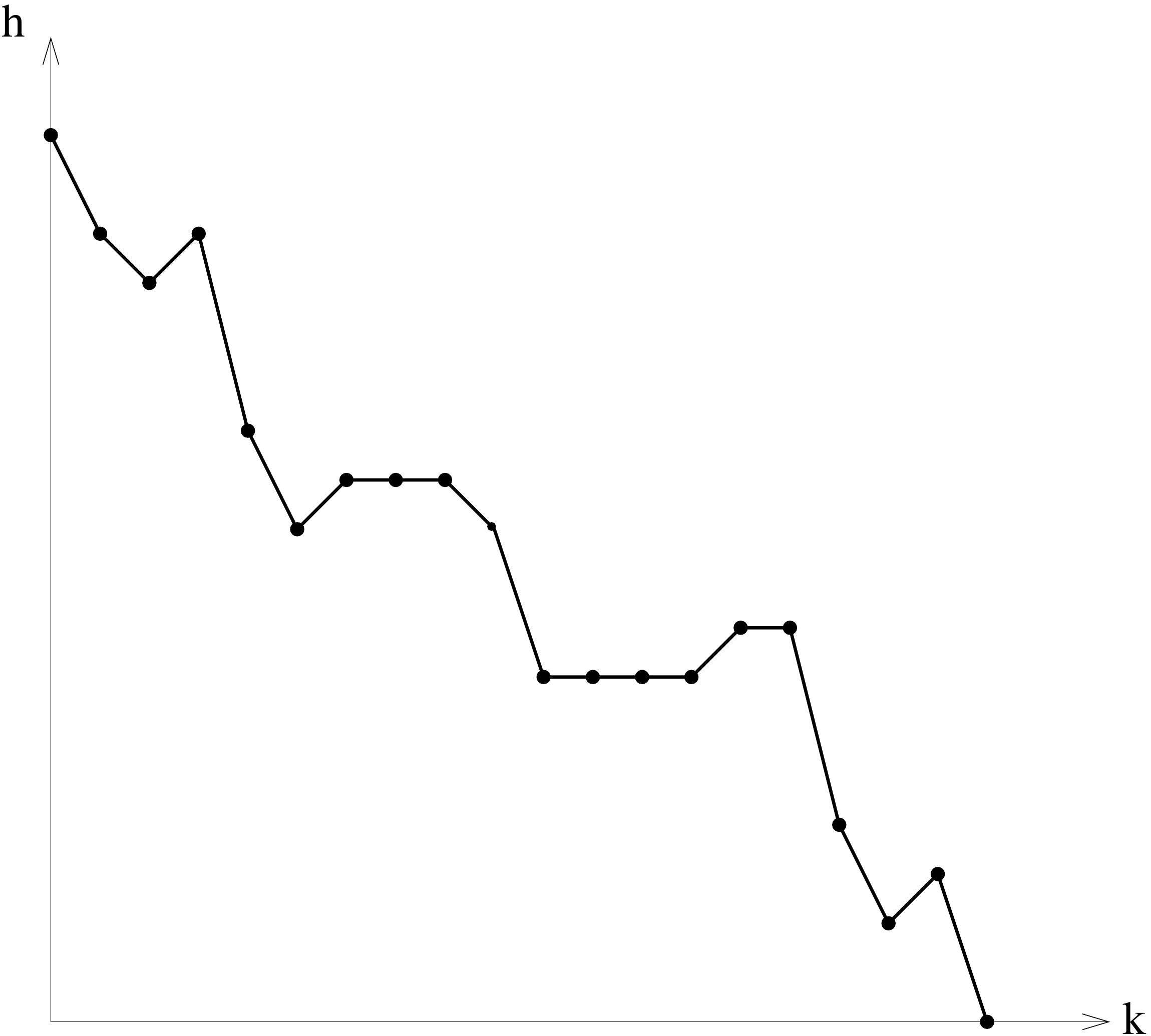}   
	\\ 
	[1ex]  \begin{minipage}{12cm}
       \begin{abbildung}\label{diagram-height} \begin{center}
       Possible behavior of the \height \ under blowups ($h$ denotes $\height(f)$, $k$ the number of blowups); an increase can only occur under translational moves. \\ \end{center} \end{abbildung}
      \end{minipage}
	\end{center}
	\end{figure}

%%%%%%%%%%%%%%%%%%%%%%%%%%%%%%%%%%%%%%%%%%%%%%%%%%%%%%%%%%%%%%%%%%%%%%%%%%%%%%%%%%%%%%%%%%%%%%%%%%%%%%%%%%%%%%%%%%%%%%%%%%%%%%%%%%%%%%%%%%%%%%%%%%%%%%%%%%%%%%%%%%%%%%%%%%%%%%%%%%%%%%%%%%%%%%%%%%%%%%%%%%%%%%%%

\section{Proof of Theorem \ref{Theo1}} \label{Kap3}
$ $ \\
\noindent We show in section \ref{nonincrease} that the first component  of the adjusted height vector $i_a(f)=(\Revheight(f), \Slope(f))$ does not increase under point blowup (except for quasi-monomials). In section \ref{maximieren} it is shown that if the intricacy remains the same, the second component of $i_a(f)$  decreases.\\ 

\subsection{Non-increase of the \revheight} \label{nonincrease}
$ $ \\
%\\

\noindent The key argument in proving Theorem \ref{Theo1} is the following:

\begin{Prop} \label{intricacy}
Let $f$ be an element of $Q=R/R^p$, which is not a quasi-monomial at a given closed point $a$ of $\A^2$. Fix a local flag $\Fe$  in $\A^2$ at $a$ and denote by $\Fe'$ the induced flag at $a'\in E$. Let $F \in K[[y,z]]$ be the expansion of $f$ with respect to subordinate coordinates $(y,z) \in \Ce_{\Fe}$ realizing the height  of $f$. Furthermore let $F^*(y,z)$ be one of the transformations $F^*(y,z)=F(yz+tz,z)$, with $t\in K$, or $F^*(y,z)=F(y,yz)$ and let $f^*$ be the corresponding element in $R'/R'^p$. Then $$\Revheight(f^*) \leq \Revheight(f).$$ Moreover, if the translational move (T) is forced, or if there exist subordinate coordinates realizing the height of $f$ such that the blowup $\widehat {R}_a \rightarrow \widehat R'_{a'}$ is given by move (V), then $$\Revheight(f^*) < \Revheight(f).$$   
\end{Prop}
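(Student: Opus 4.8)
\noindent The plan is to reduce the asserted inequality to a combinatorial statement about the Newton polygon of $F$ and its image under the relevant substitution, and then to dispose of the three kinds of moves in turn; the translational move, and especially the forced translational case, will be the main obstacle. First I would set up the \emph{reduction}: since $(y,z)$ realise $\height_a(f)$ we have $\Revheight(f)=\Height(F)-\Bonus(F)$, the bonus being read off from $\Ordery(F)$; and in each of the three moves the coordinates $(y,z)$ figuring in the formula for $F^*$ are subordinate to the induced flag $\Fe'$ at $a'$ (the compatibility of the moves with the flags recalled in Section \ref{Kap2}), so $F^*$ is the expansion of $f^*$ in a system belonging to $\Ce_{\Fe'}$ and therefore $\Revheight(f^*)\le\Height(F^*)-\Bonus(F^*)$. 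It thus suffices to prove $\Height(F^*)-\Bonus(F^*)\le\Height(F)-\Bonus(F)$, strictly in the two asserted cases. Throughout I would use that $\Bonus$ depends only on $\Ordery$, is weakly decreasing in it, and takes the values $1+\delta$, $\varepsilon$, $0$ with $0<\varepsilon<\delta<1$, and that the Newton polygon is multiplicative, so that $\Height$, $\Ordery$, $\Degreey$ are additive on products. (We may assume $f$ is not monomial, as such $f$ never occur in the resolution process.)

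\noindent For the \emph{vertical move} $F^*=F(y,yz)$ the exponents undergo the shear $(\alpha,\beta)\mapsto(\alpha+\beta,\beta)$, which fixes $\textnormal{ord}_z$ and the bottom edge; one reads off $\Degreey(F^*)=\Degreey(F)+\Orderz(F)$ and $\Ordery(F^*)=\Order(F)$, hence $\Height(F)-\Height(F^*)=\Order(F)-\Ordery(F)-\Orderz(F)\ge0$. Since $f$ is not monomial, $(\Ordery(F),\Orderz(F))$ is not an exponent of $F$ (else $N(F)$ were a quadrant), so the drop is $\ge1$; if moreover $f$ is adjacent, an order-realising vertex can be neither $(0,\Orderz(F)+1)$ (forcing $\Width(F)=1$, a quasi-monomial) nor $(1,\Orderz(F))$ (forcing $\Degreey(F)\le1$, against Remark \ref{bemerkung1}), so the drop is $\ge2$. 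As $\Ordery(F^*)\ge\Ordery(F)$, the bonus increases by at most $1+\delta$ when $f$ is adjacent and at most $\varepsilon$ otherwise; together with the drop estimates and $\delta<1$ this gives the strict inequality, settling the vertical move. For the \emph{horizontal move} $F^*=F(yz,z)$ ($t=0$) the shear is $(\alpha,\beta)\mapsto(\alpha,\alpha+\beta)$, which leaves every $y$-exponent unchanged and pushes no exponent past the bottom-right vertex, so $\Ordery$ and $\Degreey$, hence $\Height$ and $\Bonus$, are unchanged and the required inequality is an equality.

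\noindent The delicate case is the \emph{translational move} $F^*=F(yz+tz,z)$, $t\ne0$. I would write $F=(y-tz)^kF_k$ with $(y-tz)\nmid F_k$; then $F^*=(yz)^kF_k^{\,*}$ with $F_k^{\,*}=F_k(yz+tz,z)$, and $(y-tz)\nmid F_k$ forces $\Ordery(F_k^{\,*})=0$. Additivity gives $\Ordery(F^*)=k$, $\Height(F^*)=\Height(F_k^{\,*})$, $\Height(F)=k+\Height(F_k)$. Now I would invoke Moh's bound $\Height(F_k^{\,*})\le\Height(F_k)+1$ \cite{MR935710} together with Hauser's classification of kangaroo points \cite{Hausera}: under a translational move the height increases only from a non-adjacent configuration, and then the configuration is adjacent afterwards. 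If $f$ is adjacent then $k=0$ (otherwise the subordinate change $y\mapsto y-tz$ would give a representative of $f$ with $\Ordery\ge1$), so $F^*$ is again adjacent and, by Hauser, $\Height(F^*)\le\Height(F)$, whence the inequality. If $f$ is close or distant one has $\Ordery(F_k)=\Ordery(F)$ and distinguishes by $k$: for $k\ge1$ one gets $\Height(F^*)\le\Height(F)-k+1$ with $\Ordery(F^*)=k$, which suffices (strictly for $k\ge2$); for $k=0$, $F^*$ is adjacent, so a height increase by $1$ is offset by the bonus jump from $\le\varepsilon$ to $1+\delta$, and $\varepsilon<\delta$ even yields strictness. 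Finally, when the translational move is forced, Hauser's description of the shape of the initial form $F_d$ in that situation shows that $F^*$ becomes adjacent and that the change of $\Height$ is strictly favourable against the change of $\Bonus$, so that $\Revheight(f^*)<\Revheight(f)$.

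\noindent The main obstacle is precisely this last point, the forced translational move: one must balance the possible increase of $\Height$ against the possible decrease of $\Bonus$, and it is Hauser's analysis of the kangaroo phenomenon — coupling a height increase to the Newton polygon moving against the $z$-axis, i.e.\ becoming adjacent — that makes the chosen bonus values $0<\varepsilon<\delta<1$ produce the net strict decrease of $\Revheight$.
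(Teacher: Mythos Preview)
Your vertical and horizontal arguments are essentially the paper's. One slip in (H): under $(\alpha,\beta)\mapsto(\alpha,\alpha+\beta)$ the top vertex of $N(F)$ can be absorbed into the interior of $N(F^*)$, so $\Degreey$ may strictly drop (take $F=y^2+z$). What survives is $\Ordery(F^*)=\Ordery(F)$ and $\Degreey(F^*)\le\Degreey(F)$, which is all you need.

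For the translational move you take a genuinely different route from the paper. You factor $F=(y-tz)^kF_k$ to obtain $\Ordery(F^*)=k$ exactly and then split on $k$ and on the adjacency of $f$. The paper instead uses the observation (Remark~\ref{Ordery}) that a \emph{forced} translational move forces $\Ordery(F^*)<\Ordery(F)$, restricts to the four transitions distant$\to$distant/close/adjacent and close$\to$adjacent, and chains Lemmata~\ref{degreey}, \ref{lem5}, \ref{lem6} into the single estimate $\Revheight(f^*)\le\Height(F)-(2-\Adj(F^*)+\Bonus(F^*)-\Parity(d))$, checking the four cases by hand. Your factorisation is a nice alternative because it treats forced and non-forced moves uniformly, but two points need repair.

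First, invoking Moh's bound as ``$\Height(F_k^*)\le\Height(F_k)+1$'' is not justified: $F_k$ may contain $p$-th power monomials, and the proof of Lemma~\ref{lem5} uses precisely that the extreme exponent of the initial form lies outside $p\cdot\N^2$. The inequality you want does follow, but from Lemma~\ref{lem6} applied to $F$ (which is $p$-th-power-free): $\Degreey(F^*)\le\Height(F_d)+\Parity(d)$ together with $\Degreey(F^*)=k+\Degreey(F_k^*)$ and $\Height(F_d)=k+\Height((F_k)_e)$ gives what you need.

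Second, and this is the real gap, your ``Finally'' paragraph for the forced case does not work. Your case analysis yields only the weak inequality when $f$ is adjacent (then $k=0$) or when $f$ is close with $k=1$; you then assert that under a forced move ``$F^*$ becomes adjacent'', which is false (take $\Ordery(F)=5$, $\Ordery(F^*)=3$). The correct closure is the paper's Remark~\ref{Ordery}: in both of your non-strict cases one has $\Ordery(F^*)=\Ordery(F)$ (namely $0$ resp.\ $1$), and then the linear change $y\mapsto y+tz$ produces height-minimising subordinate coordinates in which the blowup is the horizontal move, so the translational move was \emph{not} forced. With that remark your case split does prove the proposition; without it the strict inequality in the forced case is not established.
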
 

\noindent Recall that adjacent series $F$ with $\Width(F)=1$ are called quasi-monomials. Quasi-monomials are not resolved directly, but if $F$ is such, the order of $G$ is decreased by line blowups.
Note that by the minimality of the height, there is no need to realize the \height \ of $f^*$ in $R'/R'^p$. 
The proof of Proposition \ref{intricacy} falls naturally into three parts corresponding to the
three different moves (T), (H), (V) defined in section  \ref{Kap2}.  \\

% ----------------------------------------------

\subsection*{(T) Translational moves}  \label{Kap3.1}
$ $ \medskip

\noindent The goal of this paragraph is to show Proposition \ref{intricacy} for the translational move $F^*(y,z)=F(yz+tz,z)$ with $t \in K^*$.  In particular we prove:  the \revheight \ decreases if there don't exist minimizing subordinate coordinates such that $a' \in E$ is one of the origins of the two charts of the blowup. 
Since situations where a translational move is required are the most delicate ones, this section provides the main arguments for proving Theorem \ref{Theo1}.

\ind In the following $d=\textnormal{ord}(f)$ denotes the order of $f$ in $a$ and $f_d$ its initial form.
Furthermore  the {\it parity} $\Parity(d)$ of $d$ is set as $1$ if
$d \equiv 0 \textnormal{ mod } p$, and $0$ otherwise.

\ind In the sequel it will be assumed throughout that there don't exist subordinate coordinates at $a$ realizing the \height \ of $f$ such that the blowup $\widehat R_a \rightarrow \widehat R'_{a'}$ is monomial. Or said differently, there don't exist minimizing subordinate coordinates at $a$ such that $a' \in E$ is one of the origins of the two charts of the blowup. In this situation the total transform $f^*$ of $f$ under the blowup $\pi$ is given as the equivalence class of the transform $F^*(y,z)=F(yz+tz,z)$, where $t \in K^*$, of a representative $F(y,z)$ of $f$ with $\Height(F)=\Height(f)$. Fix such minimizing subordinate coordinates $(y,z) \in \Ce_{\Fe}$ and denote by $F(y,z)$ in the sequel always the expansion of $f$ with respect to these chosen coordinates.

\begin{Bem} \label{Ordery}
It can be easily verified that the situation $\textnormal{ord}_y(F^*) > \textnormal{ord}_y(F)$ cannot occur. This is due to the fact that the transformation $(y,z) \rightarrow (yz+tz,z)$ with $t \neq 0$ can be decomposed into a linear subordinate coordinate change $(y,z) \rightarrow (y+tz,z)$ followed by a horizontal move $(y,z) \rightarrow (yz,z)$. Due to the minimality of $\Height(F)$ (which corresponds to the maximality of $\Ordery(F)$), the first one does not increase the order with respect to the variable $y$. The second transformation clearly preserves it.  

Moreover, in the case that $\textnormal{ord}_y(F^*)=\textnormal{ord}_y(F)$ the same argumentation shows that there exist subordinate coordinates realizing the \height \ of $f$ such that the blowup can be rendered monomial. By the assumption at the beginning of this section, one would thus choose these new minimizing coordinates and would hence be left with the examination of a horizontal move (see subsection (H) below). 
 
Altogether this shows that for the study of translational moves it suffices to investigate the situations where $\textnormal{ord}_y(F^*)<\textnormal{ord}_y(F)$. \\
\end{Bem}

\ind The proof of Proposition \ref{intricacy} in the case of translational moves is divided into a series of lemmata. Recall that the {\it adjacency} $\Adj(F)$ of $F$ is $2$, $1$ or $0$ according to $F$ being {\it adjacent}, $\Ordery(F)=0$,
{\it close}, $\Ordery(F)=1$, or {\it distant}, $\Ordery(F)\geq 2$. By definition, $\Adj(F)$  takes the same value, $\Adj(f)$, for all coordinates realizing $\height(f)$, because $\Ordery(F)$ does. 

% --------------
\begin{Lem} \label{degreey}
Every $F$ satisfies 
$$\Height(F) \leq \textnormal{deg}_y(F) -2 + \Adj(F).$$
\end{Lem}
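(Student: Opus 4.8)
The plan is to substitute the definition $\Height(F) = \Degreey(F) - \Ordery(F)$ directly into the claimed inequality and cancel the common term $\Degreey(F)$ from both sides. This reduces the statement to the purely numerical inequality
$$
\Adj(F) + \Ordery(F) \geq 2,
$$
which no longer involves the Newton polygon beyond the single quantity $\Ordery(F)$.

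I would then verify this inequality by the obvious trichotomy used to define $\Adj(F)$. If $F$ is adjacent, then $\Ordery(F) = 0$ and $\Adj(F) = 2$, so the left-hand side equals $2$. If $F$ is close, then $\Ordery(F) = 1$ and $\Adj(F) = 1$, so the left-hand side is again $2$. If $F$ is distant, then $\Adj(F) = 0$ but $\Ordery(F) \geq 2$ by definition, so the left-hand side is at least $2$. In every case the inequality holds (with equality except possibly in the distant case), which completes the argument.

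I do not anticipate any genuine obstacle here: the lemma is an immediate bookkeeping consequence of how $\Height$ and $\Adj$ were set up, and the only thing to be careful about is matching the three regimes of $\Adj(F)$ correctly with the corresponding ranges of $\Ordery(F)$. The one point worth stating explicitly for later use is that equality $\Height(F) = \Degreey(F) - 2 + \Adj(F)$ holds precisely when $\Ordery(F) \leq 2$, i.e. when $F$ is adjacent, close, or distant with $\Ordery(F) = 2$; this sharp form is what will be invoked when analysing the borderline cases of the translational move.
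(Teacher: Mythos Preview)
Your proposal is correct and is exactly the unpacking of the paper's one-line proof ``This is clear from the definitions.'' You have simply made explicit the substitution $\Height(F)=\Degreey(F)-\Ordery(F)$ and the three-case check of $\Adj(F)+\Ordery(F)\geq 2$, which is precisely what the definitions give.
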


\begin{proof} 
This is clear from the definitions.  
\end{proof}

% --------------

\ind The next result is due to Moh (cf. Proposition 2, p. 989 in \cite{MR935710}, or Theorem 3 in \cite{Hausera}):

%% MOH

\begin{Lem} \label{lem5}
Let $F_d$ be homogenous of degree $d$. Set $F_d^+(y,z)=F_d(y+tz,z)$ with $t
\neq 0$. Then
$$\Ordery(F_d^+) \leq \Height(F_d) + \Parity(d).$$
\end{Lem}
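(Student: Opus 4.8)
The statement compares the $y$-order of the translate $F_d^+(y,z)=F_d(y+tz,z)$ of a homogeneous form $F_d$ of degree $d$ with the height of $F_d$ (plus a parity correction). Since $F_d$ is homogeneous, its Newton polygon is determined by its roots: writing $F_d=c\cdot\prod_i(y-\lambda_i z)^{m_i}\cdot z^{r}$ with distinct $\lambda_i\in K$, $\lambda_i\neq 0$ (the $z$-factor $z^r$ collecting the vanishing along $z=0$), one has $\Ordery(F_d)=r$ only if some $\lambda_i=0$; in general $\Ordery(F_d)=$ the multiplicity of $y$ as a factor. The key observation is that $\Height(F_d)=\Degreey(F_d)-\Ordery(F_d)=\bigl(\sum_i m_i\bigr)$ if $y\nmid F_d$, and more generally equals $d-r-(\text{mult.\ of }y)$; equivalently it counts, with multiplicity, the number of roots $\lambda_i$ that are \emph{nonzero}. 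After the translation $y\mapsto y+tz$ with $t\neq 0$, the roots shift to $\lambda_i-t$, so $\Ordery(F_d^+)$ is the multiplicity with which $y$ divides $F_d^+$, i.e.\ the multiplicity of the root $\lambda_i=t$ in the original $F_d$ (zero if $t$ is not a root).

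\textbf{Main steps.} First I would reduce to the combinatorial statement just described: $\Ordery(F_d^+)$ equals the multiplicity $m_{i_0}$ of the particular root $\lambda_{i_0}=t$ (interpreting this as $0$ if $t$ is not among the $\lambda_i$), while $\Height(F_d)=\sum_{\lambda_i\neq 0} m_i$. Since $t\neq 0$, the root $t$ is one of the \emph{nonzero} roots (if it occurs at all), so trivially $m_{i_0}\leq\sum_{\lambda_i\neq 0}m_i=\Height(F_d)$, which already gives $\Ordery(F_d^+)\leq\Height(F_d)$. The parity term $\Parity(d)$ is then pure slack: it is $0$ or $1$, so the inequality $\Ordery(F_d^+)\leq\Height(F_d)+\Parity(d)$ follows a fortiori. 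The only situation requiring a word is the edge case where $F_d$ itself is a monomial times a power of $z$ (so all roots are $0$): then $\Height(F_d)=0$, and $F_d^+(y,z)=F_d(y+tz,z)=(\text{const})\,z^s(y+tz)^{d-s}$ has $\Ordery(F_d^+)=0$ as well (the factor $(y+tz)^{d-s}$ is not divisible by $y$), so the bound holds with equality; here the parity term is not even needed.

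\textbf{Where the subtlety lies.} I expect the real content of this lemma — and the reason it is attributed to Moh rather than being immediate — is that in the intended application $F_d$ is \emph{not} treated as a fully factored homogeneous form over $K$ but rather modulo $p$-th powers, so some apparent monomials $z^d$ or $y^d$ (which are $p$-th powers when $p\mid d$) are invisible in the support; equivalently, one is working with the support in $\N^2\setminus p\cdot\N^2$. The genuine obstacle is therefore to track what happens to the \emph{nonzero exponents modulo $p\N^2$} under the substitution: a translate can resurrect a monomial $z^d$ which, when $p\nmid d$, genuinely contributes and forces the extra $+1$, but when $p\mid d$ that monomial is a $p$-th power and must be discarded — this is exactly what $\Parity(d)$ records. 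So the careful part is to show that the loss of $y$-order caused by the translation is bounded by the height plus at most this one extra unit coming from a possibly-reappearing pure $z$-power whose parity is governed by $d \bmod p$; I would make this precise by expanding $F_d(y+tz,z)=\sum_k \binom{\cdot}{k}(\dots)$, isolating the coefficient of the lowest power of $y$, and checking that it is nonzero unless the corresponding exponent lies in $p\N^2$, in which case one loses exactly one more unit accounted for by $\Parity(d)$.
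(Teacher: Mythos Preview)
Your factorization argument is essentially correct when $p\nmid d$, and in that case it matches the paper's treatment (case~(b)): since no monomial $y^\alpha z^{d-\alpha}$ with $\alpha+\beta=d$ lies in $p\cdot\N^2$, the paper's $\Ordery(F_d^+)$ coincides with the ordinary polynomial $y$-order, and your root-counting gives the bound $\Height(F_d)$ directly.

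The genuine gap is the case $p\mid d$. Your ``Main steps'' compute the \emph{polynomial} $y$-order of $F_d^+$, namely the multiplicity of $t$ as a root of $F_d$. But $\Ordery$ in this paper is taken on the support in $\N^2\setminus p\cdot\N^2$, and this can be strictly \emph{larger} than the polynomial $y$-order. Already $p=2$, $F_d=yz$, $t\neq 0$ shows the failure: here $\Height(F_d)=0$ and $t$ is not a root, so your argument predicts $\Ordery(F_d^+)=0$; but $F_d^+=yz+tz^2\equiv yz$ modulo squares, hence $\Ordery(F_d^+)=1$. More dramatically, for $p=2$, $F_d=y^3z+yz^3=yz(y+z)^2$, $t=1$, one finds $F_d^+\equiv y^3z$, so $\Ordery(F_d^+)=3$ while your root count gives only $2$. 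Your last paragraph senses the issue but inverts its direction: discarding the $p$-th-power monomial $y^{v_0}z^{d-v_0}$ pushes $\Ordery$ \emph{up}, not down, and nothing in your sketch shows the next non-$p$-th-power coefficient is nonzero, so the claim ``one loses exactly one more unit'' is unsupported.

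The paper closes this gap by passing to the derivative $\partial_y F_d^+$, which annihilates $p$-th powers and therefore sees the polynomial $F_d^+$ and its representative modulo $p$-th powers identically. Writing $F_d=\sum_{i=0}^k c_i\,y^{m-i}z^{n+i}$ with $k=\Height(F_d)$ and $m+n=d$, one has $(y+tz)^{m-k}z^n\mid F_d^+$; since the vertex $(m-k,n+k)$ lies outside $p\cdot\N^2$ and $p\mid d$, necessarily $p\nmid m-k$, whence $(y+tz)^{m-k-1}z^n\mid\partial_y F_d^+$. On the other side, setting $v=\Ordery(F_d^+)$, one has $p\nmid v$ (else $y^vz^{d-v}$ would be a $p$-th power), so $y^{v-1}\mid\partial_y F_d^+$ and $\partial_y F_d^+\neq 0$. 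As $y$ and $y+tz$ are coprime for $t\neq 0$, the product $z^n(y+tz)^{m-k-1}y^{v-1}$ divides the degree-$(d-1)$ form $\partial_y F_d^+$, and comparing degrees forces $v\leq k+1$. This derivative trick is the missing idea in your proposal.
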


% - - - - - - - - - - - - - - - - - - - - - - - - - - - - - - - - - - - - - - - - - - - - - -

\begin{proof} 
 (a) First we consider the case $\Parity(d)=1$. Let $F_d$ have
$\Height(F_d)=k$ and represent it as
$$F_d(y,z)=\sum_{i=0}^k c_i y^{m-i}z^{n+i}$$ with $c_i \in K$, $c_0, c_k \neq
0$, $k,m,n \in \N$, $k \leq m$ and $m+n=d$. Set $v=\Ordery(F_d^+)$.  \\
First observation: The term $y^{m-k}z^n$ divides $F_d$, hence  $F_d^+ \in z^n \langle y+tz \rangle
^{m-k}$. By assumption $m+n \in p \cdot \N$ and $(m-k,n+k) \notin p
\cdot \N^2$, which implies $m-k \notin p \cdot \N$. Therefore
$$\partial_y F_d^+ \in z^{n} \langle y+tz \rangle^{m-k-1}.$$ Second
observation: There exists a polynomial $D$ with $D(0,z) \neq 0$ and
$$F_d^+(y,z)=y^v \cdot D(y,z).$$ Since $v \notin p \cdot \N$ (otherwise the monomial
$y^vz^{d-v}$ occurring in the expansion of $F_d^+$ would be a $p$-th power and thus $\textnormal{ord}_y(F_d^+)>v$),
it
follows that $$\partial_y F_d^+ = vy^{v-1} D(y,z) + y^v
\partial_y D(y,z) \neq 0,$$ and therefore $$\partial_y F_d^+ \in
\langle y \rangle ^{v-1}.$$ Combining these two observations leads
to
$$\partial_y F_d^+ \in z^{n} \langle y+tz \rangle^{m-k-1} \cap
\langle y \rangle ^{v-1}.$$ But $t \neq 0$
and thus $$\partial_y F_d^+ \in z^{n} \langle y+tz \rangle^{m-k-1}
\cdot \langle y \rangle ^{v-1}.$$ Since $\textnormal{ord}(F_d^+)=m+n$
and $\partial_y F_d^+ \neq 0$ it follows that
\begin{eqnarray*}
n+m-k-1+v-1 & \leq & m+n-1.
\end{eqnarray*}
Hence $v \leq k+1$ as required. \\ \noindent (b) In the same manner as in (a)
one can see that in the case $\Parity(d)=0$ one gets $F_d^+ \in z^{n}
\langle y+tz \rangle ^{m-k}$ and $F_d^+ \in \langle y \rangle ^v$.
Combining this and using $t \neq 0$ results in $$F_d^+ \in z^{n}
\langle y+tz \rangle ^{m-k} \cdot \langle y \rangle ^v.$$ From this
it follows that $v \leq k$. \\
\end{proof}

%--------------------------------------------------------------------------------

%% LEMMA

\begin{Lem}\label{lem6} Let $F^*(y,z)=F(yz+tz,z)$ with $t
\neq
0$ and $d = \textnormal{ord}(F)$. Then \\
$$\Degreey(F^*) \leq \Height(F_d)+\Parity(F). $$
\end{Lem}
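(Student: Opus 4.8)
The plan is to reduce the statement about the full translational transform $F^*(y,z)=F(yz+tz,z)$ to the statement about its initial form, which is governed by Moh's bound (Lemma \ref{lem5}). First I would observe that $\Degreey(F^*)=\Ordery\bigl(\widetilde{F^*}(y,0)\bigr)$ by the remark following the definition of $\Degreey$, so it suffices to control the order in $y$ of $F^*(y,0)$ after stripping the appropriate power of $z$; since the substitution $z\mapsto z$ is unchanged, we may work with $F^*(y,z)$ directly and track $\Ordery$. The key point is that $\Degreey(F^*)$ only depends on the lowest-degree behaviour of $F$ along the line being contracted. Concretely, write $F=F_d+F_{d+1}+\cdots$ with $F_d\neq 0$. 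Then $F^*(y,z)=F_d(yz+tz,z)+F_{d+1}(yz+tz,z)+\cdots$, and $F_e(yz+tz,z)=z^e\cdot F_e(y+t,z)\cdot$(after collecting the common $z^e$; more precisely $F_e(yz+tz,z)$ is $z^e$ times $F_e(y+t,1)$ evaluated appropriately). The decisive fact is that the contribution of $F_d$ to $F^*$ carries the lowest power of $z$, namely $z^d$, while every higher homogeneous piece $F_{d+j}$ contributes a strictly higher power $z^{d+j}$. Hence the coefficient of the minimal $z$-power in $F^*$ is exactly (a nonzero multiple of) $F_d(y+t,1)$, and therefore $\Degreey(F^*)$ is the degree in $y$ of this polynomial $F_d(y+t,1)$ up to $p$-th power corrections — which is precisely what Lemma \ref{lem5} bounds.

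The main steps, in order, are: (1) Expand $F^*$ by homogeneous degree and factor out the common power of $z$ from each piece, observing that $F_d$ contributes the strict minimum $z^d$. (2) Identify $\Degreey(F^*)$ with the $y$-order of the restriction of $\widetilde{F^*}$ to $z=0$, and argue that this restriction is controlled solely by $F_d(y+tz,z)=F_d^+(y,z)$ because the higher-order terms vanish to strictly higher $z$-order; here one must be slightly careful that no cancellation among the various $z$-powers can spoil the argument, but since the powers of $z$ are separated by degree this is automatic. (3) Apply Lemma \ref{lem5} to $F_d^+$ to obtain $\Ordery(F_d^+)\leq \Height(F_d)+\Parity(d)$, and note $\Parity(d)=\Parity(F)$ by definition. (4) Conclude $\Degreey(F^*)=\Ordery(F_d^+)\leq \Height(F_d)+\Parity(F)$.

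The main obstacle I expect is step (2): making precise that $\Degreey(F^*)$ is genuinely read off from $F_d^+$ and not contaminated by the higher homogeneous parts $F_{d+j}$. One has to check that when one passes from $F^*(y,z)$ to $\widetilde{F^*}(y,z)=z^{-\beta}F^*(y,z)$ (dividing by the top power of $z$ dividing $F^*$, where $\beta=\Orderz(F^*)$) and then restricts to $z=0$, the resulting polynomial in $y$ is exactly the restriction of $F_d^+$ to its own $z$-minimal slice — i.e. that $\Orderz(F^*)$ equals $\Orderz$ of the $F_d$-contribution, and that the $y$-degree is not artificially lowered by a $p$-th power monomial appearing in $F_d^+$ but not surviving modulo $R'^p$. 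This is the same subtlety handled inside the proof of Lemma \ref{lem5} (the ``$v\notin p\cdot\N$'' observation), and it should transfer verbatim; once this bookkeeping is in place the inequality drops out immediately. A secondary, purely notational nuisance is keeping the two possible chart origins and the role of $t\in K^*$ straight, but since $t\neq 0$ the substitution is the honest affine shift $y\mapsto y+t$ in the relevant slice and no case distinction beyond $\Parity$ is needed.
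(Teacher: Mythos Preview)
Your proposal is correct and follows essentially the same approach as the paper. The paper writes $F=F_d+H$ with $\textnormal{ord}(H)>d$, observes that $F^*=z^d\sum_j c_j'y^j+H(yz+tz,z)$ where the second summand has $z$-order strictly larger than $d$, applies Lemma~\ref{lem5} to bound the $y$-order of the first summand, and reads off $\Degreey(F^*)$ from the highest vertex $(l,d)$; your route through $\widetilde{F^*}(y,0)$ and the identification $\Degreey(F^*)=\Ordery(F_d^+)$ is just a repackaging of the same computation (note the small slip in your first paragraph: you mean the \emph{order} in $y$ of $F_d(y+t,1)$, not the degree).
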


%- - - - - - - - - - - - - - - - - - - - - - - - - - - - - - - - - - - - - - - - - - - -

\begin{proof}
Write $F$ as $F(y,z)=F_d(y,z)+H(y,z)$ with $H \in K[[y,z]]$ and $\textnormal{ord}(H) >d$. Furthermore represent $F_d$ as in the proof of Lemma \ref{lem5}. Since $t \neq 0$ one gets for $F^*$
\begin{eqnarray*}
F^*(y,z) &=& \sum_{i=0}^k c_i (yz+tz)^{m-i}z^{n+i}
+ H(yz+tz,z) \\ &=& \underbrace{z^d \cdot \sum_{j=0}^m c_j' y^j}_{=:A(y,z)}  +
\underbrace{H(yz+tz,z).}_{=:B(y,z)} \\
\end{eqnarray*}
It is obvious that $\textnormal{ord}(A) \geq d=m+n$ and
$\textnormal{ord}_z(B)
> d$.
Moreover the last lemma implies $\Ordery(A) \leq
\Height(F_d)+\Parity(d)=k+\Parity(d)$. Therefore there exists an integer $j \in \{0,1,\ldots,k+\Parity(d)\}$ such that
$c_j' \neq 0$. Let $l$ be the smallest. Then $A$ can be written as
$A(y,z)=z^d \sum_{j=l}^m c_j'y^j.$ It follows that
$$\Degreey(F^*)=l \leq k+\Parity(F)=\Height(F_d)+\Parity(F).$$
\end{proof}

%- - - - - - - - - - - - - - - - - - - - - - - - - - - - - - - - - - - - - - - - - - - - - -
       
\begin{Bem} \label{Bsp}
The inequality of the previous lemma is sharp! Take for example $p=2$
and $F(y,z)=y^5z+y^3z^3+y^3z^8$. Then we have $\Height(F_d)=2$ and
$F^*(y,z)=F(yz+1 \cdot z,z)$ with $\Degreey(F^*)=3$ (see
figure \ref{Lemma5}). 
\end{Bem}

%	 \input{./Bilder/Quings/Moh4}
	 
%% Moh

\begin{figure}[h]
\begin{center}
\begin{minipage}{6cm}
\beginpicture

 \setcoordinatesystem units <9pt,9pt> point at 0 0

 \setplotarea x from -1 to 13, y from -1 to 11
 \axis left shiftedto x=0 ticks numbered from 5 to 11 by 5
                                short unlabeled from 1 to 11 by 1 /
 \axis bottom shiftedto y=0 ticks numbered from 5 to 13 by 5
                                short unlabeled from 1 to 13 by 1 /

 \put {\circle*{5}} [Bl] at  1  5
 \put {\circle*{5}} [Bl] at  3  3
 \put {\circle*{5}} [Bl] at  8  3

 \plot 1 11  1 5  3 3  13 3 /

 \vshade 1 5 11 <,z,,> 3 3 11 <,z,,>  13 3 11 /

 \multiput {\multiput {\circle{5}} [Bl] at 0  0 *6 2 0 /} [Bl] at 0 0 *5 0 2 /

\put{$y$} at -0.8 11 \put {$z$} at 13 -0.8

 \endpicture
\hspace{2cm}
%---------------------------------------
\end{minipage}
\begin{minipage}{6cm}
\beginpicture

 \setcoordinatesystem units <9pt,9pt> point at 0 0

 \setplotarea x from -1 to 13, y from -1 to 11
 \axis left shiftedto x=0 ticks numbered from 5 to 11 by 5
                                short unlabeled from 1 to 11 by 1 /
 \axis bottom shiftedto y=0 ticks numbered from 5 to 13 by 5
                                short unlabeled from 1 to 13 by 1 /

 \put {\circle*{5}} [Bl] at  6  5
 \put {\circle*{5}} [Bl] at  6  3
 \put {\circle*{5}} [Bl] at  11  1
 \put {\circle*{5}} [Bl] at  11  2
  \put {\circle*{5}} [Bl] at  11  3
   \put {\circle*{5}} [Bl] at  11  0

 \plot 6 11  6 3  11 0  13 0 /

 \vshade 6 3 11 <,z,,> 11 0 11 <,z,,>  13 0 11 /

 \multiput {\multiput {\circle{5}} [Bl] at 0  0 *6 2 0 /} [Bl] at 0 0 *5 0 2 /

\put{$y$} at -0.8 11 \put {$z$} at 13 -0.8

 \endpicture
%\hspace{1cm}
%---------------------------------------
\end{minipage}
\\[2ex]
 \begin{minipage}{11cm}
 \begin{abbildung} \label{Lemma5} \begin{center}
 $F(y,z)=y^5z+y^3z^3+y^3z^8$ with $\Height(F)=2$ and
 $F^*(y,z)=F(yz+1 \cdot z,z)$ with $\Degreey(F^*)=3$. \end{center} \end{abbildung}
 \end{minipage}
 \end{center}
\end{figure}

%-------------------------------------

\begin{Prop} \label{translationalmove}
Let $f$ be an element of $R/R^p$. Suppose there don't exist subordinate coordinates realizing $\Height(f)$ such that the blowup $\widehat R_a \rightarrow \widehat R'_{a'}$ is monomial. Then 
$$\Revheight_{a'}(f^*) < \Revheight_{a}(f).$$
\end{Prop}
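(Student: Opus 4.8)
\noindent The plan is to bound $\Revheight_{a'}(f^*)$ from above by combining Moh's estimate (Lemma~\ref{lem6}), the elementary inequality of Lemma~\ref{degreey}, and the rigidity of $\Ordery$ under a forced translational move recorded in Remark~\ref{Ordery}, and then to distinguish the two cases ``$f$ distant'' and ``$f$ close''. By Remark~\ref{strict} we may argue with the total transform. Fix subordinate coordinates $(y,z)\in\Ce_{\Fe}$ realizing $\height_a(f)$, so that $\Height(F)=\height_a(f)$ and $\Ordery(F)=\max_{\Ce_{\Fe}}\Ordery=:r$; the point $a'$ corresponds to the substitution $(y,z)\mapsto(yz+tz,z)$ with $t\in K^*$, the induced coordinates at $a'$ are subordinate to $\Fe'$, and $F^*(y,z)=F(yz+tz,z)$ is the expansion of $f^*$ in them. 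Since the translational move is assumed forced, Remark~\ref{Ordery} gives $\Ordery(F^*)<r$; in particular $r\geq1$, hence $f$ is \emph{not} adjacent (so, a fortiori, neither a quasi-monomial nor monomial), and $\Bonus_a(f)\in\{0,\varepsilon\}$.

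The first main step is a uniform upper bound. As $F_d$ is the homogeneous part of $F$ of degree $d=\textnormal{ord}(F)$ one has $\textnormal{supp}(F_d)\subseteq\textnormal{supp}(F)$, hence $\Ordery(F_d)\geq\Ordery(F)$ and $\Degreey(F_d)\leq\Degreey(F)$, so $\Height(F_d)\leq\Height(F)=\height_a(f)$; feeding this into Lemma~\ref{lem6} gives $\Degreey(F^*)\leq\height_a(f)+\Parity(F)$. Because the induced coordinates at $a'$ are subordinate, $\Revheight_{a'}(f^*)\leq\Height(F^*)-\Bonus(F^*)$, and Lemma~\ref{degreey} applied to $F^*$ gives $\Height(F^*)\leq\Degreey(F^*)-2+\Adj(F^*)$. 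Combining,
$$\Revheight_{a'}(f^*)\;\leq\;\height_a(f)+\Parity(F)-2+\bigl(\Adj(F^*)-\Bonus(F^*)\bigr).$$
The bracket on the right takes the values $0$, $1-\varepsilon$, $1-\delta$ according as $F^*$ is distant, close, or adjacent in the induced chart; since $0<\varepsilon<\delta<1$ these are in increasing order, so the bracket is always $\leq1-\varepsilon$, and it equals $1-\delta$ as soon as $F^*$ is adjacent there.

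The proof then finishes by the case split, using $\Parity(F)\leq1$ throughout. If $f$ is distant ($r\geq2$, so $\Bonus_a(f)=0$ and $\Revheight_a(f)=\height_a(f)$), the crude bound $\Adj(F^*)-\Bonus(F^*)\leq1-\varepsilon$ already yields $\Revheight_{a'}(f^*)\leq\height_a(f)-\varepsilon<\height_a(f)=\Revheight_a(f)$. If $f$ is close ($r=1$, so $\Bonus_a(f)=\varepsilon$ and $\Revheight_a(f)=\height_a(f)-\varepsilon$), then $\Ordery(F^*)<r=1$ forces $\Ordery(F^*)=0$, i.e.\ $F^*$ is \emph{adjacent} in the induced chart, so the bracket equals $1-\delta$ and $\Revheight_{a'}(f^*)\leq\height_a(f)-\delta<\height_a(f)-\varepsilon=\Revheight_a(f)$, the last inequality being exactly where $\varepsilon<\delta$ enters.

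The step I expect to be the real obstacle --- as opposed to mere bookkeeping --- is the close case together with $\Parity(F)=1$: here the height of $f^*$ may genuinely exceed that of $f$ by one, so the strict decrease has to come entirely from the bonus. The mechanism that makes it work is that a forced translational move starting from a \emph{close} polygon cannot keep $\Ordery$ equal to $1$ (this is the content of Remark~\ref{Ordery}), and must therefore lower it to $0$; thus $f^*$ becomes adjacent in the induced chart and its bonus jumps from $\varepsilon$ to $1+\delta$, which is precisely the reason the constants are chosen with $\varepsilon<\delta$. Verifying cleanly that ``$\Ordery$ cannot stay at its old value under a forced translational move'', together with the sharpness of Moh's bound underlying Lemma~\ref{lem6}, is where the genuine content of this paragraph lies.
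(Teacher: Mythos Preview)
Your argument is correct and follows essentially the same approach as the paper: combine Lemma~\ref{degreey}, Lemma~\ref{lem6} (together with $\Height(F_d)\leq\Height(F)$), and Remark~\ref{Ordery} to obtain the bound $\Revheight_{a'}(f^*)\leq\height_a(f)+\Parity(F)-2+\Adj(F^*)-\Bonus(F^*)$, then run through the admissible adjacency transitions. The paper lists the four transitions and checks the close~$\to$~adjacent case explicitly, whereas you group them into ``$f$ distant'' (handled by the uniform bound $\Adj(F^*)-\Bonus(F^*)\leq 1-\varepsilon$) and ``$f$ close'' (where $\Ordery(F^*)=0$ forces the adjacent value $1-\delta$); this is a cosmetic difference only.
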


\begin{proof}
Due to remark \ref{Ordery} it is sufficient to show the result for the following situations:

\begin{center}
\begin{tabular}{ccc}
 $F$ & & $F^*$  \\
  \hline
 distant & $\rightarrow$ & distant \\
 distant & $\rightarrow$ & close \\
 distant & $\rightarrow$ & adjacent \\
 close & $\rightarrow$ & adjacent
\end{tabular}
\end{center}
Combining Lemmata 
\ref{degreey} and \ref{lem6} gives
\begin{eqnarray*}
\Revheight(f^*) &\leq& \Height(F^*)-\Bonus(F^*)\\ & \leq & \left(\textnormal{deg}_y(F^*)-2+\Adj(F^*)\right)-\Bonus(F^*) \\ &\leq & \left(\Height(F_d) + \Parity(d)\right)-2+\Adj(F^*)-\Bonus(F^*) \\ 
&\leq& \Height(F) - \left(2-\Adj(F^*)+\Bonus(F^*)-\Parity(d)\right)=:(\triangle).
\end{eqnarray*}
Since by assumption $\varepsilon < \delta$, one can deduce 
that in the four situations described above 
$$(\triangle) \leq \Height(F)-\Bonus(F)$$ holds.
Consider for instance the situation where $F$ is close and $F^*$ is adjacent. In this case $(\triangle)=\Height(F)-(1+\delta-\Parity(F))<\Height(F)-\varepsilon=\Revheight(f)$ as required. \smallskip 
\end{proof}

%-------------------------------------------------------------------------------------------------

%----------------------------------------------------------------------------------------------

%-------------------------------------------------------------------------------------------------

\subsection*{(H) Horizontal moves} \label{horizontal-move}
$ $ 
\medskip

\noindent The goal of this section is to prove that the \revheight \ does not increase for the horizontal transform $F^*(y,z)=F(yz,z)$. We assume that $(y,z) \in \Ce_{\Fe}$ are chosen in a way such that $\Height(F)=\Height(f)$ and such that the total transform $f^*$ of $f$ under the blowup $\pi$ has expansion $F^*(y,z)=F(yz,z)$ in $\widehat R'_{a'} \cong K[[y,z]]$.  It is obvious that $$\height(F^*) \leq \height(F)$$ (with $\height(F^*)<\height(F)$ if the Newton polygon $N(F)$ of $F$ contains an edge 
whose angle with the horizontal line is bigger than or equal to $45°$).
And, clearly, by moving horizontally the adjacency and hence the \Bonus \ remain the same.
This immediately implies that
\begin{eqnarray*} \Revheight(f^*)&=&\Height(f^*)-\Bonus(f^*) \leq \Height(F^*)-\Bonus(F^*) \\ &\leq& \height(F)-\Bonus(F)=\Revheight(f)
\end{eqnarray*} is fulfilled for all $f \in Q$.  \smallskip

%------------------------------------------------------------------------------------------------
%------------------------------------------------------------------------------------------------

\subsection*{(V) Vertical moves} \label{vertical-move}
$ $ 
\medskip

\noindent In this section it will be shown that under vertical moves elements $f \in Q=R/R^p$ which are not quasi-monomials satisfy $\Revheight(f')<\Revheight(f)$.   Assume that the subordinate coordinates $(y,z) \in \Ce_{\Fe}$ are chosen so that $\Height(F)=\Height(f)$. The total transform $f^*$ of $f$ is given as the equivalence class of the transform $F^*(y,z)=F(y,yz)$ of $F$. As the intricacy is a minimum it suffices to show that
$$\Revheight(F^*) < \Revheight(F).$$ 
\ind Since $F$ is not a monomial we know that $\height(F)>0$, from which a one-line computation   yields 
$$\Height(F^*) \leq \Height(F)-1.$$
\ind By the definition of the bonus it follows that $\Revheight(F^*)< \Revheight(F)$ except possibly if $F$ is adjacent and $\Height(F^*) = \Height(F)-1$. This equality only occurs if $\Width(F)=\deg_z(F)-\Order_z(F)$ equals $1$, i.e., if $F$ is a quasi-monomial. 

\begin{Bem} In the case of width $1$, we may assume, by prior line blowups with center the $z$-axis, that $\Order_z(F)<p$. This combined with $\Width(F)=1$ and $F$ adjacent  implies that $F$ has a pure $y$-monomial $y^m$ with $m\leq p$ (cf. figure 8).  But $m=p$ is not possible because $F$ has its exponents in $\N^2\setminus p\cdot \N^2$, and $m<p$ implies that the order of $f$ (and hence $G$) has dropped below $p$. So quasi-monomials are handled by applying suitable line blowups. We conclude that under vertical moves either the order of $G$ drops or $\Revheight(f')<\Revheight(f)$.
\end{Bem}

%	\input{./Bilder/Quings/Quasimonomial2} 
	
%% Quasi-Monom

 \begin{figure}[htbp]
 \begin{center}
 \begin{minipage}{6cm}
 \beginpicture

 \setcoordinatesystem units <7pt,7pt> point at 0 0

 \setplotarea x from -1 to 14, y from -1 to 14
 \axis left shiftedto x=0 ticks numbered from 5 to 14 by 5
                                short unlabeled from 1 to 14 by 1 /
 \axis bottom shiftedto y=0 ticks numbered from 5 to 14 by 5
                                short unlabeled from 1 to 14 by 1 /

 \put {\circle*{5}} [Bl] at  3  3
 \put {\circle*{5}} [Bl] at  4  0

 \put {\circle*{5}} [Bl] at  6  0

 \plot 3 14    3 3  4 0  14 0 /   %Streckenzug

 \vshade  3 3 14 <,z,,> 4  0 14  <,z,,>  14 0 14 /

 \put {\large $N(F)$} at  8 8

 \multiput {\multiput {\circle{5}} [Bl] at 0  0 *2 5 0 /} [Bl] at 0 0 *2 0 5 /
 \put {$z$} at 14 -0.8
 \put {$y$} at -0.8 14
 \endpicture
 \hspace{2cm}
 %-----------------------------------------
\end{minipage}
 \begin{minipage}{6cm}

 \beginpicture

 \setcoordinatesystem units <7pt,7pt> point at 0 0

 \setplotarea x from -1 to 14, y from -1 to 14
 \axis left shiftedto x=0 ticks numbered from 5 to 14 by 5
                                short unlabeled from 1 to 14 by 1 /
 \axis bottom shiftedto y=0 ticks numbered from 5 to 14 by 5
                                short unlabeled from 1 to 14 by 1 /

 \put {\circle*{5}} [Bl] at  3  6
 \put {\circle*{5}} [Bl] at  4  4

 \put {\circle*{5}} [Bl] at  6  6

 \plot 3 14    3 6  4 4  14 4 /   %Streckenzug

 \vshade  3 6 14 <,z,,> 4  4 14  <,z,,>  14 4 14 /

 \put {\large $N(F^*)$} at  8 8

 \multiput {\multiput {\circle{5}} [Bl] at 0  0 *2 5 0 /} [Bl] at 0 0 *2 0 5 /
 \put {$z$} at 14 -0.8
 \put {$y$} at -0.8 14
 \endpicture
  %-----------------------------------------
 \end{minipage}
 \\[2ex]
 \begin{minipage}{12cm}
 \begin{abbildung}\label{ausnahme} \begin{center}
 Configuration where the \revheight  \ increases under blowup. \end{center} \end{abbildung}
 \end{minipage}
 \end{center}
 \end{figure}

%-----------------------------------------------------------------------------------------
%%%%%%%%%%%%%%%%%%%%%%%%%%%%%%%%%%%%%%%%%%%%%%%%%%%%%%%%%%%%%%%%%%%%%%%%%%%%%%%%%%%%%%%%%%%%%%%%%%%%%%%%
%%%%%%%%%%%%%%%%%%%%%%%%%%%%%%%%%%%%%%%%%%%%%%%%%%%%%%%%%%%%%%%%%%%%%%%%%%%%%%%%%%%%%%%%%%%%%%%%%%%%%%%%

\subsection{Decrease of the invariant}  \label{maximieren}  
$ $ \\ 
%\\

\noindent In order to prove Theorem \ref{Theo1} it remains, due to Proposition \ref{translationalmove} of paragraph (T)  in section \ref{nonincrease},
 to show that all $f \in Q=R/R^p$ that are not quasi-monomials (which can be resolved directly, see 
paragraph (V) of section \ref{vertical-move})
fulfill
$$(\Revheight(f^*),\Slope(f^*)) <_{lex} (\Revheight(f),\Slope(f)), \ \ \ \ \ (1)$$
where $f^*$ is given as the equivalence class of one of the transforms $F^*(y,z)=F(yz,z)$ or $F^*(y,z)=F(y,yz)$ of  a representative $F(y,z)$ of $f$ with $\Height(F)=\Height(f)$. 

\ind For the purpose of proving $(1)$, fix throughout this section subordinate coordinates $(y,z)$ at $a$ realizing the \height \ of $f$ such that $a' \in E$ is one of the origins of the two charts of the blowup $\pi$. Then the total transform $f^*$ of $f$ under $\pi$ is one of the transforms $F^*(y,z)=F(yz,z)$ or $F^*(y,z)=F(y,yz)$ of $F(y,z)$. 

\ind Due to Proposition \ref{intricacy} of section \ref{nonincrease}, all
elements $f \in Q$ which are not quasi-monomials satisfy  $\Revheight(f^*) \leq  \Revheight(f)$. Hence one is left with the
case that $$\Revheight(f^*)=\Revheight(f). \ \ \ \ \ (2)$$ 
Since the situation $(2)$ doesn't occur when applying translational or vertical moves, it suffices to consider the
horizontal move $F^*(y,z)=F(yz,z)$.
It is obvious that then
 $(2)$ can only happen if the Newton polygon
$N(F)$ of $F$ consists just of edges
whose angle with the horizontal line is smaller than $45°$.
But in this case the vertices of $N(F)$ with the highest
respectively second highest first component transform into
vertices of the Newton polygon $N(F^*)$ of $F^*$ with the same
property. Moreover, then 
$$\Slope(F^*)=\Slope(F)-\alpha_1 < \Slope(F),$$ where $(\alpha_1,\beta_1)$ denotes the vertex of $N(F)$ whose first component has 
the highest value among all vertices of $A$. 
Now assume that $\Slope(f^*)>\Slope(F^*)$. Then
there exists a coordinate change $\varphi'$ which is subordinate to the flag $\Ge$ at $a'$
such that $$\Height(\varphi'(F^*))=\Height(F^*) \textnormal{ and } \Slope(\varphi'(F^*))>\Slope(F^*).$$
One can assume that $\varphi'$ is of the form 
$$\varphi': (y,z) \rightarrow \left(y+A(z),z\right)$$ with $A \in K[[z]]$, $\textnormal{ord}(A) \geq 1$ (if would $A$ depend also on $y$, the respective terms have no effect on the slope).
Let $\varphi$ be the coordinate change subordinate to the flag $\Fe$ at $a$ given by
$$\varphi: (y,z) \rightarrow \left(y + z \cdot A(z),z \right).$$
Then the computation
\begin{eqnarray*}
\varphi' \left(F^*(y,z)\right) &=& \varphi' \left(F(yz,z)\right) \\
&=& F\left((y+A(z))z,z\right) \\
&=& \left(F\left(y+zA(z),z\right)\right)^* \\
&=& \left(\varphi \left(F(y,z)\right)\right)^* 
\end{eqnarray*}
shows that the necessary coordinate change $\varphi'$ at $a'$ stems from the coordinate change $\varphi$ at $a$ and that when applying $\varphi$ and $\varphi'$ the blowup remains monomial. 
In other words, if one realizes $\Slope(f^*)$ after applying the blowup by $\Slope(\varphi'(F^*))$, then $\Slope(\varphi(F))$ automatically realizes $\Slope(f)$. And consequently $\Slope(f^*)<\Slope(f)$. This proves Theorem \ref{Theo1}.\\

%-----------------------------------------------------------------------------------------------
%%%%%%%%%%%%%%%%%%%%%%%%%%%%%%%%%%%%%%%%%%%%%%%%%%%%%%%%%%%%%%%%%%%%%%%%%%%%%%%%%%%%%%%%%%%%%%%%%%%%%%

\section{Proof of Corollary \ref{Theo2}} \label{termination}
$ $ \\
\noindent Recall that our strategy for improving the singularities of a purely inseparable two-dimen\-sional hypersurface $$G(x,y,z)=x^p+F(y,z)$$ of order equal to the characteristic  is the following:  As long as the equivalence class $f$ of $F$ in $Q=R/R^p$ is not a monomial in a certain point $b=(b_1,a)$  in $\A^3_K$ with $\textnormal{ord}_b(G)=p$, we blow up $\A^3_K$ with center $Z=\{b\}$. Due to Theorem \ref{Theo1} this point blowup  improves the situation (except in the case that $f$ is a quasi-monomial, which can be resolved directly, cf. section \ref{vertical-move}) in the sense that $i_{a'}(f')<i_a(f)$ for all points $a' \in E=\pi^{-1}(Z)$ above $a$, where $i_a(f)=(\Height_a(f)-\Bonus_a(f),\Slope_a(f))$ denotes the local resolution invariant defined in section \ref{Kap1}.  One can hence deduce by induction that point blowups eventually lead to $\Height_a(f)=0$, i.e., that $f$ is monomial.
This is a combinatorial situation: in section \ref{Kap5} it is shown that in this case  the order of the surface can be decreased by finitely many further point and curve blowups.  

\ind To ensure that {\it finitely} many point blowups suffice to transform $f$ in {\it every} point $a \in V(G)$ into a monomial, it will be shown in this section that there are only finitely many closed points $b=(b_1,a)$ on $V(G)$ where $f$ is not monomial in $a$ (and $\textnormal{ord}_b(G)=p$). This establishes the termination of the algorithm described above. 

The result will be proven in two steps: First it is shown -- already for arbitrary dimensional 
purely inseparable hypersurfaces $X$ with order equal to $p$ -- 
that the subset of $X$ containing those points $b$ where the coefficient ideal is not monomial (and the order of $X$ in $b$ is equal to $p$) is {\it Zariski-closed}. Afterwards this result will be used to prove that in the surface case there are only {\it finitely many} such points. \\

\begin{Prop} \label{lemma_1}
Let $G(x,y_1,\ldots,y_n)=x^p+F(y_1,\ldots,y_n)$ with $F \in K[y_1,\ldots,y_n]$ and where $F$ is not a $p$-th power. Denote by $y$ the $n$-tuple of variables $(y_1,\ldots,y_n)$.
Then the set of closed points $b=(0,a_1,\ldots,a_n) \in  \A_K^{1+n}$ such that there exist a local formal coordinate change $\psi$ at $b$ of the form $\psi: (x,y_1,\ldots,y_n) \rightarrow (x-H(y),\alpha_1(y),\ldots,\alpha_n(y))$, where $H \in K[[y]]$ and $\varphi: (y_1,\ldots,y_n) \rightarrow (\alpha_1(y),\ldots,\alpha_n(y))$ is an element of $\textnormal{Aut}(K[[y]])$, and a unit $u \in K[[y]]^*$ with the property that $$G(\psi(x,y)+b)=x^p+u(y) \cdot y^\beta$$ for some vector $\beta \in \N^n \setminus p \cdot \N^n$, is Zariski-open in $\{0\} \times\A_K^n$. 
\end{Prop}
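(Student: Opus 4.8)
The plan is to prove instead that the complementary ``non-monomial locus'' $\mathcal Z\subseteq\{0\}\times\A^n$ is Zariski-closed. I begin by reformulating the condition. Decomposing $\psi$ into its $x$- and $y$-components and using the identity $x^p-H(y)^p=(x-H(y))^p$ valid in characteristic $p$, one sees that $b=(0,a)$ lies in the set in question if and only if $F(\varphi(y)+a)=H(y)^p+u(y)\,y^\beta$ for some $\varphi\in\textnormal{Aut}(K[[y]])$, some $H\in K[[y]]$, a unit $u$ and some $\beta\in\N^n\setminus p\N^n$ -- equivalently, if and only if the class $f$ of $F$ modulo $p$-th powers is monomial at $a$ in the sense of section \ref{Kap1}. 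Here the hypothesis that $F$ is not a $p$-th power is used: it guarantees that $F(y+a)\notin K[[y]]^p$ for every $a$ (otherwise all $\partial_iF$ would vanish identically), so that $f$ never becomes zero, which makes the constraint $\beta\notin p\N^n$ automatic (the canonical expansion of a nonzero element of $R/R^p$ contains no $p$-th-power monomial), and it guarantees that $V(\partial_1F,\dots,\partial_nF)$ is a \emph{proper} closed subset of $\A^n$. Since $f$ is monomial at $a$ as soon as the order of its canonical expansion at $a$ is $\leq 1$ (the constant term is dropped, so that order can never be $0$, and an element of order $1$ is a regular parameter, hence monomial), and since that order is $\geq 2$ exactly when the linear part $\sum_i\partial_iF(a)\,y_i$ vanishes, we obtain $\mathcal Z\subseteq V(\partial_1F,\dots,\partial_nF)$. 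It remains to show $\mathcal Z$ is closed inside this proper closed set.

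The central -- and, I expect, hardest -- step is a finiteness statement: monomiality of $f$ at $a$ is governed by a jet of $F$ at $a$ of bounded order. Concretely, I would prove that there is an integer $N$, depending only on $D:=\deg F$, on $n$ and on $p$, such that $f$ is monomial at $a$ if and only if the identity above can be realised modulo $(y)^{N+1}$ by a \emph{polynomial} coordinate change $\varphi$ of degree $\leq N$, by truncations of $u$ and $H$ of order $\leq N$, and by an exponent $\beta$ with $|\beta|\leq D$ (indeed $|\beta|$ equals the order of the canonical expansion of $f$ at $a$). The real content is this uniform bound on the complexity of a monomialising coordinate change: although a monomial is not finitely determined in more than one variable, a \emph{polynomial} of bounded degree has bounded monomialisation complexity, uniformly in $a$ -- morally, simultaneously straightening the finitely many smooth branches of $V(F)$ through $a$ into coordinate hyperplanes can be done by a coordinate change whose degree is controlled by $D$ alone. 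Granting this, write $\phi\colon\A^n\to\A^M$ for the morphism sending $a$ to the coefficients of the $N$-jet of $F$ at $a$ (these are polynomials in $a$), and let $\mathcal M\subseteq\A^M$ be the locus of ``monomialisable'' $N$-jets. By the previous point $\mathcal M$ is a finite union -- over the finitely many admissible $\beta$ -- of images under a morphism of the \emph{finite-dimensional} affine spaces parametrising triples $(\varphi,u,H)$ of bounded degree; hence $\mathcal M$ is constructible by Chevalley's theorem, and therefore so are $S=\phi^{-1}(\mathcal M)$ and its complement $\mathcal Z$.

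It remains to upgrade ``constructible'' to ``closed''. A constructible subset of $\A^n$ is closed as soon as it is stable under specialisation, so it suffices to show that $S$ is open. Given $a_0\in S$ with explicit witnesses $(\varphi_0,u_0,H_0,\beta)$, the polynomial system from the previous step -- recording the vanishing of the coefficients of $F(\varphi(y)+a)$ off the orthant $\beta+\N^n$ and the non-vanishing of the corner coefficient -- can be solved for $(\varphi,u,H)$ as algebraic functions of $a$ in a Zariski-neighbourhood of $a_0$ by Hensel's lemma, the relevant Jacobian being nonsingular at $a_0$ after the usual normalisations (invertibility of the linear part of $\varphi_0$ and $u_0(0)\neq 0$). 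Hence a whole Zariski-neighbourhood of $a_0$ lies in $S$. One must keep in mind here the failure of upper semicontinuity of the shade at non-closed points -- the ``generic going up'' discussed in section \ref{aim} -- but it does not interfere with the argument, because throughout it the exponent $\beta$ is held fixed and one works with the concrete equation rather than with a maximum over all coordinate systems. Combining the three steps yields that $\mathcal Z$ is closed, proving the proposition.
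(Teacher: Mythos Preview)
Your reformulation and the inclusion $\mathcal Z\subseteq V(\partial_1F,\dots,\partial_nF)$ are correct, but the two steps you yourself identify as the real content are not carried out, and each hides a genuine difficulty.

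\textbf{The finiteness bound.} You assert the existence of $N=N(D,n,p)$ such that monomialisability of $f$ at $a$ is witnessed by degree-$\leq N$ data, but you offer only a heuristic (``straightening the finitely many smooth branches''). This is the heart of your argument and it is not proven. The failure of finite determinacy for monomials in several variables, which you acknowledge, is precisely the obstruction one must overcome; it is not explained how the degree bound on $F$ alone produces a uniform bound on the monomialising coordinate change, especially once one works modulo $p$-th powers.

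\textbf{The Hensel step.} You claim the Jacobian of the truncated system is nonsingular ``after the usual normalisations'', but this is not checked. In characteristic $p$ the map $H\mapsto H^p$ has identically zero differential, so $H$ contributes nothing to the linearisation; the na\"{\i}ve implicit function theorem therefore does not apply to $(\varphi,u,H)$ jointly. One can try to eliminate $H$ by passing to the canonical representative modulo $K[[y]]^p$, but the remaining system in $(\varphi,u)$ still needs its Jacobian analysed, and no reason is given for smoothness. Note too that if this step really produced a Zariski neighbourhood of each $a_0\in S$ inside $S$, the constructibility detour would be superfluous.

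The paper takes a different route that avoids both problems. It first absorbs the unit $u$ into the coordinate change (a unit has an $r$-th root whenever $(r,p)=1$), reducing to $F(\varphi(y)+a)=y^\beta+H(y)^p$. It then applies Artin's Approximation Theorem: a formal solution $(\varphi,H)$ at $a$ forces an \emph{algebraic} one, agreeing modulo $\langle y\rangle^2$ so that $\varphi$ remains an automorphism. Artin approximation needs no smoothness hypothesis, so the vanishing differential of $H\mapsto H^p$ is harmless. The algebraic solution is then regular on an \'etale neighbourhood $\theta_a\colon(V,v)\to(\A^n,a)$; the paper invokes a separate result that the ordinary monomial locus of a fixed series is Zariski-open in $V$, and pushes this down via openness of \'etale morphisms. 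Your Hensel idea points in the same direction, but Artin is the correct tool here precisely because the equations are not smooth.
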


\begin{proof}  
\noindent The assertion of the proposition is clearly equivalent to the statement that the following set is Zariski-open in $\A_K^n$: 
\begin{center} $\textnormal{mon}(F):=\{a \in \A_K^n; \textnormal{there exist} \ \varphi \in \textnormal{Aut}(K[[y]]),\ H \in K[[y]], \ u  \in K[[y]]^* $ \\ $\textnormal{ such that for some }$ $\beta \in \N^n \setminus p \cdot \N^n$: \\ $F(\varphi(y)+a)=u(y) \cdot y^\beta + H(y)^p\}.$  
\end{center}  
Note that if a series $A \in K[[y]]$ factors into a monomial times a unit $U \in K[[y]]^*$, i.e., $$A(y)=U(y) \cdot y^\gamma,$$ where at least one of the components of $\gamma$ is not a multiple of the characteristic $p$ of the ground field $K$, then there exists a coordinate change $\tau \in \textnormal{Aut}(K[[y]])$ such that $$A(\tau(y))=y^\gamma.$$ This is due to the fact that a unit $U \in K[[y]]^*$ has a $r$-th root $U^{1/r}$ in $K[[y]]^*$ if $(r,p)=1$ (and can for example be deduced from Lemma 4.2 in \cite{Brown}).
Since the image of a $p$-th power under an automorphism $\tau \in \textnormal{Aut}(K[[y]])$ is again a $p$-th power, the set $\textnormal{mon}(F)$ can be rewritten as
\begin{center} $\textnormal{mon}(F)=\{a \in \A_K^n; \textnormal{there exist}\ \varphi \in \textnormal{Aut}(K[[y]]), \ H \in K[[y]]  $ \\ $\textnormal{ such that for some }$ $\beta \in \N^n \setminus p \cdot \N^n$: \\ $F(\varphi(y)+a)=y^\beta + H(y)^p\}.$  
\end{center}  
We will prove that this set is Zariski-open in $\A_K^n$ by following a construction which will be explained in detail in the forthcoming article  \cite{BW}: 
Consider for a fixed point $a \in \A^n$ the equation 
$$F(\varphi(y)+a)= y^\beta + H(y)^p. \hspace{10pt} (\star)$$
By Artin's Approximation Theorem \cite{MR0268188} it follows that if for some vector $\beta \in \N^n \setminus p \cdot \N^n$ there exist solutions $\overline{\varphi}(y)=(\overline{\alpha_1}(y),\ldots,\overline{\alpha_n}(y))$ and $\overline{H}(y)$ of $(\star)$
in the ring $K[[y]]$ of formal power series, then there already exist  solutions ${\varphi}(y)=(\alpha_1(y),\ldots,\alpha_n(y))$ and ${H}(y)$  of $(\star)$ in the henselisation of $K[y]$, i.e., in the ring $K\langle \langle y \rangle \rangle$ of algebraic power series in $n$ variables, such that both solutions agree modulo $\langle y \rangle^c$ for a chosen constant $c \in \N$. Note that if one chooses $c=2$, then the property for $\overline{\varphi}$ to be an automorphism is also ensured for $\varphi$. 
Since $H$ and the components $\alpha_i$ of $\varphi$ are elements of $K\langle \langle y \rangle \rangle$, they are regular functions on an \'etale neighborhood $\theta_a: (V,v) \rightarrow (\A_K^n,a)$ of $a=\theta_a(v)$. Now consider the monomial locus $\textnormal{mon}(Q,a)$ of $$Q(y):=F(\varphi(y)+a)-H(y)^p$$ 
 in $V$, i.e., the set of points $v' \in V$ such that there exist local coordinates $w=(w_1,\ldots,w_n)$ at $v'$ with $Q(w+v')=w^\gamma$ in $\widehat{\mathcal{O}}_{V,v'} = K[[w]]$ for some $\gamma \in \N^n$. 
In \cite{BW} it is proven that $\textnormal{mon}(Q,a)$ is a Zariski-open subset of $V$. Due to $\widehat{\mathcal{O}}_{V,v'} = \widehat{\mathcal{O}}_{\A_K^n,\theta_a(v')}$, $v' \in \textnormal{mon}(Q,a)$ implies that $F(w+\theta_a(v'))=w^\gamma+H(w+\theta_a(v'))^p$.
Note that at first sight it seems to be possible that $\gamma \in p \cdot \N^n$, and in this case $\theta_a(v')$ wouldn't be an element of $\textnormal{mon}(F)$. But if all components of $\gamma$ are multiples of $p$ then $F(w+\theta_a(v'))=w^\gamma+H(w+\theta_a(v'))^p$ would be a $p$-th power, which contradicts our assumption (since $F(w) \in K[w]$ is a $p$-th power if and only if $F(\phi(w)+c)$ is for all $\phi \in \textnormal{Aut}(K[[w]])$ and all $c \in \A_K^n$). Consequently $\theta_a(v')$ is contained in $\textnormal{mon}(F)$.
By the openness of \'etale morphisms it follows that $\theta_a(\textnormal{mon}(Q,a))$ is an open subset of $\textnormal{mon}(F)$.  

\ind This procedure can be carried out for all
points $a \ \in \ \textnormal{mon}(F)$. Then the set
$$\bigcup_{a \in \textnormal{mon}(F)}  \theta_a(\textnormal{mon}(Q,a))$$ clearly  equals $\textnormal{mon}(F)$ and is as a union of Zariski-open sets itself Zariski-open.  \\
\end{proof}

%-----------------------------------------------------------------------------------------------------

\begin{Prop} \label{lemma_2}
Let $f$ be an element of $R$ which is not a $p$-th power. Then the closed points $a \in V(f) \subset \A^2_K$ in which $f$ has order $\textnormal{ord}_a(f) \geq p$ and in which $f$ is, when considered as an element of $R/R^p$, not monomial, are isolated (in particular, finite in number).
\end{Prop}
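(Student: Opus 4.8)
\medskip
\noindent\textbf{Proof proposal.} The plan is to combine the openness statement of Proposition~\ref{lemma_1} with a local analysis of $f$ at the generic point of a hypothetical ``bad'' curve. First I would isolate the relevant closed sets. By Proposition~\ref{lemma_1} in the case $n=2$, the monomial locus $\mathrm{mon}(F)\subseteq\A^2_K$ is Zariski-open; let $N$ be its complement, the non-monomial locus. At a point where $f$ does not vanish $F$ is a unit, hence trivially a monomial times a unit, so $N\subseteq V(f)$. Moreover $P:=\{a\in\A^2_K:\ \mathrm{ord}_a(f)\geq p\}$ is Zariski-closed, being cut out by the vanishing of the finitely many Taylor coefficients of $F$ of order $<p$. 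Hence $B:=N\cap P$ is a Zariski-closed subset of the curve $V(f)$, so it is either finite --- which is the assertion --- or it contains an irreducible curve $C$. Assume the latter and derive a contradiction.

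Such a $C$ is an irreducible component of $V(f)_{\mathrm{red}}$; write $g$ for its (irreducible) equation and $m\geq 1$ for the multiplicity of $g$ in $f$, so $f=g^m h$ with $g\nmid h$. For a closed point $a\in C$ lying outside the finite set $\mathrm{Sing}(C)\cup(C\cap V(h))$ the curve $C$ is smooth at $a$, $h$ is a unit in $\widehat R_a$, and $\mathrm{ord}_a(f)=m$; since $a\in P$ this forces $m\geq p$. Choosing local coordinates $(y,z)$ at $a$ with $z=g$, one gets $f=z^m u$ in $\widehat R_a\cong K[[y,z]]$ with $u:=h\in K[[y,z]]^{*}$ a unit. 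It then suffices to show that $f$ is monomial at all but finitely many such $a$, contradicting $C\subseteq N$. If $p\nmid m$ this is immediate: as $(m,p)=1$, the unit $u$ admits an $m$-th root $v\in K[[y,z]]^{*}$ (root extraction of units coprime to $p$, as recalled in the proof of Proposition~\ref{lemma_1}), so with the regular parameter $w:=zv$ one has $f=w^m$ in the coordinates $(y,w)$, a monomial.

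The only delicate case is $p\mid m$. Here the canonical representative $F$ of $f$ at $a$ is obtained from $z^m u$ by deleting the monomials lying in $p\cdot\N^2$; writing $u=U^p+\widetilde u$, where $U^p$ collects the $p$-th power monomials of $u$ (so $U\in K[[y,z]]^{*}$ by Frobenius) and $\widetilde u$ is supported off $p\cdot\N^2$, one finds $F=z^m\widetilde u$. Now $\widetilde u\not\equiv 0$: otherwise $u=U^p$, so $f=(z^{m/p}U)^p$ would be a $p$-th power in $\widehat R_a$, hence --- $f$ being a polynomial and $K[y,z]$ normal --- a $p$-th power in $R$, contrary to hypothesis. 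Also $\widetilde u(a)=0$, since the constant term of $u$ already sits in $U^p$; thus $k:=\mathrm{ord}_a(\widetilde u)$ satisfies $1\leq k<\infty$. If $k=1$, then $\widetilde u$ is a regular parameter, and completing it to a system of local coordinates one checks that $z^m\widetilde u$ is a monomial times a unit. The locus where $k\geq 2$ is the locus where $\nabla\widetilde u(a)=0$, i.e. (since $\nabla(U^p)=0$ in characteristic $p$) where $\nabla u(a)=0$, which is $C\cap\mathrm{Crit}(h)$; if this is a proper subset of $C$, we are done. If $C\subseteq\mathrm{Crit}(h)$, i.e. $g$ divides both partial derivatives of $h$, then $\nabla\widetilde u|_{z=0}=0$ forces the coefficients of all monomials $y^i$ and $y^iz$ in $\widetilde u$ to vanish, so $z^2\mid\widetilde u$ and $F=z^{m+2}(\widetilde u/z^2)$; iterating this reduction (which terminates because $\widetilde u\not\equiv 0$) exhibits $f$ as monomial at all but finitely many $a\in C$, once again a contradiction. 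Hence $B$ is finite.

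The step I expect to be the main obstacle is precisely this last one: the case $p\mid m$ with $g$ dividing both partials of $h$. It requires careful bookkeeping of which monomials survive the passage to $R/R^p$ together with a genericity argument; the subtlety is that the coordinates $(y,z)$, and hence $\widetilde u$ and its order, depend on the chosen point $a$, so the argument has to be phrased so as to vary well in a family over $C$ --- for instance by working over the function field $K(C)$ and then spreading out, which is also the natural setting for invoking the construction of \cite{BW} used in Proposition~\ref{lemma_1}.
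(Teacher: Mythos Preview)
Your approach differs substantially from the paper's. The paper's proof is a two-line argument: once Proposition~\ref{lemma_1} gives that the non-monomial locus is closed and upper-semicontinuity gives that $\{a:\textnormal{ord}_a(f)\ge p\}$ is closed, the paper simply observes that every point of the intersection $(\triangle)$ satisfies $\textnormal{ord}_a(f)\ge p\ge 2$, so $(\triangle)\subseteq\textnormal{Sing}(V(f))$, and the singular locus of a plane curve is finite. Your route --- assuming a bad curve $C$ and producing a monomial presentation of $f$ at a generic point of $C$ --- is more hands-on; it is in effect what one would do to justify the paper's last sentence in the non-reduced case, but it is considerably longer.

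There is a genuine gap in your $k=1$ step. The assertion ``$\widetilde u$ is a regular parameter, and completing it to a system of local coordinates one checks that $z^m\widetilde u$ is a monomial times a unit'' is only immediate when $(\widetilde u,z)$ is itself a regular system, i.e.\ when $\partial_y\widetilde u(0)\neq 0$. If $\partial_y\widetilde u(0)=0$ while $\partial_z\widetilde u(0)\neq 0$ (still $k=1$), the only obvious completion is $(y,\widetilde u)$, and then $z$ must be re-expressed in the new coordinates; $z^m\widetilde u$ need not become a monomial times a unit. For instance with $p=3$, $m=3$, $\widetilde u=z+y^2$ one gets $F=z^3(z+y^2)=z^4+y^2z^3$, whose Newton polygon has two vertices in every obvious coordinate system. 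Your subsequent dichotomy ``$k=1$ good / $k\ge 2$ bad'' therefore misses this intermediate case. The strategy can be rescued --- the locus on $C$ where $\partial_y\widetilde u$ vanishes is either finite or, if it is all of $C$, forces $u(y,0)\in K[y^p]$ and hence $\widetilde u(y,0)=0$, giving $z\mid\widetilde u$ --- but this needs to be said, and it also cleans up the vague ``iterate'' step at the end. As written, the $p\mid m$ branch does not go through.
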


\begin{proof} 
\noindent Note that the set of closed points $a \in \A_K^2$ in which $f \in R/R^p \setminus \{0\}$ is monomial, is equal to the set $\textnormal{mon}(F)$ (with $n=2$) introduced in the proof of the last theorem, which was shown to be Zariski-open. Its complement in $\A_K^2$ -- which equals the set of points of $\A_K^2$ in which $f$ is not monomial -- is hence Zariski-closed. We are only interested in those points $a \in \A_K^2 \setminus F_{mon}$ in which the order of $f \in R$ is bigger or equal to $p$ (which clearly implies that $a \in V(f)$), thus in the points of the intersection 
$$(\triangle):=\left(\A_K^2 \setminus F_{mon}\right) \ \cap \ \{a \in \A_K^2; \textnormal{ord}_a(f) \geq p\}.$$
By the upper-semicontinuity of the order function it is clear that also the second of these two sets is a Zariski-closed subset of $\A_K^2$. Consequently, the points $a$ in $(\triangle)$ form an algebraic subset of $\A^2$. Moreover, the set $(\triangle)$ is a subset of the singular locus $\textnormal{Sing}(X)$ of $X=V(f) \subset \A^2$. And since any algebraic curve has only finitely many singular points, the set $(\triangle)$ consists of at most finitely many points. \\
\end{proof}

%%%%%%%%%%%%%%%%%%%%%%%%%%%%%%%%%%%%%%%%%%%%%%%%%%%%%%%%%%%%%%%%%%%%%%%%%%%%%%%%%%%%%%%%%%%%%%%%%%%%%%

\section{Monomial Case} \label{Kap5} 
$ $ \\
\noindent The goal of this section is to decrease the order of the purely inseparable equation $$G=x^p+F(y,z)$$ with $\textnormal{ord}_0(G)=p$ in every point of the singular surface $X=V(G)\subset \A^3_K$ by a finite sequence of blowups to a value which is smaller than $p$. 
In section \ref{Kap2}, especially in remark \ref{Bem4}, we explained why a point blowup of such a surface  can be reduced to a point blowup of the plane curve $F(y,z)=0$ modulo $p$-th powers. 
Moreover, in section \ref{Kap3} it was shown that a finite number of point blowups transforms $F$ in every point $b$ of $X$ with $\textnormal{ord}_b(G)=p$ into a monomial times a unit (or makes the order of $G$ drop).
This is done by using a local resolution invariant associated to $F$. To decrease the order of $G$  one can therefore assume that $G$ is of form $$G(x,y,z)=x^p + y^mz^n A(y,z)$$ 
with $(m,n) \in \N^2 \setminus p \cdot \N^2$, $m+n \geq p$ and $A(0,0) \neq 0$. After a formal coordinate change one can furthermore assume that $A(y,z)=1$ (for a detailed argumentation of this, see the proof of Lemma \ref{lemma_1} in section \ref{Kap2}).
Once $F$ is monomial, there is an immediate combinatorial way to lower the order of $G$, which will be described in the sequel (this is a classical argument which works in any dimension).

Let $(y,z)$ and $(x,y,z)$ be regular parameter systems of $\widehat R_a$ and $\widehat S_{b}$, where $\widehat R_a$ and
$\widehat S_{b}$ denote the
completion of the localization of the coordinate ring $R$ of $\A_K^2$ at the point $a$ respectively the coordinate ring $S$ of $\A_K^3$ at $b=(b_1,a)$. Furthermore let $F(y,z)$
and $G(x,y,z)$ be the expansions of $f \in R/R^p$ and $g
\in S$ with respect to the chosen local coordinates.

The center of the next blowup is defined by means of the top locus $\hbox{top}(G)$ of $X$. Recall that $\hbox{top}(G)$ consists of those points $b \in X$ where the local order of $G$ attains its maximal value. Thus 
$$\hbox{top}(G)=\{b \in X; \hbox{ord}_b(G)=p\}.$$
We may assume that the top locus has no self intersections (otherwise further point blowups have to be applied to ensure this condition). \\

\noindent Then there are three different cases according to the values of $m$ and $n$:
 \\

\noindent {\it (1) Case $m \geq p$:} This implies that $G \in
\langle x,y \rangle^p$ and hence the $z$-axis is included in the top locus of
$V(G)$.
In this case we choose locally the $z$-axis as the center of the blowup. This
yields in the $x$-chart a variety which is smooth in all of its points and in the $y$-chart
$G^*(x,y,z)=y^p \cdot (x^p+y^{m-p}z^n)$
with $m-p < m$. Hence induction can be applied until $m<p$. \\
\\
{\it (2) Case $n \geq p$:} Symmetrically, we choose locally the
$y$-axis as center and apply induction
until $n <p$. \\ \\Iterate this process until both $m$ and $n$ are less than $p$. \\
\\ 
{\it (3) Case $m<p$ and $n<p$:} In this situation we choose as
center the origin of $\A_K^3$, which is in this case the only element of the top locus of $V(G)$. This yields in the $x$-chart a variety which is smooth in all of its points. In the $y$-chart, and analogously in the $z$-chart, one
gets $G^*(x,y,z)=y^p(x^p+y^{m+n-p}z^n)$ with $m+n-p <m$, and
therefore induction on $(m,n)$ works.
\\	

\ind Altogether this yields that $G$ is given,  after finitely many blowups where the centers have to be chosen in the manner described above, locally in every (singular) point of $V(G)$ by $$G(x,y,z)=x^p+F(y,z)$$ with $\textnormal{ord}(F)<p$. \\

\begin{Bem} \label{transversality} 
In order to achieve an embedded resolution of the purely inseparable two-dimensional hypersurface $X$, it is necessary that in every step of the resolution algorithm the chosen center is transversal to the already existing exceptional divisor. 
In this section it was shown that the only higher dimensional centers which are possibly required during our algorithm, are the $y$- and the $z$-axis of $\A^3_K$. If the already existing exceptional divisor is not yet transversal to one of the chosen axis, then one first has to apply point blowups in order to achieve transversality.
%\\
\end{Bem}

%-------------------------------------------------
%-------------------------------------------------
%-------------------------------------------------

\section{A second resolution invariant}  \label{invariant2}

\noindent In this section we will define a second local resolution invariant which also works for surfaces in characteristic $p$. It is a modification of the classical resolution invariant used in characteristic zero. Furthermore we will prove that this invariant also drops lexicographically under point blowups (except for a specific quasi-monomial, which can be resolved directly) and hence can be used alternatively to prove Theorem \ref{Theo1} and Corollary \ref{Theo2}. \\

\subsection{Definition of the second invariant} \label{def2}
$ $ \\
\\
Let $R$ be the coordinate ring of the affine plane $\A^2_K$ over an algebraically closed field $K$ of characteristic $p$. Furthermore let $R_a$ be the localization of
$R$ at a closed point $a$ of $\A_K^2$ and $\widehat R_a$ its completion. 
We fix for the entire section a local flag $\Fe$ in $\A_K^2$ at $a$. By $(y,z)$ we denote local coordinates subordinate to $\Fe$ and by $F=F(y,z)$ the expansion of an element $f \in Q=R/R^p$ in $K[[y,z]]$. Moreover let $N=N(F)$ be the Newton polygon of $F$ and $A \subset \N^2$ its set of vertices.  

Denote by
$\Orderz(F) =
\min_{(\alpha,\beta) \in A} \ \beta$
the {\it order of $F$   with respect to $z$} (see figure \ref{Definition divorder}). Then the {\it \dorder}  of $F$ is defined as
$$\Dorder(F)=\textnormal{ord}(F)-\Ordery(F)-\Orderz(F).$$
It is thus the maximal side length of all equilateral axes-parallel triangles which can be inscribed in $((\Ordery(F),\Orderz(F))+\R_+^2) \setminus N(F)$ (see figure \ref{Definition divorder}). Or in other words, if $y^m z^n$ is the maximal monomial which can be factored from $F(y,z)$ and $H(y,z)=y^{-m}z^{-n}F(y,z)$, then $\Dorder(F)=\textnormal{ord}(H).$ The \dorder \ thus measures the distance of $F$ from being a monomial up to units. It will constitute together with a correction term the first component of our new resolution invariant. \\

%	\input{./Bilder/Quings/divorder}
	%% Definition Höhe, totale Höhe

 \begin{figure}[h]
 \begin{center}
 \begin{minipage}{6cm}
 \beginpicture

 \setcoordinatesystem units <6pt,6pt> point at 0 0

 \setplotarea x from -1 to 24, y from -1 to 18
 \axis left shiftedto x=0 ticks numbered from 25 to 18 by 25
                                short unlabeled from 1 to 18 by 1 /
 \axis bottom shiftedto y=0 ticks numbered from 25 to 24 by 25
                                short unlabeled from 1 to 24 by 1 /

 \put {\circle*{5}} [Bl] at  6  15
 \put {\circle*{5}} [Bl] at  12  6
 \put {\circle*{5}} [Bl] at  22  3

 \plot 6 18  6 15 12 6  22 3  24 3  /   %Streckenzug

 \vshade   6 15 18  <,z,,>  12 6 18 <,z,,> 22 3 18 <,z,,> 24 3 18 /

\put {\vector(1,0){36}} at 3 15.7

\put {\vector(-1,0){36}} at 3 15

\put {\small{$\Orderz(F)$}} at 3 15.7

%%\put {\vector(0,1){96}} at 6 6

%%\put {\vector(0,-1){96}} at 6 6

%%\put {\small{\Total(F)}} at 8.3 1.5

\put {\vector(0,1){18}} at 22  1.5

\put {\vector(0,-1){18}} at 22  1.5

\put {\small{$\Ordery(F)$}} at 19.4 1.5

%%\put {\vector(1,0){48}} at 9  6.65

%%\put {\vector(-1,0){48}} at 9  6

%%\put {\small{\Slope(F)}} at 9.3 5.1

\put {\small{$\Dorder(F)$}} at 10.2 2.1

\setdots <1 pt>

\plot 6 3   15 3  /

\plot 6 3   6 12   15 3  /

\setdots <6 pt>

\plot 6 3   6 15  /

\plot 6 3   22 3  /

%\put {\line(1,0){90}} at 8.5 2

 %\multiput {\multiput {\circle*{5}} [Bl] at 0  0 *4 3 0 /} [Bl] at 0 0 *4 0 3 /
 \put {$z$} at 24 -0.8
 \put {$y$} at -0.8 18
 \endpicture

 %-----------------------------------------
\end{minipage}
  \\[2ex]
 \begin{minipage}{11cm}
 \begin{abbildung}\label{Definition divorder} \begin{center} $\Ordery(F)$, $\Orderz(F)$
 and $\Dorder(F)$ of
 $F$. \end{center} \end{abbildung}
 \end{minipage}
 \end{center}
 \end{figure}

\noindent The second component of our new resolution invariant will be defined as follows: If $N$ is not a quadrant, we set the  {\it \dent} of $F$ as the vector
$$\Dent(F)=(\alpha_1-\alpha_2,\beta_2-\beta_1),$$ where $(\alpha_1,\beta_1)$ and
$(\alpha_2,\beta_2)$ denote those elements of $A$ whose first
component have the highest respectively second highest value among
all vertices of $A$.  The first respectively second component of this vector will be denoted by $\Updent(F)$ and $\Indent(F)$ and called the {\it \updent} respectively {\it \indt} of $F$. 
 
\ind It is clear that $\Height(F_d)=\Dorder(F_d)$. Therefore 
Lemma \ref{lem6} of section \ref{Kap3.1} tells us
that also the \dorder \ can increase in characteristic $p > 0$ under blowup at most by $1$. But the modification of the measure $\Dorder(F)$ in order to get  a decreasing resolution invariant is more involved. Recall that the {\it adjacency} $\Adj(F)$ of $F$ is equal to $2$,
$1$ or $0$ according to $F$ being {\it adjacent}, $\Ordery(F)=0$,
{\it close}, $\Ordery(F)=1$, or {\it distant}, $\Ordery(F)\geq 2$. The {\it \defect} of $F$ is defined as follows: 

If $\Dorder(F)=\Degreey(F)-\Ordery(F)$, the {\it \defect}of $F$ is defined to be $1+\delta$ for $F$ being adjacent, $\varepsilon$ for $F$ being close and $0$ otherwise.
If $\Dorder(F)=\Degreey(F)-\Ordery(f)-1$, the {\it \defect}of $F$ is set equal to $\delta$ for $F$ being adjacent and $0$ otherwise. And if $\Dorder(F) \leq \Degreey(F)-\Ordery(f)-2$, the {\it \defect}of $F$ is defined as $0$.

\ind In all cases $\varepsilon, \delta$ denote arbitrarily chosen  positive numbers between $0$ and $1$ with $\varepsilon<\delta$. 

\ind The \defect is a correction term that takes into account -- additionally to the position of the Newton polygon $N(F)$ with respect to the $z$-axis  -- also the occurrence of edges in $N(F)$ whose angle with the horizontal line is bigger than $45°$. This is similar to the correction term \Bonus \ defined earlier.
Note that the definition breaks the symmetry
between $y$ and $z$. In figure \ref{Bonus2} some possible configurations of $N(F)$ and the corresponding values of $\Defect(F)$ are illustrated. \\
      
%     \input{./Bilder/Quings/Bonus2} 
       
%% anliegend, nahe, nicht nahe

 \begin{figure}[h]
 \begin{center}
 \begin{minipage}{4cm}
 \beginpicture

 \setcoordinatesystem units <8pt,8pt> point at 0 0

 \setplotarea x from -1 to 9, y from -1 to 6
 \axis left shiftedto x=0 ticks numbered from 5 to 6 by 5
                                short unlabeled from 1 to 6 by 1 /
 \axis bottom shiftedto y=0 ticks numbered from 5 to 9 by 5
                                short unlabeled from 1 to 9 by 1 /

 \put {\circle*{5}} [Bl] at  1  5
 \put {\circle*{5}} [Bl] at  3  2
 \put {\circle*{5}} [Bl] at  7  0

 \plot 1 6    1 5   3 2   7 0   9 0   /   %Streckenzug

 \vshade   1 5 6  <,z,,> 3 2 6 <,z,,> 7 0 6 <,z,,> 9 0 6 /

\put{\small{$\Defect(F)=\delta$:}} at 4.5 7.5

 %\multiput {\multiput {\circle*{5}} [Bl] at 0  0 *4 3 0 /} [Bl] at 0 0 *4 0 3 /
 \put {$z$} at 9 -0.8
 \put {$y$} at -0.8 6
 \endpicture
\hspace{1cm}
%----------------------------------------------------------
\end{minipage}
\begin{minipage}{4cm}
 \beginpicture

 \setcoordinatesystem units <8pt,8pt> point at 0 0

 \setplotarea x from -1 to 9, y from -1 to 6
 \axis left shiftedto x=0 ticks numbered from 5 to 6 by 5
                                short unlabeled from 1 to 6 by 1 /
 \axis bottom shiftedto y=0 ticks numbered from 5 to 9 by 5
                                short unlabeled from 1 to 9 by 1 /

 \put {\circle*{5}} [Bl] at  1  5
 \put {\circle*{5}} [Bl] at  2  2
 \put {\circle*{5}} [Bl] at  6  0

 \plot 1 6    1 5   2 2   6 0   9 0   /   %Streckenzug

 \vshade   1 5 6  <,z,,> 2 2 6 <,z,,> 6 0 6 <,z,,> 9 0 6 /

\put{\small{$\Defect(F)=0$:}} at 4.5 7.5

 %\multiput {\multiput {\circle*{5}} [Bl] at 0  0 *4 3 0 /} [Bl] at 0 0 *4 0 3 /
 \put {$z$} at 9 -0.8
 \put {$y$} at -0.8 6
 \endpicture
\hspace{1cm}
%----------------------------------------------------------
\end{minipage}
\begin{minipage}{4cm}
 \beginpicture

 \setcoordinatesystem units <8pt,8pt> point at 0 0

 \setplotarea x from -1 to 9, y from -1 to 6
 \axis left shiftedto x=0 ticks numbered from 5 to 6 by 5
                                short unlabeled from 1 to 6 by 1 /
 \axis bottom shiftedto y=0 ticks numbered from 5 to 9 by 5
                                short unlabeled from 1 to 9 by 1 /

 \put {\circle*{5}} [Bl] at  0 4
 \put {\circle*{5}} [Bl] at  2  2
 \put {\circle*{5}} [Bl] at  6  1

 \plot 0 6    0 4  2 2   6 1   9 1   /   %Streckenzug

 \vshade   0 4 6  <,z,,> 2 2 6 <,z,,> 6 1 6 <,z,,> 9 1 6 /

\put{\small{$\Defect(F)=\varepsilon$:}} at 4.5 7.5

 %\multiput {\multiput {\circle*{5}} [Bl] at 0  0 *4 3 0 /} [Bl] at 0 0 *4 0 3 /
 \put {$z$} at 9 -0.8
 \put {$y$} at -0.8 6
\endpicture
%----------------------------------------------------------
\end{minipage}
    \\[2ex]
 \begin{minipage}{11cm}
 \begin{abbildung} \label{Bonus2} \begin{center} Some examples for $\Defect(F)$. \end{center}\end{abbildung}
 \end{minipage}
 \end{center}
 \end{figure}

%%%%%%%%%%%%%%%%%%%%%%%%%%%%%%%%%%%%%%%%%%%%%%%%%

\noindent Now these measures will be associate in a coordinate independent
way to residue classes $f$ in $R/R^p$. Denote by $\Ce=\Ce_\Fe$  as usual the set of subordinate local coordinates $(y,z)$
in $\widehat R_a$. Since the highest vertex $c=(\alpha,\beta)$
of $N=N(F)$ does not depend on the choice of the subordinate
coordinates, $\Degreey(F)$ and
$\Orderz(F)$ take the same value for all elements in $\Ce$. Recall that also the value $\textnormal{ord}(F)$ is independent of the choice of subordinate coordinates and is called the {\it order} of $f \in R/R^p$.

For $f\in R/R^p$ with expansion $F=F(y,z)$ at $a$ with respect to $(y,z)\in
\Ce$ we set
\begin{eqnarray*} \Dorder_a(f) &=& \min  \{\Dorder(F); (y,z)\in
\Ce\} \\
&=& \textnormal{ord}(F)- \textnormal{ord}_z(F) - \max\{\textnormal{ord}_y(F); (y,z) \in \Ce\}
\end{eqnarray*}
and call
it the {\it \dorder} of $f$. This number only depends on $f$, the
point $a$ and the chosen flag $\Fe$. 

\ind We say that $f$ is {\it
monomial} at $a$ if there exists a local (not necessarily subordinate) coordinate change 
transforming $F$ into a monomial $y^\alpha z^\beta$ times a unit in $K[[y, z]]$. Note 
that this is in particular the case if  $\Dorder_a(f)=0$ (which is equivalent to $\Height_a(f)=0$).  

\ind Since $\Adj(F)$ takes the same value, say $\Adj_a(f)$, for all coordinates realizing $\Dorder(f)$, it is a simple matter to check that also $\Defect(F)$ takes the same value, say $\Defect_a(f)$, for all these coordinates.
Therefore the {\it \revdorder}
\begin{eqnarray*} \Revdorder_a(f)&:=&
\Dorder_a(f)-\Defect_a(f) \\ &=&\min\{\Dorder(F)-\Defect(F); (y,z)\in
\Ce\}\end{eqnarray*} only depends on $f\in R/R^p$, the point $a$ and the chosen
flag $\Fe$. This will be the first component of our new local resolution invariant.
We will leave out the reference to the point $a$ when $a$ is fixed and simply write $$\Revdorder(f)=\Dorder(f)-\Defect(f).$$ \smallskip

%%%%%%%%%

\ind The second component of our new local resolution invariant will be
$$\Dent_a(f):=(\Updent_a(f),\Indent_a(f)),$$ where $\Updent(F)$  is minimized and afterwards $\Indent(F)$  is maximized over all subordinate coordinates $(y,z) \in \Ce$ for which the expansion $F(y,z)$ fulfills $\Dorder(F)=\Dorder_a(f)$.
It also only depends on $f\in R/R^p$, the point $a$ and the chosen flag
$\Fe$. Again we omit the reference to $a$ and simply write
$\Dent(f)$. 

\ind The new local resolution invariant of $f\in R/R^p$ at $a$ with respect to $\Fe$ is then defined as $$j_a(f) = (\Revdorder_a(f), \Dent_a(f)),$$  considered
with respect to the lexicographic order with $(0,1) < (1,0)$. Note that $j_a(f)$ is an element of a well-ordered set. \\

%%%%%%%%%%%%%%%%%%%%%%%%%%%%%%%%%%%%%%%%%%%%%
%%%%%%%%%%%%%%%%%%%%%%%%%%%%%%%%%%%%%%%%%%%%%

\subsection{Non-increase of the \revdorder  under blowup}
$ $ \\
\\
In order to prove Theorem \ref{Theo1} for the resolution invariant defined in section \ref{def2}, we start by showing the following proposition:

\begin{Prop} \label{complicacy}
Let $f$ be an element of $R/R^p$, which is not a (specific) quasi-monomial, and let $F \in K[[y,z]]$ be its expansion with respect to subordinate coordinates $(y,z) \in \Ce_{\Fe}$ realizing the \dorder \ of $f$. Furthermore let $F^*(y,z)$ be one of the transformations $F^*(y,z)=F(yz+tz,z)$ or $F^*(y,z)=F(y,yz)$ and $f^*$ the corresponding element in $R'/R'^p$. Then 
$$\Revdorder(f^*) \leq \Revdorder(f). \ \ \ \ \ (1)$$ 
Moreover, if either the translational move (T) is forced, or there exist subordinate coordinates realizing the height of $f$ such that the 
blowup $\widehat {R}_a \rightarrow \widehat R'_{a'}$ is given by move (V), then 
$$\Revdorder(f^*)  < \Revdorder(f),$$ where $F^*(y,z)=F(yz+tz,z)$ with $t \neq 0$.
\end{Prop}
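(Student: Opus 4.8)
The plan is to mirror closely the proof of Proposition \ref{intricacy}, exploiting that $\Dorder(F_d)=\Height(F_d)$ (as noted just before the statement) so that Moh's bound (Lemma \ref{lem5}) and its consequence Lemma \ref{lem6} apply verbatim to the \dorder. As in section \ref{Kap3}, the argument splits into the three cases (T), (H), (V), and the structural fact to keep in mind throughout is that $\Dorder(F)=\Order(F)-\Ordery(F)-\Orderz(F)$ equals $\Degreey(F)-\Ordery(F)$ only when $N(F)$ has no edge steeper than $45^\circ$; in general $\Dorder(F)\le\Degreey(F)-\Ordery(F)$, and the \defect\ was precisely designed to absorb the gap.

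For the \emph{vertical move} $F^*(y,z)=F(y,yz)$: since $f$ is not a monomial, $\Dorder(F)>0$, and a one-line computation (the vertices move up the diagonal, so $\Orderz$ increases by $\Ordery$ while $\Order$ increases by $\Ordery$ too, etc.) gives $\Dorder(F^*)\le\Dorder(F)-1$; in fact one expects $\Dorder(F^*)\le\Dorder(F)-2$ unless $N(F)$ has width $1$. Then, exactly as in paragraph (V) of section \ref{vertical-move}, the only obstruction to $\Revdorder(F^*)<\Revdorder(F)$ is the quasi-monomial configuration (width $1$, adjacent), which is excluded by hypothesis (or handled by line blowups). For the \emph{horizontal move} $F^*(y,z)=F(yz,z)$: the vertices of $N(F)$ move horizontally, so $\Ordery$, $\Degreey$ and the adjacency are all unchanged, whence $\Dorder(F^*)\le\Dorder(F)$ and $\Adj(F^*)=\Adj(F)$; one must check that the \defect\ cannot jump \emph{down} in a way that spoils the inequality, i.e. that if $\Dorder(F)=\Degreey(F)-\Ordery(F)-k$ then $\Dorder(F^*)=\Degreey(F^*)-\Ordery(F^*)-k'$ with $k'\ge k$ — this is clear because a horizontal shear only makes edges shallower (never steeper), so the ``steep part'' of the polygon can only shrink.

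The \emph{translational move} $F^*(y,z)=F(yz+tz,z)$, $t\ne 0$, is the main obstacle, just as in section \ref{Kap3.1}. Here I would reduce, via remark \ref{Ordery}, to the case $\Ordery(F^*)<\Ordery(F)$ and then chain the estimates exactly as in the proof of Proposition \ref{translationalmove}: by Lemma \ref{degreey}-type bookkeeping plus Lemma \ref{lem6},
\begin{equation*}
\Dorder(F^*)\le\Degreey(F^*)-\Ordery(F^*)\le\Height(F_d)+\Parity(d)=\Dorder(F_d)+\Parity(d)\le\Dorder(F)+\Parity(d),
\end{equation*}
and then subtracting the \defect\ and running through the four transitions distant$\to$\{distant, close, adjacent\} and close$\to$adjacent (using $\varepsilon<\delta$ and that $F^*$ adjacent forces a drop of at least $2-\Adj(F^*)=2$ in the pre-\defect\ quantity while $\Bonus$-type correction is at most $1+\delta$) yields the strict inequality $\Revdorder(f^*)<\Revdorder(f)$. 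The one genuinely new bit of care, compared with the \height\ argument, is that the \defect\ has a finer three-tier definition (it sees the difference $\Degreey-\Ordery-\Dorder$), so in each transition one must also track whether this difference is $0$, $1$, or $\ge 2$ before and after; but since a translational move can only decrease $\Degreey-\Ordery-\Dorder$ is false in general, this requires a short case check rather than a new idea. I expect that verifying the \defect\ bookkeeping in the translational case — that no combination of (adjacency change) $\times$ (steep-edge change) breaks the inequality — is where the real work lies, and it is the analogue of the table in the proof of Proposition \ref{translationalmove}.
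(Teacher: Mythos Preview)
Your treatment of the vertical move contains a genuine error. The claim that ``a one-line computation gives $\Dorder(F^*)\le\Dorder(F)-1$'' under $F^*(y,z)=F(y,yz)$ is false. Take $p=3$ and $F=y^5z+yz^2$, which is not a quasi-monomial (it is close, not adjacent). Here $\Ordery(F)=\Orderz(F)=1$, $\Order(F)=3$, so $\Dorder(F)=1$; and $F^*=y^6z+y^3z^2$ has $\Ordery(F^*)=3$, $\Orderz(F^*)=1$, $\Order(F^*)=5$, so $\Dorder(F^*)=1$ as well. In general, writing $F=y^mz^nH$ with $\Ordery(H)=\Orderz(H)=0$, one finds $\Dorder(F^*)=\min_{(\alpha,\beta)\in\mathrm{supp}(H)}(\alpha+2\beta)-\Dorder(F)$, and when $H$ has a vertex $(0,d)$ achieving the order together with a vertex $(e,0)$ with $e\ge 2d$ (a sufficiently steep edge), this equals $\Dorder(F)$ exactly. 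You have over-transferred from the first invariant: under (V) the height drops because $\Degreey$ is fixed while $\Ordery$ jumps to $\Order(F)$, but for the \dorder\ all three of $\Order$, $\Ordery$, $\Orderz$ change and the net effect can be zero.

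The paper's argument for (V) is accordingly different from what you sketch. It first observes $\Dorder(F^*)\le\Dorder(F)$ (which \emph{is} a one-line computation) and then only establishes the \emph{non-strict} inequality (1). To do so it disposes of the case where $N(F)$ has no steep edge (then $N(F^*)$ is a quadrant), and in the remaining case notes that $\Defect(F)\in\{0,\delta\}$; when $\Defect(F)=0$ the non-strict bound is immediate, and when $\Defect(F)=\delta$ a short analysis isolates the specific quasi-monomial $z^m(cy^2+dz)+H$. The possibility $\Revdorder(f^*)=\Revdorder(f)$ under (V) genuinely remains and is handled afterwards by the second component, showing $\Updent(f^*)<\Updent(f)$ when all edges of $N(F)$ are steep. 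The ``moreover'' clause in the proposition is ambiguously worded (the trailing ``where $F^*(y,z)=F(yz+tz,z)$ with $t\neq 0$'' restricts it to the forced translational case), and the paper's own proof confirms that strict inequality under (V) is not what is being claimed. Your approaches to (H) and (T) are essentially the paper's.
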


\ind The proposition above will again be proven separately for the three different moves (T), (H) and (V) defined in section \ref{Kap2}. \medskip

%%%%%%%%%%%%%%%%%%%%%%%%%%%%%%%%%%%%%%%%%%%%%
%%%%%%%%%%%%%%%%%%%%%%%%%%%%%%%%%%%%%%%%%%%%%

\subsection*{(T) Translational moves} \label{decrease2}
$ $ \medskip

\noindent Assume
that there don't exist subordinate coordinates at $a$ realizing the \dorder \ of $f$ such that the blowup is monomial. In this situation the total transform 
$f^*$ of $f$ under the blowup $\pi$ is given as the equivalence class of the total transform 
$F^*(y, z ) = F (yz + tz , z )$ where $t \in K^*$, of a representative $F (y, z )$ of $f$ with 
$\Dorder(F ) = \Dorder(f )$. Fix such minimizing subordinate coordinates $(y, z ) 
\in \Ce_{\Fe}$
and denote by $F (y, z )$ in the sequel always the expansion of $f$ with respect to these 
chosen coordinates. 

\ind Denote by $d$ the order of $f$ and by $f_d$ its initial form. The {\it parity} $\Parity(d)$ of $d$ is defined as in section \ref{Kap3.1}, i.e., set equal to $1$ if $d \equiv 0 \textnormal{ mod } p$, and $0$ otherwise.  

Since $\Height(F_d)=\Dorder(F_d)$, Lemmata \ref{lem5} and \ref{lem6} of section \ref{Kap3.1} can be immediately applied to the \dorder \ of $F_d$ respectively $f_d$. One hence gets $$\Degreey(F^*) \leq \Dorder(F_d)+\Parity(d).$$ 
\begin{Bem} Note that the above inequality nevertheless only implies a possible increase of the \dorder \ if the Newton polygon $N(F^*)$ of $F^*$
consists only of edges whose angle with the horizontal line is smaller or equal than $45°$, i.e., if $\Height(F^*)=\Dorder(F^*)$ (see figures  \ref{Lemma5} and \ref{Lemma5b}). And moreover, it for sure 
decreases in the case that $N(F^*)$ contains edges with slope smaller than $-2$, i.e., if $\Height(F^*)-\Dorder(F^*) > 1$. \\
       
%      \input{./Bilder/Quings/Moh3} 
      
%% Moh

\begin{figure}[h]
\begin{center}
\begin{minipage}{6cm}
\beginpicture

 \setcoordinatesystem units <9pt,9pt> point at 0 0

 \setplotarea x from -1 to 13, y from -1 to 11
 \axis left shiftedto x=0 ticks numbered from 5 to 11 by 5
                                short unlabeled from 1 to 11 by 1 /
 \axis bottom shiftedto y=0 ticks numbered from 5 to 13 by 5
                                short unlabeled from 1 to 13 by 1 /

 \put {\circle*{5}} [Bl] at  1  5
 \put {\circle*{5}} [Bl] at  3  3
 \put {\circle*{5}} [Bl] at  4  3

 \plot 1 11  1 5  3 3  13 3 /

 \vshade 1 5 11 <,z,,> 3 3 11 <,z,,>  13 3 11 /

 \multiput {\multiput {\circle{5}} [Bl] at 0  0 *6 2 0 /} [Bl] at 0 0 *5 0 2 /

\put{$y$} at -0.8 11 \put {$z$} at 13 -0.8

 \endpicture
\hspace{2cm}
%---------------------------------------
\end{minipage}
\begin{minipage}{6cm}
\beginpicture

 \setcoordinatesystem units <9pt,9pt> point at 0 0

 \setplotarea x from -1 to 13, y from -1 to 11
 \axis left shiftedto x=0 ticks numbered from 5 to 11 by 5
                                short unlabeled from 1 to 11 by 1 /
 \axis bottom shiftedto y=0 ticks numbered from 5 to 13 by 5
                                short unlabeled from 1 to 13 by 1 /

 \put {\circle*{5}} [Bl] at  6  5
 \put {\circle*{5}} [Bl] at  6  3
 \put {\circle*{5}} [Bl] at  7  1
 \put {\circle*{5}} [Bl] at  7  2
  \put {\circle*{5}} [Bl] at  7  3
   \put {\circle*{5}} [Bl] at  7  0

 \plot 6 11  6 3  7 0  13 0 /

 \vshade 6 3 11 <,z,,> 7 0 11 <,z,,>  13 0 11 /

 \multiput {\multiput {\circle{5}} [Bl] at 0  0 *6 2 0 /} [Bl] at 0 0 *5 0 2 /

\put{$y$} at -0.8 11 \put {$z$} at 13 -0.8

 \endpicture
%\hspace{1cm}
%---------------------------------------
\end{minipage}
\\[2ex]
 \begin{minipage}{11cm}
 \begin{abbildung} \label{Lemma5b} \begin{center}
 $F(y,z)$ with $\Dorder(F)=2$ and
 $F^*(y,z)=F(yz+1 \cdot z,z)$ with $\Degreey(F^*)=3$, but $\Dorder(F^*)=1<2=\Dorder(F)$. \end{center} \end{abbildung}
 \end{minipage}
 \end{center}
\end{figure}

\end{Bem}

\noindent Due to remark \ref{Ordery} of section \ref{Kap3.1} it follows that it remains to consider the following situations 

\begin{center}
\begin{tabular}{ccc}
 $F$ & & $F^*$  \\
  \hline
 distant & $\rightarrow$ & distant \\
 distant & $\rightarrow$ & close \\
 distant & $\rightarrow$ & adjacent \\
 close & $\rightarrow$ & adjacent
\end{tabular}
\end{center}
Investigating these four cases in detail, one can show similarly as in the proof of Proposition \ref{translationalmove} in section \ref{Kap3.1} that 
$$\Revdorder(f^*)<\Revdorder(f).$$

%%%%%%%%%%%%%%%%%%%%%%%%%%%%%%%%%%%%%%%%%%%%%
%%%%%%%%%%%%%%%%%%%%%%%%%%%%%%%%%%%%%%%%%%%%%

\subsection*{(H, V) Horizontal and vertical moves} \label{moves2}
$ $ \medskip

\noindent The goal of this section is to prove Proposition \ref{complicacy} for the two monomial transformations $F^*(y,z)=F(yz,z)$ and $F^*(y,z)=F(y,yz)$. 

Assume for this purpose throughout this section that $(y,z)$ are subordinate coordinates realizing $\Dorder(F)=\Dorder(f)$ such that the total transform $f^*$ of $f$ under the blowup $\widehat R_a \rightarrow R'_{a'}$ is given as the equivalence class of one of the transforms $F^*(y,z)=F(yz,z)$ respectively $F^*(y,z)=F(y,yz)$ of $F$.

\ind In section 
\ref{vertical-move} we already proved the analogous statement for the measure \revheight \ defined in section \ref{Kap1}. And since the argumentation runs here quite similar, we will skip some computational parts of the proof of Proposition \ref{complicacy}. 

\ind First note that for both, the horizontal and the vertical move, the inequality $\Dorder(F^*) \leq \Dorder(F)$ holds for all series $F \in K[[y,z]]$.
We start by establishing Proposition \ref{complicacy} for the horizontal move. It is not too hard to check that if $N(F)$ contains at least one edge whose angle with the horizontal line is bigger than $45°$, i.e., if 
$(\Degreey(F)-\Ordery(F))-\Dorder(F) \geq 1$, then $\Defect(F) \in \{0,\const\}$ and $\Dorder(F^*) \leq \Dorder(F)-1$. This immediately implies $$\Dorder(F^*)-\Defect(F^*)<\Dorder(f)-\Defect(f)=\Revdorder(f).$$ We are hence left with series $F$ whose Newton polygon consists only of edges 
whose angles with the horizontal line are smaller or equal than $45°$. 
Some further, but easy, considerations show that in this case the inequality $$ \Dorder(F^*)-\Defect(F^*) \leq \Dorder(f)-\Defect(f)=\Revdorder(f)$$ is always fulfilled. And since all coordinate changes subordinate to the flag $\Fe$ leave the highest vertex of $N(F^*)$ fixed,  inequality $(1)$ follows. 
 
Now we will turn to the vertical move $F^*(y,z)=F(y,yz)$. We will assume that $N(F)$ contains at least one edge 
whose angle with the horizontal line is bigger than $45°$ (otherwise $N(F^*)$ is already a quadrant). 
It can be seen easily that then $\Defect(F)$ is either $0$ or $\const$. In the case that $\Defect(F)=0$, the inequality $(1)$ follows immediately. Therefore, let $\Defect(F)=1$. This implies that $F$ is adjacent and $\Dorder(F)=(\Degreey(F)-\Ordery(F))-1$. 
Furthermore it is a simple matter to check that if $N(F)$ contains an edge whose angle with the horizontal line is smaller or equal than $45°$, then $\Dorder(F^*) \leq \Dorder(F)-1$, hence no increase of the \revdorder can happen. So we are left with the case that $N(F)$ contains only edges whose angles with the horizontal line are bigger than $45°$. 
It is a simple matter to check that then an increase of the \revdorder is only possible if $F$ is of the form $$F(y,z)=z^m \cdot (c  y^2 + d  z) + H(y,z)$$ with $m \in \N$, $c, d \in K^*$ and $H \in K[[y,z]]$ with $N(H) \subset N(F) \setminus A$. Obviously this series is a special quasi-monomial (see section \ref{vertical-move}) and hence can be transformed into a monomial  times a unit by a finite number of further blowups, indeed here only one further blowup (and possibly a subsequent coordinate change) is necessary.  

Together with the investigation of translational moves in the last section this proves Proposition \ref{complicacy}. \\

%%%%%%%%%%%%%%%%%%%%%%%%%%%%%%%%%%%%%%%%%%%%%
%%%%%%%%%%%%%%%%%%%%%%%%%%%%%%%%%%%%%%%%%%%%%

\subsection{Decrease of the invariant}
$ $ \\
\\
In order to show that the invariant $j(f)=(\Revdorder(f),\Dent(f))$ decreases for all $f \in Q=R/R^p$ which are not quasi-monomials, it remains due to Proposition \ref{complicacy} to prove the inequality 
$$(\Revdorder(f^*),\Dent(f^*)) <_{lex} (\Revdorder(f),\Dent(f)),$$ where $f^*$ corresponds to one of the transforms $F^*(y,z)=F(yz,z)$ or $F^*(y,z)=F(y,yz)$ of a representative $F(y,z)$ of $f$ with $\Dorder(F)=\Dorder(f)$ and $\Updent(F)=\Updent(f)$, in the case that $$\Revdorder(f^*)=\Revdorder(f)\ \ \ \ \ (\triangle).$$

\ind Fix for this purpose  subordinate coordinates $(y,z) \in \Ce_{\Fe}$ with $\Dorder(F)=\Dorder(f)$ and $\Updent(F)=\Updent(f)$ such that the transform $f^*$ of $f$ is given as the equivalence class of one of the series $F^*(y,z)=F(yz,z)$ or $F^*(y,z)=F(y,yz)$.  
 
We will first concentrate on the
horizontal transformation $F^*(y,z)=F(yz,z)$. In this case one can show similarly as in section \ref{maximieren} that under the assumption $(\triangle)$ the inequality $$\Dent(f^*)<\Dent(f)$$ holds.
Now we will treat the vertical move $F^*(y,z)=F(y,yz)$. It is easy to see that $(\triangle)$ can only occur if the Newton polygon $N(F)$ consists just of edges whose angle with the horizontal line is bigger than $45°$. But in this case the vertices of $N(F)$ with the highest
respectively second highest first component transform into
vertices of the Newton polygon $N(F^*)$ of $F^*$ with the same
property. Hence it follows easily that $\Updent(f^*) \leq \Updent(F^*) < \Updent(F)$.   \\

%%%%%%%%%%%%%%%%%%%%%%%%%%%%%%%%%%%%%%%%%%%%%%%%%%%%
%%%%%%%%%%%%%%%%%%%%%%%%%%%%%%%%%%%%%%%%%%%%%%%%%%%%%

\section{Alternative approach for surface resolution in positive characteristic} \label{modified_proof}
$ $ \\
\noindent In this section we will indicate an alternative approach for resolution of surfaces which are defined by purely inseparable equations over an algebraically closed field $K$ of positive characteristic. It is based on a theorem which characterizes in any dimension completely the shape of the initial form of those purely inseparable polynomials for which the \dorder \ increases under a translational blowup (see Thm. 1, sec. 5 and Thm. 2, sec. 12 in \cite{Hausera}). We will briefly recall the theorem without giving its proof: 

\begin{Theo}
Let $\pi: (W,q') \rightarrow (W,q)$ be a local point blowup of $W=\A^{1+m}$ with center $Z=\{q\}$ the origin. Let $(x,w_m,\ldots,w_1)$ be local coordinates at $q$ such that $$G(x,w)=x^p+w^r \cdot \hat{F}(w) \in \widehat{\mathcal{O}}_{W,q}$$ has order $p$ and $\Dorder_q(w^r \cdot \hat{F})=\textnormal{ord}_q(\hat{F})$ at $q$ with exceptional divisor $w^r=0$. Let $G'$ and $F'$ be the strict transforms of $G$ respectively $F=w^r \cdot \hat{F}(w)$ at $q' \in E=\pi^{-1}(Z)$. Then, for a point $q' \in \pi^{-1}(q)$ to be a \textnormal{kangaroo point} for $G$, i.e., fulfilling $$\textnormal{ord}_{q'}(G')=\textnormal{ord}_q(G) \textnormal{ \ and \ } \Dorder_{q'}(F') > \Dorder_q(F),$$ the following conditions must hold at $q$:
\begin{enumerate}
\item[(1)] The order $\textnormal{ord}(F)=|r|+\textnormal{ord}_q(\hat{F})$ is a multiple of $p$.
\item[(2)] The exceptional multiplicities $r_i$ at $q$ satisfy
$$\overline{r_m}+ \ldots + \overline{r_1} \leq \left( \phi_p(r)-1 \right) \cdot p,$$ where $0 \leq \overline{r_i} < p$ denote the residues of the components $r_i$ of $r=(r_m,\ldots,r_1)$ modulo $p$ and $\phi_p(r):=\#\{i \leq m; r_i \not\equiv 0 \textnormal{ mod } p\}$.
\item[(3)] The point $q'$ is determined by the expansion of $G$ at $q$. It lies on none of the strict transforms of the exceptional components $w_i=0$ for which $r_i$ is not a multiple of $p$. 
\item[(4)] The initial form of $\hat{F}$ equals, up to linear coordinate changes and multiplication by $p$-th powers, a specific homogenous polynomial, which is unique for each choice of $p$, $r$ and degree. 
\end{enumerate}
\smallskip
\end{Theo}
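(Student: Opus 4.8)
The plan is to split the kangaroo condition into its two halves --- equiconstancy of the order of $G$ and strict increase of the \dorder\ of $F$ --- to reduce the \dorder-part to a rigidity statement about the initial form of $F$ at $q$, and to read off conditions (1)--(4) from the equality case of Moh's inequality.

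\emph{Reduction to initial forms, and conditions (1) and (3).} Write $d=\textnormal{ord}_q(F)$ and let $F_d$ be the initial form of $F$ at $q$. Since every monomial of $F$ is divisible by the exceptional monomial $w^r$, so is $F_d$, whence $\Dorder(F_d)\le\Dorder(F)$; and the hypothesis $\Dorder_q(w^r\hat F)=\textnormal{ord}_q(\hat F)$ says that the given coordinates already realise the \dorder. At the monomial points of $E$ the \dorder\ cannot increase (by the analysis of moves (H) and (V) applied to the \dorder), so a kangaroo point $q'$ lies at a translational point, where the higher-dimensional analogue of Lemma~\ref{lem6} gives $\Degreey(F^*)\le\Dorder(F_d)+\Parity(d)$. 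Combining this with the trivial bound $\Dorder(F^*)\le\Degreey(F^*)-\Ordery(F^*)\le\Degreey(F^*)$ and with $\Dorder(F)\ge\Dorder(F_d)$, an increase $\Dorder(F^*)>\Dorder(F)$ can occur only if $\Parity(d)=1$ --- this is (1) --- and only if all of these inequalities are equalities, so that $\Dorder(F_d)=\Dorder(F)$ (hence the whole analysis may be run with the homogeneous $F_d$) and the increase is by exactly $1$, with $\Dorder(F^*)=\Degreey(F^*)=\Dorder(F_d)+1$. For (3), the equiconstancy $\textnormal{ord}_{q'}(G')=p$ first confines $q'$ to the chart in which $x^p+F$ keeps order $p$ after dividing out the exceptional factor; and a direct check shows that if $q'$ lay on the strict transform of some $w_i=0$ with $r_i\not\equiv0\bmod p$, the monomial $w_i^{r_i}$ --- a genuine, non-$p$-th-power factor --- would persist in $F'$, forcing Moh's estimate to be strict. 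Hence $q'$ avoids all such hyperplanes and is the one point determined by $F_d$.

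\emph{Conditions (2) and (4).} The heart of the matter is the equality case of Lemma~\ref{lem6}, which one analyses as in the proof of Lemma~\ref{lem5}. After the linear change realising the translation to $q'$, the derivative $\partial_yF_d^{+}$ lies simultaneously in a high power of $\langle y\rangle$ and in a product of powers of the surviving exceptional monomials and of the linear forms cutting out $q'$; the equalities $\Ordery(F_d^{+})=\Degreey(F_d^{+})=\Dorder(F_d)+1$ force both inclusions to be tight and the orders of vanishing to add up exactly, in every relevant direction at once. Translating this into the combinatorics of $\textnormal{supp}(F_d)$, and imposing that the support avoids the holes $p\cdot\N^{n}$, yields on one side the numerical restriction $\overline{r_m}+\dots+\overline{r_1}\le(\phi_p(r)-1)\,p$ on the residues of the exceptional multiplicities, which is (2), and on the other side pins down $F_d$ --- up to a linear coordinate change and addition of a $p$-th power --- as a single explicit homogeneous polynomial depending only on $p$, $r$ and $d$, which is (4). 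Identifying that polynomial comes down to a congruence computation with binomial coefficients modulo $p$ together with a count of the monomials outside $p\cdot\N^{n}$ that keep all the tightness conditions alive.

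\emph{Main obstacle.} The genuinely hard step is precisely this rigidity statement: passing from ``the inequality of Lemma~\ref{lem6} is an equality'' to the exact shape of $F_d$ in (4) and the sharp bound (2) requires analysing \emph{simultaneously} all the tightness conditions produced by the differential argument in the various coordinate directions together with the $p$-adic congruences imposed by the holes, and it is here that the classification really goes beyond Moh's bound. By contrast, the reductions to initial forms and to a single chart, and the derivations of (1) and (3), are essentially bookkeeping once the transformation rules (as in Remark~\ref{Bem4}) and Lemma~\ref{lem6} are in hand. In dimension larger than $2$ the only extra difficulty is that the translation to $q'$ now carries several parameters, so the single linear form of the surface case becomes a configuration of hyperplanes and the tightness combinatorics becomes correspondingly heavier, but the structure of the argument is unchanged.
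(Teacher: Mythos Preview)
The paper does not prove this theorem. Immediately before stating it, the authors write: ``We will briefly recall the theorem without giving its proof,'' and refer the reader to Theorem~1, section~5 and Theorem~2, section~12 of \cite{Hausera}. So there is no proof in the paper to compare your proposal against.

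That said, a few comments on your sketch as it stands. Your derivation of (1) from Moh's bound is sound and is exactly the mechanism the paper exploits elsewhere (Lemma~\ref{lem5}, Lemma~\ref{lem6}); likewise your argument for (3) --- that remaining on an exceptional component with $r_i\not\equiv 0\bmod p$ makes the move monomial in that direction and blocks any increase --- is the right idea. Where your sketch is genuinely incomplete is precisely where you flag it: the passage from ``equality in Moh'' to the arithmetic inequality (2) and the uniqueness statement (4) is the substance of Hauser's classification in \cite{Hausera}, and your paragraph on it is a description of what needs to be done rather than an argument. In particular, the claim that the tightness conditions plus the holes in $p\cdot\N^n$ ``pin down $F_d$'' up to linear change and $p$-th powers is not something one can read off from the differential argument of Lemma~\ref{lem5} alone; in \cite{Hausera} this requires a separate and fairly involved combinatorial analysis of which monomials can survive. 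So your outline is consistent with the methods of the paper, but for (2) and (4) it remains a plan rather than a proof, and you would need to consult \cite{Hausera} to fill it in.
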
 

\ind  The point $q$ prior to a kangaroo point will be called \textnormal{antelope point}. 
Note that for surfaces ($m=2$) condition (2) of the last theorem can be reformulated as $$r_1, r_2 \not\equiv 0 \textnormal{ mod } p \textnormal{ \ and \ } \overline{r_1}+\overline{r_2} \leq p.$$ Consequently, condition (3) implies that the point $q$ has to leave both exceptional components in order to arrive at a kangaroo point. Together this yields that an increase of the \dorder \ can only occur when applying a translational move subsequent to at least one horizontal and one vertical move.   Therefore we will analyze how the \dorder \ changes under such moves prior to the jump at the kangaroo point: 

Suppose that before this increase of the \dorder \ at the kangaroo point, already $u$ horizontal and $v$ vertical moves have taken place (in a specific order, with $u,v\geq 1$). 
Assume for sake of simplicity further that $F(y,z)$ has at the very beginning of these series of blowups been a binomial, i.e., has been of the form, $$F(y,z)=y^r z^s \cdot \left(c y^k+d z^l\right) \in K[[y,z]]/K[[y^p,z^p]]$$ with $r,s \in \N$, $k,l  \in \N_{>0}$ and $c, d \in K$. Clearly a series of $u$ horizontal and $v$ vertical moves contains at least one subsequence where a horizontal move is followed by a vertical one or the other way around. Denote by $F^{(c)}$ the transform of $F$ under the moves prior to the first of these subsequences, where $0 \leq c \leq u+v$. Note that $F^{(c)}$ is of the form $$F^{(c)}(y,z)=y^{r'}z^{s'} \cdot (c'y^{k'}+d'z^{l'})$$ with $r',s',k',l' \in \N$ and $c',d' \in K$. Since we are considering moves prior to an increase at the kangaroo point, it follows that $\Dorder(F^{(c)})=\min\left(k',l'\right)>0$. Without loss of generality assume further that afterwards first a horizontal move and then a subsequent vertical move is applied to $F^{(c)}$ (clearly the case of applying the moves in the reverse order works symmetrically). Now consider the transforms of $F^{(c)}$ under these two moves, i.e., 
$F^{(c+1)}(y,z)=F^{(c)}(yz,z)$ and $F^{(c+2)}(y,z)=F^{(c+1)}(y,yz)$ (see figure \ref{2moves}). 

%	\input{./Bilder/Quings/2moves}
	
%% Moh

\begin{figure}[h]
\begin{center}
\begin{minipage}{4cm}
\beginpicture

 \setcoordinatesystem units <15pt,15pt> point at 0 0

 \setplotarea x from -1 to 5, y from -1 to 5
 \axis left shiftedto x=0 ticks numbered from 6 to 5 by 5
                                short unlabeled from 1 to 5 by 1 /
 \axis bottom shiftedto y=0 ticks numbered from 6 to 5  by 5
                                short unlabeled from 1 to 5 by 1 /

 \put {\circle*{5}} [Bl] at  0  2
 \put {\circle*{5}} [Bl] at  3 0

 \put {\vector(0,1){30}} at -0.5  1
\put {\vector(0,-1){30}} at -0.5  1
\put {\small{k'}} at -1 1

\put {\vector(1,0){45}} at 1.5 -0.2
\put {\vector(-1,0){45}} at 1.5  -0.5
\put {\small{l'}} at 1.5 -1

 \plot 0 5  0 2  3 0  5 0 /

 \vshade 0 2 5 <,z,,> 3 0 5 <,z,,>  5 0 5 /

 %\multiput {\multiput {\circle{4}} [Bl] at 0  0 *1 5 0 /} [Bl] at 0 0 *1 0 5 /

\put{$y$} at -0.8 5 \put {$z$} at 5 -0.8

 \endpicture
\hspace{1cm}
%---------------------------------------
\end{minipage}
\begin{minipage}{4cm}
\beginpicture

 \setcoordinatesystem units <15pt,15pt> point at 0 0

 \setplotarea x from -1 to 5, y from -1 to 5
 \axis left shiftedto x=0 ticks numbered from 6 to 5 by 5
                                short unlabeled from 1 to 5 by 1 /
 \axis bottom shiftedto y=0 ticks numbered from 6 to 5 by 5
                                short unlabeled from 1 to 5 by 1 /
 
 \put {\circle*{5}} [Bl] at  2 2
 \put {\circle*{5}} [Bl] at  3 0

\put {\vector(0,1){30}} at 2  1
\put {\vector(0,-1){30}} at 2  1
\put {\small{k'}} at 1.5 1

\put {\vector(1,0){15}} at 2.5 -0.2
\put {\vector(-1,0){15}} at 2.5  -0.5
\put {\small{l'-k'}} at 2.5 -1

 \plot 2 5  2 2  3 0  5 0 /

 \vshade 2 2  5 <,z,,> 3 0 5 <,z,,>   5 0  5 /

 %\multiput {\multiput {\circle{4}} [Bl] at 0  0 *1 5 0 /} [Bl] at 0 0 *1 0 5 /  

\put{$y$} at -0.8 5 \put {$z$} at 5 -0.8

 \endpicture
\hspace{1cm}
%---------------------------------------
\end{minipage}
\begin{minipage}{4cm}
\beginpicture

 \setcoordinatesystem units <15pt,15pt> point at 0 0

 \setplotarea x from -1 to 5, y from -1 to 5
 \axis left shiftedto x=0 ticks numbered from 6 to 5 by 5
                                short unlabeled from 1 to 5 by 1 /
 \axis bottom shiftedto y=0 ticks numbered from 6 to 5 by 5
                                short unlabeled from 1 to 5 by 1 /

 \put {\circle*{5}} [Bl] at  2 4
 \put {\circle*{5}} [Bl] at  3 3

\put {\vector(0,1){15}} at 2  3.5
\put {\vector(0,-1){15}} at 2  3.5
\put {\small{2k'-l'}} at 1.2 3.5

\put {\vector(1,0){15}} at 2.5  3.3
\put {\vector(-1,0){15}} at 2.5  3
\put {\small{l'-k'}} at 2.5 2.5

 \plot 2 5  2 4  3 3  5 3  /

 \vshade 2 4 5 <,z,,> 3 3  5 <,z,,>  5 3 5 /

 %\multiput {\multiput {\circle{4}} [Bl] at 0  0 *1 5 0 /} [Bl] at 0 0 *1 0 5 /

\put{$y$} at -0.8 5 \put {$z$} at 5 -0.8

 \endpicture
%\hspace{1cm}
%---------------------------------------
\end{minipage}
\\[2ex]
 \begin{minipage}{12cm}
 \begin{abbildung} \label{2moves} \begin{center}
 The transforms of $F^{(c)}$ under a horizontal and a vertical move. \end{center} \end{abbildung}
 \end{minipage}
 \end{center}
\end{figure}

\noindent In the case that $N(F^{(c+2)})$ is not a quadrant, which especially presumes that $$\Dorder\left(F^{(c)}\right)=k'<l' \textnormal{ and } l'-k'<k', \hspace{20pt} (\star)$$ the \dorder \ of $F^{(c+2)}$ is given by
$\Dorder(F^{(c+2)})=\min \left(2k'-l',l'-k'\right).$
But due to $(\star)$ it follows easily (see figure \ref{halbe}) that $$\Dorder\left(F^{(c+2)}\right) \leq \frac{k'}{2} = \frac{1}{2} \cdot \Dorder\left(F^{(c)}\right).$$

\begin{figure}[h]
\begin{center}
\begin{minipage}{4cm}
\beginpicture

 \setcoordinatesystem units <15pt,15pt> point at 0 0

 \setplotarea x from 0 to 10, y from 0 to 0
%% \axis left shiftedto x=0 ticks numbered from 0 to 0 by 5
%%                                short unlabeled from 0 to 0 by 1 /
 \axis bottom shiftedto y=0 ticks numbered from 0 to 0 by 1
                                short unlabeled from 0 to 10 by 1 /

\put {\circle*{5}} [Bl] at  0 0
\put {\circle*{5}} [Bl] at  5 0 
\put {\circle*{5}} [Bl] at  10 0
\put {\circle*{5}} [Bl] at  7 0 

\put{{k'}} at 5 -0.7
\put{{2k'}} at 10 -0.7
\put{{l'}} at 7 -0.7

\put {\vector(1,0){30}} at 6  0.8
\put {\vector(-1,0){30}} at 6  0.5
\put {{l'-k'}} at 6 1

\put {\vector(1,0){45}} at  8.5  0.8
\put {\vector(-1,0){45}} at 8.5  0.5
\put {{2k'-l'}} at 8.5  1

\put {\vector(1,0){75}} at 7.5  -1.0
\put {\vector(-1,0){75}} at 7.5  -1.3
\put {{k'}} at  7.9 -1.7

 \plot 0 0  10 0  /

%% \vshade 2 4 5 <,z,,> 3 3  5 <,z,,>  5 3 5 /

 %\multiput {\multiput {\circle{4}} [Bl] at 0  0 *1 5 0 /} [Bl] at 0 0 *1 0 5 /

%% \put{$y$} at -0.8 5 \put {$z$} at 5 -0.8

 \endpicture
%\hspace{1cm}
%---------------------------------------
\end{minipage}
\\[2ex]
 \begin{minipage}{12cm}
 \begin{abbildung} \label{halbe} \begin{center}
 Illustration of the inequalities $(\star)$ and the value of $\Dorder(F^{(c+2)})$. \end{center} \end{abbildung}
 \end{minipage}
 \end{center}
\end{figure}

\noindent In the case that $\Dorder(F^{(c)})$ has already been smaller or equal to the half of $\Dorder(F)$, i.e., $\Dorder(F^{(c)}) \leq \frac{1}{2} \cdot \Dorder(F),$ we are  done since we have already seen that the \dorder \ can't increase under monomial moves and it thus immediately follows that
$$\Dorder\left(F^{(u+v)}\right) \leq \Dorder\left(F^{(c)}\right) \leq \frac{1}{2} \cdot \Dorder(F),$$
where $F^{(u+v)}$ denotes the transform after the $u$ horizontal and the $v$ vertical moves prior to the increase at the kangaroo point. So it remains to consider the case $\Dorder(F^{(c)}) > \frac{1}{2} \cdot \Dorder(F).$ But in this situation one has $$\Dorder\left(F^{(u+v)}\right) \leq \Dorder\left(F^{(c+2)}\right) \leq \frac{1}{2} \cdot \Dorder\left(F^{(c)}\right) \leq \frac{1}{2} \cdot \Dorder(F),$$
since clearly $\Dorder(F^{(c)}) \leq \Dorder(F)$.

\ind It is not hard to see that the previous inequalities also hold for an arbitrary series $F(y,z)$. This proves the following proposition, which is also already indicated in \cite{Hausera}:

\begin{Prop}
Let $\pi: (\widetilde{\A}^3,b') \rightarrow (\A^3,b)$ be a local point blowup with center $Z=\{b\}$ and $(x,y,z)$ local coordinates at $b$ such that $G(x,y,z)=x^p+F(y,z)$ has order $p$ at $b$. Let $b'$ be a kangaroo point for $G$ and $b$ its antelope point. Further let be given a sequence of point blowups prior to $\pi$ in a three-dimensional smooth ambient space for which the subsequent centers are equiconstant points. Let $b°$ be the last point below the antelope point $b$ where none of the exceptional components through $b$ has appeared yet. Then, the \dorder \ has dropped between $b°$ and the antelope point $b$ of the kangaroo point $b'$ at least to its half.
\end{Prop}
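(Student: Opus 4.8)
The plan is to assemble the proposition from the computations already set up in this section, the only genuinely new ingredient being the input from the structure theorem. First I would record what that theorem gives with $m=2$: if $b'$ is a kangaroo point over the antelope $b$, then by condition (2) the two exceptional multiplicities $r_1,r_2$ of $F$ at $b$ are both prime to $p$ with $\overline{r_1}+\overline{r_2}\le p$, and by condition (3) the blow-up $\pi$ from $b$ to $b'$ is a translational move. As already argued, such a configuration can be reached only when a translational move follows at least one horizontal and at least one vertical move; and since $b^\circ$ is by definition the last point below $b$ through which none of the two exceptional components at $b$ passes, these horizontal and vertical moves may be taken to occur in the stretch from $b^\circ$ to $b$. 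Hence it suffices to control the decrease of $\Dorder$ along the $u\ge 1$ horizontal and $v\ge 1$ vertical moves joining $b^\circ$ to $b$.

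Next I would isolate one adjacent pair of these monomial moves of opposite type: since $u,v\ge 1$, somewhere along the sequence a horizontal move is immediately followed by a vertical one, or a vertical by a horizontal. Let $F^{(c)}$ be the transform just before such a pair. Assuming (as in the discussion) that $F$ — hence every $F^{(c)}$ — is a binomial $F^{(c)}(y,z)=y^{r'}z^{s'}(c'y^{k'}+d'z^{l'})$ with $c',d'\in K^*$, and noting that $F^{(c)}$ precedes the jump at $b'$ and is therefore not a monomial, we have $\Dorder(F^{(c)})=\min(k',l')>0$; after exchanging the two axes if necessary, take $\Dorder(F^{(c)})=k'\le l'$. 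A direct chart computation, exactly the one drawn in figures \ref{2moves} and \ref{halbe}, locates the two relevant vertices of $N(F^{(c+2)})$ after the pair and gives
$$\Dorder(F^{(c+2)})=\min(2k'-l',\,l'-k')$$
whenever $N(F^{(c+2)})$ is not a quadrant — i.e.\ exactly when $k'<l'<2k'$ — and $\Dorder(F^{(c+2)})=0$ in the remaining cases, where $l'=k'$ or $l'\ge 2k'$ and $F$ has already become a monomial times a unit. Since $\min(a,b)\le\tfrac12(a+b)$, in all cases $\Dorder(F^{(c+2)})\le\tfrac12\Dorder(F^{(c)})$; the case where the adjacent pair is vertical-then-horizontal is symmetric.

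I would then chain these estimates using the non-increase $\Dorder(F^*)\le\Dorder(F)$ under both monomial moves $F^*(y,z)=F(yz,z)$ and $F^*(y,z)=F(y,yz)$, which was established earlier in this section. Writing $F^{(0)}$ for the expansion at $b^\circ$ and $F^{(u+v)}$ for that at $b$: if $\Dorder(F^{(c)})>\tfrac12\Dorder(F^{(0)})$ then
$$\Dorder(F^{(u+v)})\le\Dorder(F^{(c+2)})\le\tfrac12\Dorder(F^{(c)})\le\tfrac12\Dorder(F^{(0)}),$$
whereas if $\Dorder(F^{(c)})\le\tfrac12\Dorder(F^{(0)})$ then already $\Dorder(F^{(u+v)})\le\Dorder(F^{(c)})\le\tfrac12\Dorder(F^{(0)})$. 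Either way the \dorder at the antelope $b$ is at most half of its value at $b^\circ$, which is the claim. Finally, to discard the binomial simplification I would replace the two monomials of $F^{(c)}$ by the two vertices of $N(F^{(c)})$ governing $\Dorder(F^{(c)})$ — those through which the anti-diagonal line supporting $N(F^{(c)})$ from the corner passes, or their neighbours along it — and use that $N(F^{(c)})$ lies above the segment joining them together with the fact that horizontal and vertical moves act affine-linearly on exponent vectors while fixing the highest vertex of the Newton polygon, so that the same two vertices keep governing $\Dorder$ after each move and the binomial bound transfers verbatim.

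The hard part will be exactly this last reduction to an arbitrary $F$: one must check that throughout the successive shears no interior vertex of the Newton polygon ever overtakes the two governing vertices, so that $\Dorder$ is still computed from them after each step, and that the degenerate steps (where a transform turns into a monomial) only strengthen the estimate. A secondary, more routine point is to carry the whole argument in the coordinate-free \dorder, i.e.\ the minimum over flag-subordinate coordinate changes: this is handled as in Section \ref{Kap1}, since the highest vertex of $N(F)$, and with it $\Degreey$ and $\Orderz$, is insensitive to subordinate coordinate changes, so fixing the binomial's coordinates costs no loss of the minimum. One should also make sure an adjacent horizontal/vertical pair can genuinely be extracted from the monomial moves in $[b^\circ,b]$; if translational moves happen to be interspersed there, one restricts to a maximal consecutive block of monomial moves containing both types.
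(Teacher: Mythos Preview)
Your proposal is correct and follows essentially the same route as the paper: extract from the structure theorem that both exceptional multiplicities at the antelope are prime to $p$ so that at least one horizontal and one vertical move occur between $b^\circ$ and $b$, locate an adjacent pair of opposite-type monomial moves, do the binomial computation $\Dorder(F^{(c+2)})=\min(2k'-l',l'-k')\le k'/2$, and chain with non-increase of the \dorder\ under monomial moves. Your use of $\min(a,b)\le\tfrac12(a+b)$ with $a+b=k'$ is exactly the content of the paper's figure~\ref{halbe}, and your remarks on passing from the binomial to an arbitrary $F$ spell out what the paper compresses into ``it is not hard to see.''
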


\ind The increase at the kangaroo point by $1$ is therefore, except in the case that the \dorder \ at the point $b°$ is equal to $1$ or $2$, in the long run dominated by the decrease of the \dorder \ in the prior blowups. By $(\star)$, one immediately sees that in the first case no increase of the \dorder \ is possible. If the \dorder \ at the point $b°$ is equal to $2$, this is not possible either. This can be checked by an easy computation using the special shape of $F$ in this case. \\

%%%%%%%%%%%%%%%%%%%%%%%%%%%%%%%%%%%%%%%%%%%%%%%%%%%%
%%%%%%%%%%%%%%%%%%%%%%%%%%%%%%%%%%%%%%%%%%%%%%%%%%%%%

%%%%%%%%%%%%%%%%%%%%%%%%%%%%%%%%%%%%%%%%%%%%%%%%%%%%
%%%%%%%%%%%%%%%%%%%%%%%%%%%%%%%%%%%%%%%%%%%%%%%%%%%%%

\vspace{10pt}
\bibliographystyle{plain}
\bibliography{quingsreference.bib}

%%%%%%%%%%%%%%%%%%%%%%%%%%%%%%%%%%%%%%%%%%%%%%%%%%%%
%%%%%%%%%%%%%%%%%%%%%%%%%%%%%%%%%%%%%%%%%%%%%%%%%%%%%

\vspace{25pt}

\noindent \ind Faculty of Mathematics\\ 
University of Vienna, and \\
Institut f\"ur Mathematik\\ University of Innsbruck, Austria \\
\ind herwig.hauser@univie.ac.at \\
dominique.wagner@univie.ac.at

\end{document}